\newcommand{\Lim}[1]{\raisebox{0.6ex}{\scalebox{0.9}{$\displaystyle \lim_{#1}\;$}}}
\newcommand{\LimInf}[1]{\raisebox{0.6ex}{\scalebox{0.9}{$\displaystyle \liminf_{#1}\;$}}}
\newcommand{\Inf}[1]{\raisebox{0.6ex}{\scalebox{0.9}{$\displaystyle \inf_{#1}\;$}}}
\definecolor{citation}{rgb}{0.2,0.58,0.2}
\definecolor{formula}{rgb}{0.1,0.2,0.6}
\definecolor{url}{rgb}{0.3,0,0.5}
\numberwithin{equation}{section}
\newtheorem{thm}{Theorem}[section]
\newtheorem{lem}[thm]{Lemma}
\newtheorem{cor}[thm]{Corollary}
\theoremstyle{definition}
\theoremstyle{remark}
\newtheorem{remark}[thm]{Remark}
\numberwithin{equation}{section}
\begin{document}

\title[Fractional Schr\"{o}dinger system of Choquard type]{\bf \large \textsc{ON FRACTIONAL
SCHR\"{O}DINGER SYSTEMS OF CHOQUARD TYPE}}
\author[SANTOSH BHATTARAI]{ SANTOSH BHATTARAI }
\address{SANTOSH BHATTARAI}
\email{sbmath55@gmail.com}

\keywords{Fractional Choquard equation; coupled fractional Choquard type system; Coupled Hartree equations; standing waves; stability}
\subjclass[2010]{35R11, 35Q55, 35Q40, 35B35}


\maketitle

\begin{abstract}
\noindent In this article, we first employ the concentration compactness
techniques to prove existence and stability results of standing waves
for nonlinear fractional Schr\"{o}dinger-Choquard equation
\[
i\partial_t\Psi + (-\Delta)^{\alpha}\Psi = a |\Psi|^{s-2}\Psi+\lambda \left( \frac{1}{|x|^{N-\beta}} \star |\Psi|^p \right)|\Psi|^{p-2}\Psi\ \ \mathrm{in}\
\mathbb{R}^{N+1},
\]
where $N\geq 2$, $\alpha\in (0,1)$, $\beta\in (0, N)$, $s\in (2, 2+\frac{4\alpha}{N})$, $p\in [2, 1+\frac{2\alpha+\beta}{N})$, and the constants $a, \lambda$ are
nonnegative satisfying $a+\lambda > 0.$
We then extend the arguments to establish similar results for coupled standing waves
of nonlinear fractional Schr\"{o}dinger systems of Choquard type.
The same argument works for equations with an arbitrary number of combined nonlinearities
and when $|x|^{\beta-N}$ is replaced by a more general convolution potential $\mathcal{K}:\mathbb{R}^N\to [0, \infty)$ under certain assumptions.
The arguments can be applied and the results are identical for the case $\alpha=1$ as well.
\end{abstract}

\section{Introduction}\label{SEC1}
The fractional nonlinear Schr\"{o}dinger (fNLS) equation
\begin{equation}\label{fNLS}
i\Psi_t+(-\Delta)^\alpha \Psi=f(|\Psi|)\Psi\ \ \mathrm{in}\ \mathbb{R}^N\times (0, \infty)
\end{equation}
is a fundamental equation of the space-fractional quantum mechanics (SFQM).
Here the fractional Laplacian $(-\Delta)^\alpha$ of order $\alpha\in(0, 1)$ is defined as
\begin{equation}\label{fracLap}
(-\Delta)^\alpha u(x)=C_{N, \alpha}~\mathrm{P.V.}\int_{\mathbb{R}^N}\frac{u(x)-u(y) }{|x-y|^{N+2\alpha} }dy,\ \forall u\in \mathcal{S}(\mathbb{R}^N),
\end{equation}
where P.V. stands for the principle value of the integral.
The term SFQM provides a natural extension of the standard quantum mechanics when
the Brownian trajectories in the well-known Feynman path integrals are
replaced by the L\'{e}vy  flights
(see \cite{Las3}).
The fNLS equations with $\alpha=1/2$ have been also used as models to describe Boson-stars. Recently, an optical realization of the fractional Schr\"{o}dinger equation was proposed by Longhi \cite{Lon1}.

\smallskip

An important issue concerning nonlinear evolution
equations such as \eqref{fNLS}
is to study
their standing wave solutions.
A standing wave solution of \eqref{fNLS} is a solution of the form
$
\Psi(x,t)=e^{-i\omega t}u(x),
$
where $\omega\in \mathbb{R}$ and $u$ satisfies the stationary equation
\begin{equation}\label{fODE}
(-\Delta)^\alpha u + \omega u= f(|u|)u\ \ \mathrm{in}\ \mathbb{R}^N.
\end{equation}
Equation \eqref{fODE} with the space derivative of order $\alpha=1$ (the standard Schr\"{o}dinger equation)
and its variants have been extensively studied in the mathematical literature.
Recently, there has been growing interest in extending similar results in the case $0<\alpha<1.$
One way to obtain solutions of \eqref{fODE} in $H^\alpha(\mathbb{R}^N)$ is to look for
the critical points of the functional $J_\omega:H^\alpha(\mathbb{R}^N)\to \mathbb{R}$ given by
\[
J_\omega(u)=\frac{1}{2}\|(-\Delta)^{\alpha/2}u\|_{L^2}^2 +\frac{\omega}{2}\|u\|_{L^2}^2 - \int_{\mathbb{R}^N}F(u)\ dx,
\]
where $F^\prime(u)=f(|u|)u.$ That is, ground state standing waves are characterized as solutions to the minimizing problem
\[
\inf_{ u \in \mathcal{N}} J_\omega(u),\ \ \mathcal{N}:=\{ u\in H^\alpha(\mathbb{R}^N)\setminus \{0 \}: J_\omega^{\prime}(u)(u)=0 \}.
\]
In this approach, since the parameter $\omega$ is assumed to
be fixed, we can not have a priori knowledge
about the $L^2$-norm of the standing wave profile.
A natural question concerning \eqref{fODE} is thus the case of normalized solutions, i.e., solutions $u$ satisfying
$
\|u\|_{L^2}^2=\sigma
$
for given $\sigma>0.$
This paper is devoted to the study of such special solutions
to fNLS equations and their coupled systems.

\smallskip

We first consider
\eqref{fODE} with combined power and Hartree type nonlinearities
\begin{equation}\label{Spower}
(-\Delta)^\alpha u + \omega u= a |u|^{s-2}u+\lambda \mathcal{I}_\beta^{p}(u)|u|^{p-2}u,
\end{equation}
where $a, \lambda$ are nonnegative satisfying $a+\lambda\neq 0$ and for any $r,$ the
Riesz potential $\mathcal{I}_{\beta}^{r}(u)$ of
order $\beta\in (0, N)$ for $u$ is defined by
\begin{equation}\label{IrDef}
\mathcal{I}_\beta^r(u)(x)=\frac{1}{|x|^{N-\beta}}\star |u|^r=\int_{\mathbb{R}^N}\frac{|u(y)|^r}{|x-y|^{N-\beta}}\ dy,\ x\in \mathbb{R}^N.
\end{equation}
A natural way to obtain normalized solutions
to \eqref{Spower} in $H^\alpha(\mathbb{R}^N)$ is to describe them
as solutions of the minimization problem constrained to the $L^2$ sphere of radius $\sigma,$
\begin{equation}\label{MinP1}
\mathrm{minimize}\ J(u)\ \mathrm{subject\ to}\ \int_{\mathbb{R}^N}|u|^2\ dx=\sigma>0,\ u\in H^\alpha(\mathbb{R}^N),
\end{equation}
where the functional $J$ represents the energy and is given by
\begin{equation}\label{EdefG}
J(u)=\frac{1}{2}\|(-\Delta)^{\alpha/2}u\|_{L^2}^2 - \frac{a}{s}\int_{\mathbb{R}^N}|u|^s\ dx-\frac{\lambda}{2p}\int_{\mathbb{R}^N}\mathcal{I}_\beta^{p}(u) |u|^p\ dx.
\end{equation}
Parameter $\omega$ in this approach becomes the Lagrange multiplier of the constrained variational problem.
The first part of this paper is devoted to proving
the existence of minimizers for the problem \eqref{MinP1}.

\smallskip

In the specific case $N=3, a=0$, $\beta=2\alpha=p=2,$ equation \eqref{Spower} reduces to
\begin{equation}\label{Cho2eq}
-\Delta u+\omega u=\lambda \left(\frac{1}{|x|}\star |u|^2 \right)u\ \ \mathrm{in}\ \mathbb{R}^3,
\end{equation}
and is also
known as
the Choquard-Pekar equation. Equation \eqref{Cho2eq} is the form that appears as a model in
quantum theory of a polaron at rest (see \cite{Pekar}).
The time-dependent form of \eqref{Cho2eq}
also describes the self-gravitational collapse of a quantum
mechanical wave-function (see \cite{Pen2}), in which context
it is usually called the Schr\"{o}dinger-Newton (SN) equation. In the plasma physics context,
the stationary form of the SN equation is also known as Choquard equation (see \cite{Lieb1} for details).

\smallskip

In pure mathematics, there is a huge literature concerning
ground states for the Choquard and related equations. Among many others,
we mention the paper by
Ma Li and Zhao Lin \cite{MaLi} where the existence problem of ground states for \eqref{Spower} with $\alpha=1$ and $a=0$ was
formulated, and the radial symmetry as well as the regularity of solutions have been
proved (see also \cite{Mor1}). Recently, the
ground state solution of \eqref{Spower} with $a=0$ has been studied by
D'Avenia et al. in
\cite{Dav1}.
Surprisingly, only a very few papers address normalized solutions, although they are
the most relevant from the physical point of view.
In \cite{Lieb1}, Lieb proved
the existence and uniqueness of normalized solutions
to the Choquard equation using the symmetrization techniques. In their papers \cite{CLi, Lio}, Cazenave and Lions studied
the existence and stability issues of normalized solutions for the Choquard and related equations.

\smallskip

Another interesting question is whether similar results can be proved for coupled systems of
fNLS equations. In the past,
systems of standard NLS equations (in the case $\alpha=1$) have been
widely studied and a fairly
complete theory has been developed to study
standing wave solutions to such systems
(see \cite{ABnls}, for instance, concerning the results on normalized solutions).
No such theory yet exists, however, for the coupled systems of fNLS equations.
In this paper, we are also interested in generalizing existence and stability results to
the following coupled system of fNLS equations with Choquard type nonlinearities
\begin{equation}\label{CfODEp}
\left\{
\begin{aligned}
  & (-\Delta)^\alpha u_1+\omega_1u_1=\lambda_1\mathcal{I}_\beta^{p_1}(u_1) |u_1|^{p_1-2}u_1 +c \mathcal{I}_\beta^{q}(u_2)  |u_1|^{q-2}u_1,\\
   & (-\Delta)^\alpha u_2+\omega_2u_2= \lambda_2 \mathcal{I}_\beta^{p_2}(u_2)|u_2|^{p_2-2}u_2+c \mathcal{I}_\beta^{q}(u_1) |u_2|^{q-2}u_2,
\end{aligned}
\right.
\end{equation}
where $u_1, u_2:\mathbb{R}^N\to \mathbb{C}$, $\omega=(\omega_1, \omega_2)\in \mathbb{R}^2,$ the
constants $\lambda_1, \lambda_2,$ and $ c$ are
positive, and $\mathcal{I}_\beta^r(f)$ is as defined in \eqref{IrDef}.
As in the scalar case, we search for normalized solutions to \eqref{CfODEp}, i.e.,
solutions $(u_1, u_2)$ in $H^\alpha(\mathbb{R}^N)\times H^\alpha(\mathbb{R}^N)$ satisfying
$\|u_1\|_{L^2}^2=\sigma_1$ and $\|u_2\|_{L^2}^2=\sigma_2$ for given $\sigma_1>0$ and $\sigma_2>0.$
To obtain normalized solutions to \eqref{CfODEp}, we look for minimizers of the problem
\begin{equation}\label{MinP12}
\mathrm{minimize}\ E(u_1, u_2)\ \mathrm{subject\ to}\ \int_{\mathbb{R}^N}|u_j|^2\ dx=\sigma_j>0,\ j=1,2,
\end{equation}
where the associated energy functional $E$ is given by
\begin{equation}\label{EdefGC}
\begin{aligned}
E(u_1, u_2)= \frac{1}{2}\sum_{j=1}^2 \|(-\Delta)^{\alpha/2}u_j\|_{L^2}^2& -\sum_{j=1}^2 \frac{\lambda_j}{2p_j}\iint_{\mathbb{R}^N}\frac{|u_j(x)|^{p_j}|u_j(y)|^{p_j} }{|x-y|^{N-\beta} }\ dxdy\\
& - \frac{c}{q} \iint_{\mathbb{R}^N}\frac{|u_1(x)|^q|u_2(y)|^q}{|x-y|^{N-\beta}}\ dxdy.
\end{aligned}
\end{equation}
Parameters $\omega_1$ and $\omega_2$ in \eqref{CfODEp} appear as the Lagrange multipliers.
Apart from the existence result for minimizers of \eqref{MinP12}, we also
provide some results concerning structures of the set of minimizers.

\smallskip

All results established in Section~\ref{singleEq} below are easily extendable to versions of \eqref{Spower} with an
arbitrary number of combined nonlinearities and when $|x|^{\beta-N}$ is replaced by a more general
convolution potential $\mathcal{K}:\mathbb{R}^N\to [0, \infty)$, that is, the equation of the form
\begin{equation}\label{arbiNon}
(-\Delta)^\alpha u+\omega u = \sum_{j=1}^m a_j |u|^{s_j-2}u+\sum_{k=1}^d \lambda_k (\mathcal{K}\star |u|^{p_k})|u|^{p_k-2}u,
\end{equation}
where $N\geq 3,\ 0<\alpha<1$, the constants $a_j, \lambda_k$ are all
nonnegative but not all zero, and the potential $\mathcal{K}\in L_{\mathrm{w} }^r(\mathbb{R}^N)$ is radially
symmetric satisfying some assumptions (see Theorem~\ref{arbiTHM} below). Normalized
solutions to \eqref{arbiNon} in $H^\alpha(\mathbb{R}^N)$ are obtained as
minimizers of
the energy functional
\begin{equation}\label{tildDef}
\tilde{J}(u)=\frac{1}{2}\|(-\Delta)^{\alpha/2}u\|_{L^2}^2-\sum_{j=1}^mb_j\|u\|_{L^{s_j}}^{s_j}
-\sum_{k=1}^d\lambda_k\int_{\mathbb{R}^N}(\mathcal{K}\star |u|^{p_k})|u|^{p_k}\ dx
\end{equation}
satisfying the constraint $\|u\|_{L^2}^2=\sigma>0,$
where $b_j=a_j/s_j$ and $\mu_k=\lambda_k/2p_k.$

\smallskip

Our approach here involves in studying the global
minimization properties of stationary solutions via
the concentration-compactness principle
of P. L. Lions (see Lemma I.1 of \cite{Lio}).
The advantage of utilizing this technique in our context
is that this not only gives the existence of stationary solutions but
also addresses the important
stability issue of associated standing waves,
since the energy and the power(s) involved in variational
problems are conservation laws for
the flow of associated NLS-type evolution equations (see \cite{CLi,[A]} for the illustration of the
method).
To study the two-parameter problem \eqref{MinP12}, we follow the
techniques developed in a series of papers \cite{AAkdv, ABnls, [AB11]} where the concentration compactness principle
was used to study solitary waves for coupled Schr\"{o}dinger and KdV systems.
In order to establish relative compactness of energy minimizing sequences
(and hence, existence and stability of minimizers)
in the spirit of
concentration compactness technique, one require to check certain strict
inequalities involving the infimum of the minimization problem.
The proof of the strict inequality is a notable difficulty when one uses this technique and
most proofs of these strict inequalities in the
literature are based on some homogeneity-type property.
The proof is much less understood
for problems which violate homogeneity-type assumptions and for multi-constrained
variational problems. This might be one reason why there are only a very few papers concerning
the relative compactness of minimizing sequences, and hence existence and stability of normalized standing waves.

\smallskip

The paper is organized as follows. We analyze separately the scalar case and the coupled
one. In Section~\ref{singleEq}, we analyze the fNLS equation with combined local and
nonlocal nonlinearities via concentration compactness method.
Theorem~\ref{Mainpower}, Corollary~\ref{Maincor}, and Theorem~\ref{arbiTHM} are the main results of Section~\ref{singleEq}.
Section~\ref{coupledS} provides the analysis of the two-constraint problem for the coupled fNLS system with nonlocal nonlinearities.
Theorem~\ref{CHMain} and its Corollary are the existence and stability results for coupled standing waves.
To the author's knowledge, this is the first paper
which proves existence results of normalized solutions
to coupled Schr\"{o}dinger systems involving convolution type
interaction terms. As far as we know, there is also no paper which
employs
the concentration compactness technique (in the presence of strict inequality) to obtain
standing waves for nonlinear Schr\"{o}dinger equations with an arbitrary
number of combined nonlinearities.

\medskip

\textbf{Notation.} The ball with radius $R$ and center $x\in \mathbb{R}^N$ will be denoted by $B_R(x).$
For any $r\geq 1,$ we denote by $L^r(\mathbb{R}^N)$ the space of all complex-valued $r$-integrable functions $f$ with the norm
$\|f\|_{L^r}^r=\int_{\mathbb{R}^N}|f|^r\ dx.$ For any $1\leq r<\infty,$ the \textit{weak}-$L^r$ space $L_{\mathrm{w}}^r(\mathbb{R}^N)$ is the set of
all measurable functions $f:\mathbb{R}^N\to \mathbb{C}$ such that
\[
\|f\|_{L_{\mathrm{w}}^r}:=\sup_{M>0}M \left\vert \{x:|f(x)|>M \} \right\vert^{1/r}<\infty.
\]
We have the proper
inclusion $L^r(\mathbb{R}^N)\hookrightarrow L_{\mathrm{w}}^r(\mathbb{R}^N)$ (see \cite{Lieb} for details).
The fractional Laplacian $(-\Delta)^ \alpha, \alpha\in(0,1),$ defined in \eqref{fracLap}, can
be equivalently defined via Fourier transform as
\[
\widehat{(-\Delta)^\alpha u}(\xi)=|\xi|^{2 \alpha}\widehat{u}(\xi),\  \forall u\in \mathcal{S}(\mathbb{R}^N).
\]
We denote by $H^\alpha(\mathbb{R}^N), 0<\alpha<1,$ the fractional Sobolev space of all complex-valued functions
functions $u\in L^2(\mathbb{R}^N)$ with the norm
\[
\|u\|_{H^\alpha}^2=\|(-\Delta)^{\alpha/2}u\|_{L^2}^2+\|u\|_{L^2}^2,
\]
where up to a multiplicative constant
\[
\|(-\Delta)^{\alpha/2}u\|_{L^2}^2=\iint_{\mathbb{R}^N}\frac{|u(x)-u(y)|^2 } {|x- y|^{N+2\alpha} }dxdy,
\]
also known as Gagliardo seminorm. Throughout the paper, we denote by $Y^\alpha(\mathbb{R}^N)$ the product space
$Y^\alpha(\mathbb{R}^N)=H^\alpha(\mathbb{R}^N)\times H^\alpha(\mathbb{R}^N)$ and its norm by
\[
\|(u,v)\|_{Y^\alpha}^2=\|u\|_{H^\alpha}^2+\|v\|_{H^\alpha}^2.
\]
We do not develop
the elements of the theory of fractional Sobolev spaces here, but use a
number of fractional Sobolev type inequalities throughout the paper (for a detailed
account of fractional Sobolev spaces, the reader may consult \cite{NPV}). The same symbol $C$ will frequently be
used to denote different constants in the same
string of inequalities whose exact value is not important for our analysis.

\section{Fractional Schr\"{o}dinger-Choquard equation }\label{singleEq}
In this section we study existence and orbital stability issues of standing waves for fNLS equation with combined power and Choquard type nonlinearities
\begin{equation}\label{NSpower}
i\Psi_t+(-\Delta)^\alpha \Psi = a |\Psi|^{s-2}\Psi+\lambda \mathcal{I}_\beta^{p}(\Psi)|\Psi|^{p-2}\Psi,\ (x,t)\in \mathbb{R}^N\times (0, \infty),
\end{equation}
where $N\geq 2$ and $a, \lambda$ are nonnegative constants satisfying $a+\lambda \neq 0.$
Throughout this section, we assume
that the
following conditions hold:
\begin{equation}\label{AssPo}
N\geq 2, \beta\in (0,N), 0<\alpha<1,\ 2<s<2+\frac{4\alpha}{N},\ 2\leq p< \dfrac{N+2\alpha+\beta}{N}.
\end{equation}

\smallskip

We first provide the statement of our main results.
For any $\sigma>0,$
we denote by $\Sigma_\sigma$ the set of fixed power
$\Sigma_\sigma=\{u \in H^\alpha(\mathbb{R}^N):\|u \|_{L^2}^2=\sigma\}$ and define
\[
J(\sigma)=\inf_{f\in \Sigma_\sigma}J(f).
\]
The following theorem and its corollary are the main results of this section:
\begin{thm}\label{Mainpower}
Suppose that \eqref{AssPo} holds.
For every $\sigma>0,$ let $P_\sigma$ denotes the set of standing wave profiles of \eqref{NSpower}, that is,
\[
P_\sigma=\left\{u\in H^\alpha(\mathbb{R}^N):u\in \Sigma_\sigma\ \mathrm{and}\ J(u)=J(\sigma) \right\}.
\]
Then, the set
$P_\sigma$ is nonempty. If $u\in P_\sigma,$
then $|u|\in P_\sigma$ and $|u|>0$ on $\mathbb{R}^N.$ Moreover, $|u|^\ast\in P_\sigma$ whenever $u\in P_\sigma$, where $f^\ast$ represents
the symmetric decreasing rearrangement of $f.$

\smallskip

Furthermore, if $f_n\in H^\alpha(\mathbb{R}^N )$,
$\|f_n\|_{L^2}^2\to \sigma,$ and $J(f_n)\to J(\sigma),$ then
\begin{itemize}
\item[$1.$] the sequence $\{f_n\}_{n\geq 1}$ is relatively compact in $H^\alpha(\mathbb{R}^N)$ up to a translation, i.e, there exists a subsequence which we still denote by the same, a sequence $\{y_1, y_2, \ldots\}\subset \mathbb{R}^N,$ and a function $u\in H^\alpha(\mathbb{R}^N)$ such that $f_n(\cdot+y_n)\to u$ strongly in $H^\alpha(\mathbb{R}^N)$ and $u\in P_\sigma.$
\item[$2.$] The following holds:
\[
\lim_{n\to \infty}\inf_{u\in P_\sigma}\inf_{y\in \mathbb{R}^N}\|f_n(\cdot+y)-u\|_{H^\alpha} =0.
\]
\item[$3.$] $f_n\to P_\sigma$ in the following sense,
\[
\lim_{n\to \infty}\inf_{u\in P_\sigma}\|f_n-u\|_{H^\alpha}=0.
\]
\end{itemize}
\end{thm}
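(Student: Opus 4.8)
The plan is to apply P.\,L.\,Lions' concentration-compactness principle to an arbitrary minimizing sequence $\{f_n\}$ for $J(\sigma)$, showing that the only surviving alternative is compactness (up to translation), which simultaneously yields $P_\sigma\neq\emptyset$ and the three convergence statements. First I would record the necessary preliminary facts: (i) the functional $J$ is well-defined and bounded below on $\Sigma_\sigma$ — this uses the fractional Gagliardo--Nirenberg inequality to control $\|u\|_{L^s}^s$ and the Hardy--Littlewood--Sobolev inequality together with Gagliardo--Nirenberg to control the Choquard term $\int \mathcal I_\beta^p(u)|u|^p$, and the subcriticality conditions $s<2+\tfrac{4\alpha}{N}$ and $p<\tfrac{N+2\alpha+\beta}{N}$ guarantee that the kinetic energy dominates, so $J(\sigma)>-\infty$; (ii) any minimizing sequence is bounded in $H^\alpha(\mathbb R^N)$, again because $\|(-\Delta)^{\alpha/2}f_n\|_{L^2}^2$ cannot blow up without driving $J(f_n)\to+\infty$; (iii) the strict subadditivity inequality
\[
J(\sigma) < J(\theta) + J(\sigma-\theta)\qquad\text{for all }0<\theta<\sigma.
\]
Establishing (iii) is the crux and I expect it to be the main obstacle. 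The standard route is to first prove the scaling/homogeneity relation that $J(\sigma)<0$ for every $\sigma>0$ (test $J$ on a fixed profile rescaled to have $L^2$-mass $\sigma$; since $a+\lambda>0$ at least one nonlinear term is present and strictly negative, and the subcritical exponents make it beat the kinetic term for a suitable dilation), and then to derive the strict superadditivity of $\sigma\mapsto -J(\sigma)$. The delicate point here, flagged in the introduction, is the lack of a single homogeneity exponent because of the \emph{combined} power and Choquard nonlinearities; I would handle this by proving the substitution property $J(\mu\sigma)<\mu^{\,?}J(\sigma)$ is replaced by a direct argument: for $\theta\in(0,\sigma)$ pick near-minimizers $u_\theta,u_{\sigma-\theta}$, rescale them spatially to a common small parameter so that the mass constraints are preserved while the interaction/cross terms between the pieces are made negligible, and use $J(\sigma)\le J(u_\theta)+J(u_{\sigma-\theta})+o(1)$ combined with the fact that a rescaling strictly lowers the energy of at least one piece because $J(\cdot)<0$.

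Next I would run the concentration-compactness dichotomy on the mass densities $\rho_n=|f_n|^2$ (after normalizing $\|f_n\|_{L^2}^2$ exactly to $\sigma$, which costs only an $o(1)$ adjustment by scaling). Vanishing is excluded: if $\sup_{y}\int_{B_R(y)}|f_n|^2\to 0$ then a fractional version of the Lions vanishing lemma forces $\|f_n\|_{L^s}\to 0$ and, via Hardy--Littlewood--Sobolev, the Choquard term to vanish as well, so $\liminf J(f_n)\ge 0>J(\sigma)$, a contradiction. Dichotomy is excluded by the strict subadditivity (iii): splitting $f_n$ via cut-offs into two pieces of masses $\theta$ and $\sigma-\theta$ supported far apart, the almost-locality of $(-\Delta)^{\alpha/2}$ in the Gagliardo form and the decay of the Riesz kernel give $J(f_n)\ge J(\theta)+J(\sigma-\theta)+o(1)\ge J(\sigma)+\delta+o(1)$ for some $\delta>0$, again impossible. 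Here I expect a minor technical nuisance: the fractional gradient does not split cleanly under multiplication by a cutoff (unlike the local case), so I would need a commutator/localization estimate for $(-\Delta)^{\alpha/2}$ — this is by now standard (see the fractional Sobolev references cited) but must be invoked carefully.

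Hence compactness holds: there exist $y_n\in\mathbb R^N$ such that $f_n(\cdot+y_n)\rightharpoonup u$ weakly in $H^\alpha$ with $\|u\|_{L^2}^2=\sigma$, whence the convergence is strong in $L^2$, and by the subcritical embeddings (compact on balls, plus tightness from concentration-compactness) also strong in $L^s$ and in the Choquard functional; weak lower semicontinuity of the kinetic term then forces $J(u)\le\liminf J(f_n(\cdot+y_n))=J(\sigma)$, so $u\in P_\sigma$ and moreover $\|(-\Delta)^{\alpha/2}f_n(\cdot+y_n)\|_{L^2}\to\|(-\Delta)^{\alpha/2}u\|_{L^2}$, upgrading to strong convergence in $H^\alpha$. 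This proves item~1; items~2 and~3 follow from item~1 by a routine contradiction argument (if some subsequence stayed a fixed distance from $P_\sigma$ in the stated metrics, apply item~1 to that subsequence to reach a contradiction). Finally, for the qualitative statements about a single minimizer $u\in P_\sigma$: the inequality $\|(-\Delta)^{\alpha/2}|u|\,\|_{L^2}\le\|(-\Delta)^{\alpha/2}u\|_{L^2}$ (the fractional analogue of the diamagnetic inequality, valid via the Gagliardo form) together with $\||u|\|_{L^2}=\|u\|_{L^2}$ and invariance of the nonlinear terms under $u\mapsto|u|$ gives $J(|u|)\le J(u)=J(\sigma)$, so $|u|\in P_\sigma$; then $|u|$ is a nonnegative solution of the Euler--Lagrange equation \eqref{Spower} with a Lagrange multiplier $\omega$, and a strong maximum principle for $(-\Delta)^\alpha+\omega$ (using the nonlocal nature of the operator, any zero of a nonnegative supersolution propagates globally) yields $|u|>0$ on $\mathbb R^N$. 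The rearrangement claim is analogous: $\|(-\Delta)^{\alpha/2}f^\ast\|_{L^2}\le\|(-\Delta)^{\alpha/2}f\|_{L^2}$ by the Pólya--Szeg\H{o} inequality for the fractional seminorm, the $L^2$- and $L^s$-norms are preserved, and the Riesz rearrangement inequality shows the Choquard term does not decrease under symmetric decreasing rearrangement, so $J(|u|^\ast)\le J(|u|)=J(\sigma)$ and $|u|^\ast\in P_\sigma$.
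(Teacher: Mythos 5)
Your overall architecture (bounded-below energy, bounded minimizing sequences, exclusion of vanishing via the fractional Lions lemma, exclusion of dichotomy via cutoffs plus a commutator estimate for $(-\Delta)^{\alpha/2}$, compactness, then the qualitative facts about $|u|$, $|u|^\ast$ and positivity) coincides with the paper's, and those parts are fine at the level of a sketch. However, at the step you yourself identify as the crux --- the strict subadditivity $J(\sigma)<J(\theta)+J(\sigma-\theta)$ --- your proposal has a genuine gap: no working mechanism for the \emph{strictness} is given. The two-bump construction you describe (place near-minimizers for $J(\theta)$ and $J(\sigma-\theta)$ far apart, so that cross terms are ``negligible'') only yields the non-strict inequality $J(\sigma)\leq J(\theta)+J(\sigma-\theta)$; if you instead try to keep the attractive cross interaction as the source of strictness, you run into a quantifier problem, because the gain from the interaction depends on the chosen near-minimizers and their separation, while the error $J(u_\theta)-J(\theta)$ must be fixed beforehand, so the gain cannot be guaranteed to beat the error. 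The phrase ``a rescaling strictly lowers the energy of at least one piece because $J(\cdot)<0$'' is not a proof: the whole difficulty, flagged in the paper's introduction, is that the combined power and Choquard terms scale with different exponents, so no single super-homogeneity relation $J(c\rho)<cJ(\rho)$ is immediate.

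What is actually needed (and what the paper supplies) is a quantitative version of super-homogeneity along minimizing sequences: Lemma~\ref{PoLem1} shows that along any minimizing sequence the relevant nonlinear term ($\mathbb{D}_p(f_n)$, or $\|f_n\|_{L^s}^s$ when $\lambda=0$) stays bounded away from zero, because otherwise $J(\sigma)\geq 0$, contradicting $J(\sigma)<0$; this yields $J(T^{1/2}f_n)\leq TJ(f_n)-\delta$ for $T>1$ and $n$ large. The paper then proves strict subadditivity in Lemma~\ref{singleDi} by comparing the limiting energies per unit mass $K_1,K_2$ of minimizing sequences for $J(\sigma^\prime)$ and $J(\sigma^{\prime\prime})$: if $K_1\neq K_2$ one rescales the sequence with smaller energy per mass to carry the whole mass and gains $\sigma^{\prime\prime}|K_2-K_1|$, and if $K_1=K_2$ the uniform $\delta$ from Lemma~\ref{PoLem1} provides the strict gain. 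Your proposal never isolates this nonvanishing property nor any substitute for it, so as written the dichotomy case is not actually excluded. (A minor remark: for the positivity of $|u|$ the paper argues via the explicit positivity of the Bessel-type kernel of $((-\Delta)^\alpha+\omega)^{-1}$ rather than invoking a strong maximum principle; your route is essentially equivalent, but you should also justify $\omega>0$, which the paper gets from the Euler--Lagrange identity together with $J(u)<0$.)
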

 A straightforward consequence is the following result.
\begin{cor}\label{Maincor}
For every $\sigma>0,$
the solution set $P_\sigma$ is stable in the following sense: for any $\varepsilon>0,$ there exists $\delta>0$ such that if $u_0\in H^\alpha(\mathbb{R}^N),$
$u\in P_\sigma,$ and $\|u_0-u\|_{H^\alpha}<\delta,$ then the solution $\Psi(x,t)$ of \eqref{NSpower} with $\Psi(x,0)=u_0(x)$ satisfies
\[
\inf_{v\in P_\sigma}\|\Psi(\cdot, t)-v(x)\|_{H^\alpha}<\varepsilon,\ \forall t\geq 0.
\]
\end{cor}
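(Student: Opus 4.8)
The plan is to deduce the corollary from part~3 of Theorem~\ref{Mainpower} by the standard contradiction argument built on the conservation laws of \eqref{NSpower}. First I would record the two facts about the flow that are needed. Under \eqref{AssPo} the Cauchy problem for \eqref{NSpower} is globally well posed in $H^\alpha(\mathbb{R}^N)$: the exponents $s<2+\frac{4\alpha}{N}$ and $p<1+\frac{2\alpha+\beta}{N}$ are precisely the $L^2$-subcritical thresholds, so the local theory in $H^\alpha$ together with a priori bounds extracted from the conserved quantities extends solutions to all $t\ge 0$; and along any such solution $\Psi$ both the mass $\|\Psi(\cdot,t)\|_{L^2}^2$ and the energy $J(\Psi(\cdot,t))$ are independent of $t$. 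I would also note that $J$ is continuous on $H^\alpha(\mathbb{R}^N)$, in fact Lipschitz on bounded subsets, which is immediate from the fractional Sobolev embeddings and the Hardy--Littlewood--Sobolev inequality applied to the Choquard term, and that $P_\sigma$ is bounded in $H^\alpha(\mathbb{R}^N)$, a byproduct of the coercivity used in the proof of Theorem~\ref{Mainpower}.

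Next I would argue by contradiction, supposing that stability fails: there exist $\varepsilon_0>0$, initial data $u_{0,n}\in H^\alpha(\mathbb{R}^N)$, profiles $v_n\in P_\sigma$ with $\|u_{0,n}-v_n\|_{H^\alpha}\to 0$, and times $t_n\ge 0$ such that the solutions $\Psi_n$ of \eqref{NSpower} with $\Psi_n(\cdot,0)=u_{0,n}$ satisfy $\inf_{v\in P_\sigma}\|\Psi_n(\cdot,t_n)-v\|_{H^\alpha}\ge\varepsilon_0$ for every $n$. Setting $f_n:=\Psi_n(\cdot,t_n)$, I would then verify that $\{f_n\}$ is a minimizing sequence for $J(\sigma)$. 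Conservation of mass gives $\|f_n\|_{L^2}^2=\|u_{0,n}\|_{L^2}^2$; since $\|v_n\|_{L^2}^2=\sigma$ and $u_{0,n}\to v_n$ in $H^\alpha$, hence in $L^2$, this quantity tends to $\sigma$. Conservation of energy gives $J(f_n)=J(u_{0,n})$; since $\{v_n\}\subset P_\sigma$ is bounded in $H^\alpha(\mathbb{R}^N)$ so is $\{u_{0,n}\}$, and the local Lipschitz continuity of $J$ together with $J(v_n)=J(\sigma)$ forces $J(u_{0,n})\to J(\sigma)$. Therefore $\|f_n\|_{L^2}^2\to\sigma$ and $J(f_n)\to J(\sigma)$.

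At this stage part~3 of Theorem~\ref{Mainpower} applies to the sequence $\{f_n\}$ and yields $\lim_{n\to\infty}\inf_{v\in P_\sigma}\|f_n-v\|_{H^\alpha}=0$, which directly contradicts $\inf_{v\in P_\sigma}\|f_n-v\|_{H^\alpha}\ge\varepsilon_0$. This contradiction proves the corollary. Unwinding the quantifiers, the argument in fact shows the $\delta$--$\varepsilon$ formulation stated: for each $\varepsilon>0$ a suitable $\delta>0$ exists, since otherwise one produces, taking $\delta=1/n$, exactly the bad sequence above.

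The only substantive ingredient that lies outside Theorem~\ref{Mainpower} is the well-posedness input — the $H^\alpha$ local Cauchy theory for the fractional Schr\"odinger equation with combined power and Hartree-type nonlinearity and its globalization from the conservation of mass and energy under the subcritical constraint \eqref{AssPo}. I expect this to be the main point requiring care, or at least the place where one must invoke (or adapt) existing results; the remainder is the routine contradiction scheme together with the continuity and boundedness remarks above.
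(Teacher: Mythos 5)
Your proposal is correct and follows essentially the same route as the paper: the standard contradiction argument combining conservation of mass and energy, continuity of $J$ and boundedness of $P_\sigma$, and the compactness statement of Theorem~\ref{Mainpower} for minimizing sequences. The only (harmless) difference is that the paper rescales $u_n(\cdot,t_n)$ by factors $\zeta_n\to 1$ to land exactly on $\Sigma_\sigma$ before invoking the theorem, whereas you apply part~3 directly to $\Psi_n(\cdot,t_n)$, which is legitimate since the theorem only requires $\|f_n\|_{L^2}^2\to\sigma$; your explicit flagging of the well-posedness/conservation input matches the paper's implicit assumption.
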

In order to obtain analogue results for the equation \eqref{arbiNon} involving an arbitrary
number of combined power and Hartree type nonlinearities, we require that
the powers $s_j$ and $p_k$ satisfy
\begin{equation}\label{assCom}
2<s_j<2+\frac{4 \alpha}{N},\ 1 \leq j \leq m,\  \mathrm{and}\  2\leq p_k<\frac{2r(\alpha+N)-N}{Nr},\ 1\leq k\leq d.
\end{equation}
Furthermore, we require that the convolution potential $\mathcal{K}(x)$ satisfies the following assumptions
\begin{itemize}
\item[(H1)] $\mathcal{K}:\mathbb{R}^N\to [0, \infty)$ is radially symmetric, i.e., $\mathcal{K}(x)=\mathcal{K}(|x|),$ satisfying
\[
\lim_{r\to \infty}\mathcal{K}(r)=0\ \ \mathrm{and}\ \mathcal{K}\in L_{\mathrm{w}}^r(\mathbb{R}^N)\ \ \mathrm{for}\ r>\frac{N}{2N+2\alpha-Np_k},\ \forall 1\leq k\leq d.
\]
\item[(H2)] $\mathcal{K}$ satisfies the following condition
\[
\forall \theta>1,\ \mathcal{K}\left(\theta \xi\right)\geq \left( \frac{1}{\theta}\right)^\Gamma \mathcal{K}(\xi)\ \  \mathrm{for\ some}\ \Gamma>0.
\]
\item[(H3)] The number $\Gamma$ satisfies $\Gamma<2\alpha+N(2-p_k)$ for all $1\leq k\leq d.$

\end{itemize}

Analogue existence result for \eqref{arbiNon} is the following:
\begin{thm}\label{arbiTHM}
Suppose that $\mathcal{K}$ satisfy the assumptions (H1), (H2), (H3), and the powers $s_j$ and $p_k$ satisfy the conditions \eqref{assCom}.
For any $\sigma>0,$ define
\begin{equation}\label{Var3}
\tilde{J}(\sigma)=\Inf{f\in \Sigma_{\sigma}}\tilde{J}(f)\ \ \mathrm{and}\ \widetilde{P}_\sigma=\left\{u\in H^\alpha(\mathbb{R}^N): u\in \Sigma_\sigma, \tilde{J}(u)=\tilde{J}(\sigma) \right\},
\end{equation}
where $\Sigma_\sigma$ and $\tilde{J}(f)$ are as defined in Section~\ref{SEC1}.
Then, the solution set
$\widetilde{P}_\sigma$ for the problem $\tilde{J}(\sigma)$ is nonempty and the compactness
property is also enjoyed by its minimizing sequences. Each function $u\in \widetilde{P}_\sigma$ solves
the equation \eqref{arbiNon} for some $\omega>0.$
Furthermore, if $u\in \widetilde{P}_\sigma$, then $|u|\in \widetilde{P}_\sigma$ and $|u|>0.$
\end{thm}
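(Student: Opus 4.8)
The plan is to run, for the functional $\tilde J$ and the constraint set $\Sigma_\sigma$, exactly the concentration--compactness scheme already used to prove Theorem~\ref{Mainpower}, and to check that every ingredient survives the passage from the single power/Hartree pair to the finite sums in \eqref{arbiNon} and from $|x|^{\beta-N}$ to the weak-$L^r$ potential $\mathcal{K}$. First I would record the a priori estimates. For the local terms the fractional Gagliardo--Nirenberg inequality gives $\|u\|_{L^{s_j}}^{s_j}\le C\|(-\Delta)^{\alpha/2}u\|_{L^2}^{N(s_j-2)/2\alpha}\|u\|_{L^2}^{s_j-N(s_j-2)/2\alpha}$, with kinetic exponent strictly below $2$ since $s_j<2+4\alpha/N$. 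For the nonlocal terms, the Hardy--Littlewood--Sobolev inequality for $\mathcal{K}\in L^r_{\mathrm{w}}(\mathbb{R}^N)$ combined with Gagliardo--Nirenberg yields $\int_{\mathbb{R}^N}(\mathcal{K}\star|u|^{p_k})|u|^{p_k}\,dx\le C\|\mathcal{K}\|_{L^r_{\mathrm{w}}}\|(-\Delta)^{\alpha/2}u\|_{L^2}^{\delta_k}\|u\|_{L^2}^{2p_k-\delta_k}$, where the exponent $\delta_k$ is strictly below $2$ precisely by the upper bound on $p_k$ in \eqref{assCom} (equivalently, by the lower bound on $r$ in (H1)). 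Together with $\|u\|_{L^2}^2=\sigma$, these show that $\tilde J$ is well defined, $\tilde J(\sigma)>-\infty$, and every minimizing sequence $\{f_n\}$ is bounded in $H^\alpha(\mathbb{R}^N)$.

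The heart of the argument is a pair of scaling estimates. With the $L^2$-preserving dilation $u_\tau(x)=\tau^{N/2}u(\tau x)\in\Sigma_\sigma$ one has $\|(-\Delta)^{\alpha/2}u_\tau\|_{L^2}^2=\tau^{2\alpha}\|(-\Delta)^{\alpha/2}u\|_{L^2}^2$, $\|u_\tau\|_{L^{s_j}}^{s_j}=\tau^{N(s_j-2)/2}\|u\|_{L^{s_j}}^{s_j}$, and $\int(\mathcal{K}\star|u_\tau|^{p_k})|u_\tau|^{p_k}=\tau^{N(p_k-2)}\iint\mathcal{K}\big((x-y)/\tau\big)|u(x)|^{p_k}|u(y)|^{p_k}\,dx\,dy$; for $\tau<1$, (H2) with $\theta=1/\tau$ bounds $\mathcal{K}((x-y)/\tau)\ge\tau^{\Gamma}\mathcal{K}(x-y)$, so the $k$-th nonlocal term is $\ge\tau^{N(p_k-2)+\Gamma}\int(\mathcal{K}\star|u|^{p_k})|u|^{p_k}$. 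Since $s_j<2+4\alpha/N$ gives $N(s_j-2)/2<2\alpha$ and (H3) gives $N(p_k-2)+\Gamma<2\alpha$, whichever nonlinear term of $\tilde J$ is active dominates the kinetic contribution $\tfrac12\tau^{2\alpha}\|(-\Delta)^{\alpha/2}u\|_{L^2}^2$ as $\tau\to0^{+}$; so, fixing $u$ for which that term is positive (possible since the nonlinearity is nontrivial), we get $\tilde J(u_\tau)<0$ for small $\tau$, hence $\tilde J(\sigma)<0$ for every $\sigma>0$. For the second estimate, take $\theta>1$ and a near-minimizer $u\in\Sigma_\sigma$; then $\sqrt{\theta}\,u\in\Sigma_{\theta\sigma}$, and since $s_j/2>1$ and $p_k\ge2>1$, $\tilde J(\sqrt{\theta}\,u)$ equals $\theta\,\tilde J(u)$ minus a strictly positive multiple of the nonlinear energy of $u$, which is itself positive because $\tilde J(u)<0$. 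This yields the strict sub-homogeneity $\tilde J(\theta\sigma)<\theta\,\tilde J(\sigma)$ for $\theta>1$, hence, in the standard way, the strict subadditivity $\tilde J(\sigma)<\tilde J(\theta)+\tilde J(\sigma-\theta)$ for $0<\theta<\sigma$, together with the continuity of $\theta\mapsto\tilde J(\theta)$.

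With strict subadditivity available I would apply Lions' concentration--compactness lemma to $\rho_n=|f_n|^2$ as in Theorem~\ref{Mainpower}. Vanishing is excluded since the fractional vanishing lemma forces $f_n\to0$ in $L^q(\mathbb{R}^N)$ for all $q\in(2,2N/(N-2\alpha))$, whence---splitting $\mathcal{K}=\mathcal{K}\mathbbm{1}_{B_R}+\mathcal{K}\mathbbm{1}_{B_R^{c}}$ and using $\mathcal{K}(r)\to0$ from (H1) on the tail---every nonlinear term tends to $0$, contradicting $\tilde J(\sigma)<0$. Dichotomy is excluded because, using $\mathcal{K}\ge0$ and its decay to render the cross interaction terms $o(1)$ (together with the standard fact that the Gagliardo cross term between two far-apart pieces is $o(1)$), the energy splits as $\tilde J(f_n)\ge\tilde J(g_n^{(1)})+\tilde J(g_n^{(2)})-o(1)$ with $\|g_n^{(1)}\|_{L^2}^2\to\theta\in(0,\sigma)$ and $\|g_n^{(2)}\|_{L^2}^2\to\sigma-\theta$, which in the limit contradicts strict subadditivity. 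Hence compactness up to translation holds; weak lower semicontinuity of the Gagliardo seminorm, conservation of $L^2$ mass, and convergence of the nonlinear terms then upgrade it to strong $H^\alpha$ convergence of some $f_n(\cdot+y_n)$ to a $u\in\widetilde{P}_\sigma$, so $\widetilde{P}_\sigma\ne\emptyset$ and its minimizing sequences are precompact up to translation. Each $u\in\widetilde{P}_\sigma$ is a constrained critical point, hence solves \eqref{arbiNon} for a Lagrange multiplier $\omega$; pairing that equation with $u$ and using $s_j>2$ and $2p_k\ge4$ to bound each nonlinear term below by twice the corresponding term of $\tilde J$, one obtains $\omega\sigma\ge-2\tilde J(\sigma)>0$, so $\omega>0$. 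Finally, $\big||u(x)|-|u(y)|\big|\le|u(x)-u(y)|$ gives $\|(-\Delta)^{\alpha/2}|u|\|_{L^2}\le\|(-\Delta)^{\alpha/2}u\|_{L^2}$ while the nonlinearities depend only on $|u|$, so $|u|\in\widetilde{P}_\sigma$; applying the strong maximum principle for $(-\Delta)^\alpha+\omega$ to the nonnegative solution $|u|$ of \eqref{arbiNon} yields $|u|>0$ on $\mathbb{R}^N$.

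I expect the main obstacle to be the two scaling estimates of the second paragraph: pinning down precisely why (H2)--(H3) force the Choquard-type terms to beat the kinetic term under the $L^2$-preserving dilation (so that $\tilde J(\sigma)<0$), and why the purely algebraic inequalities $s_j/2>1$, $p_k\ge2$ alone suffice for strict sub-homogeneity even though $\mathcal{K}$ need not be homogeneous. Once these strict inequalities are secured, the concentration--compactness bookkeeping, the identification of the Lagrange multiplier, and the maximum-principle step go through essentially verbatim as in the proof of Theorem~\ref{Mainpower}.
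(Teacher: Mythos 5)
Your proposal is correct, and its overall architecture (a priori bounds via weak Young/Gagliardo--Nirenberg, exclusion of vanishing and dichotomy, upgrade to strong $H^\alpha$ convergence, Lagrange multiplier sign, $|u|$ and positivity) is the same concentration--compactness scheme the paper runs, by declaring that the proof of Theorem~\ref{Mainpower} carries over with the changes indicated in its remarks. The one place where you take a genuinely different route is the strict inequality. The paper deliberately avoids proving sub-homogeneity of $\sigma\mapsto\tilde J(\sigma)$: inside the dichotomy case (Lemma~\ref{singleDi}, flagged in the remark there as applying verbatim to $\tilde J$) it compares the per-unit-mass energies $K_1,K_2$ of two minimizing sequences and rescales only the amplitude of one of them, using Lemma~\ref{PoLem1} ($J(T^{1/2}f_n)\le TJ(f_n)-\delta$) to break the tie when $K_1=K_2$. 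You instead prove the classical Lions-type statement $\tilde J(\theta\sigma)<\theta\tilde J(\sigma)$ for $\theta>1$ directly, by applying the same amplitude scaling $u\mapsto\sqrt{\theta}\,u$ to a minimizing sequence of $\tilde J(\sigma)$ and using $\tilde J(\sigma)<0$ to get a uniform positive lower bound on the total nonlinear energy, then deduce strict subadditivity by the standard algebra. Both arguments rest on the same mechanism (amplitude scaling never touches $\mathcal{K}$, so no homogeneity of the kernel is needed, only $s_j/2>1$, $p_k>1$, and the nonvanishing of the nonlinear term along minimizers), so your route is legitimate and arguably more economical for the single-constraint problem; the paper's $K_1/K_2$ bookkeeping is chosen because it extends to the two-constraint system of Section~\ref{coupledS}, where plain sub-homogeneity in the masses is not available. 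Two further points in your favor: you make explicit where (H2)--(H3) are actually used, namely in the dilation computation giving $\tilde J(\sigma)<0$ (the paper only asserts this step ``goes through unchanged,'' although for a non-homogeneous $\mathcal{K}$ it is exactly the estimate $\mathcal{K}((x-y)/\tau)\ge\tau^{\Gamma}\mathcal{K}(x-y)$ with $N(p_k-2)+\Gamma<2\alpha$ that makes it work); and your splitting of $\mathcal{K}$ in the vanishing step is harmless but unnecessary, since the weak Young inequality already gives $\int(\mathcal{K}\star|f_n|^{p_k})|f_n|^{p_k}\le C\|f_n\|_{L^{2p_kr/(2r-1)}}^{2p_k}\to 0$ with the exponent strictly between $2$ and $2N/(N-2\alpha)$. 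The only steps you compress are the kinetic-energy splitting under cutoffs (which requires the commutator estimate of Lemma~\ref{CEst}, not just ``far-apart pieces'') and the positivity of $|u|$ (the paper uses the positivity of the Bessel-type resolvent kernel rather than a maximum principle, an equivalent route); both are inherited from the proof of Theorem~\ref{Mainpower}, consistent with how you framed the argument.
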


The proof of Theorem~\ref{arbiTHM} is quite similar to the proof of Theorem~\ref{Mainpower}
and we do not provide all details of it. We only indicate the parts which require changing.

\smallskip

For the reader's convenience, we first recall some well-known results from the fractional Sobolev spaces.
The following lemma is the fractional
Sobolev embedding.
\begin{lem}\label{sobInq}
Let $0<\alpha<1$ be such that $N>2\alpha.$ Then there exists a constant $C=C(N,\alpha)>0$ such that
\begin{equation}\label{SobInq}
\|u\|_{L^{2N/(N-2\alpha)}}^2\leq C \|(-\Delta)^{\alpha/2}u\|_{L^2}^2,\  \forall u\in H^\alpha(\mathbb{R}^N),
\end{equation}
Thus, $H^\alpha (\mathbb{R}^N)$ is continuously embedded into
$L^r(\mathbb{R}^N)$
for any $2\leq r\leq \frac{2N}{N-2\alpha}$ and compactly embedded into $L_{\textrm{loc}}^r(\mathbb{R}^N)$
for every $2\leq r<\frac{2N}{N-2\alpha}.$
\end{lem}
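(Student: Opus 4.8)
The plan is to obtain the inequality \eqref{SobInq} from the Hardy--Littlewood--Sobolev inequality for Riesz potentials, and then to deduce the two embedding assertions by interpolation and by a (fractional) Rellich--Fr\'echet--Kolmogorov argument. The whole argument is classical; I record it mainly to fix the constants and the exponent bookkeeping used repeatedly in the sequel.

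First I would prove \eqref{SobInq}. Put $q=\frac{2N}{N-2\alpha}$, which is finite and strictly larger than $2$ precisely because $N>2\alpha$. For $u\in\mathcal S(\mathbb R^N)$ set $g=(-\Delta)^{\alpha/2}u$, so that $\widehat u(\xi)=|\xi|^{-\alpha}\widehat g(\xi)$; equivalently $u=I_\alpha g$, where $I_\alpha$ is the Riesz potential of order $\alpha$ (convolution with a constant multiple of $|x|^{\alpha-N}$). The Sobolev form of the Hardy--Littlewood--Sobolev inequality then gives $\|u\|_{L^q}=\|I_\alpha g\|_{L^q}\le C\|g\|_{L^2}=C\|(-\Delta)^{\alpha/2}u\|_{L^2}$, the exponent relation $\frac1q=\frac12-\frac\alpha N$ being exactly the admissible one; squaring and invoking density of $\mathcal S(\mathbb R^N)$ in $H^\alpha(\mathbb R^N)$ yields \eqref{SobInq} for all $u\in H^\alpha(\mathbb R^N)$. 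Alternatively one can argue directly with the Gagliardo seminorm, truncating $u$ at dyadic amplitudes and estimating each level set by the double integral, as in \cite{NPV}; I would use whichever form is most convenient downstream.

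Next, the continuous embedding into $L^r(\mathbb R^N)$ for $2\le r\le q$ follows by interpolation: choosing $\theta\in[0,1]$ with $\frac1r=\frac{1-\theta}{2}+\frac{\theta}{q}$ and combining $\|u\|_{L^r}\le\|u\|_{L^2}^{1-\theta}\|u\|_{L^q}^\theta$ with \eqref{SobInq} and $\|u\|_{L^2}\le\|u\|_{H^\alpha}$ gives $\|u\|_{L^r}\le C\|u\|_{H^\alpha}$. For the local compact embedding into $L^r_{\mathrm{loc}}(\mathbb R^N)$ with $2\le r<q$, I would start from the fractional Rellich theorem on a bounded Lipschitz domain $\Omega$, i.e.\ $H^\alpha(\Omega)\hookrightarrow\hookrightarrow L^2(\Omega)$ (proved via the Fr\'echet--Kolmogorov criterion, whose key input is $\|u(\cdot+h)-u\|_{L^2}^2\le 4|h|^{2\alpha}\|(-\Delta)^{\alpha/2}u\|_{L^2}^2$, itself a consequence of Plancherel and $2-2\cos t\le4|t|^{2\alpha}$). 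Given a bounded sequence $\{u_n\}$ in $H^\alpha(\mathbb R^N)$, this produces a subsequence converging in $L^2(\Omega)$; interpolating against the uniform $L^q(\Omega)$ bound coming from \eqref{SobInq} upgrades the convergence to $L^r(\Omega)$, and exhausting $\mathbb R^N$ by an increasing sequence of balls and diagonalizing gives a subsequence converging in $L^r_{\mathrm{loc}}(\mathbb R^N)$.

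The only step with genuine content is the first, namely the inequality \eqref{SobInq}, equivalently the Hardy--Littlewood--Sobolev / fractional Sobolev inequality; everything else is interpolation and the Fr\'echet--Kolmogorov theorem. Since \eqref{SobInq} is classical, in the present context it may simply be quoted from \cite{NPV} (or derived from Hardy--Littlewood--Sobolev as above), so I do not anticipate any real obstacle in establishing Lemma~\ref{sobInq}.
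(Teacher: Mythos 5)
Your argument is correct, and it also matches what the paper actually does at the only point where a choice is made: the paper's entire proof of Lemma~\ref{sobInq} is a citation of Theorem~6.5 of \cite{NPV}, and you note yourself that \eqref{SobInq} may simply be quoted from that reference. What you add beyond the paper is a reconstruction of the standard proof: the inequality \eqref{SobInq} via the Riesz-potential representation $u=I_\alpha(-\Delta)^{\alpha/2}u$ and the Hardy--Littlewood--Sobolev inequality (with the correct exponent relation $\tfrac1q=\tfrac12-\tfrac{\alpha}{N}$, $q=\tfrac{2N}{N-2\alpha}$), the continuous embedding for $2\le r\le q$ by interpolation with the $L^2$ norm, and the compact embedding into $L^r_{\mathrm{loc}}$ for $r<q$ via the translation estimate $\|u(\cdot+h)-u\|_{L^2}^2\le C|h|^{2\alpha}\|(-\Delta)^{\alpha/2}u\|_{L^2}^2$, the Fr\'echet--Kolmogorov criterion, interpolation against the uniform $L^q$ bound, and a diagonal argument over an exhaustion by balls. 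All of these steps are sound; the self-contained route buys independence from the cited reference at the cost of length, while the paper's citation is the economical choice since the lemma is classical and is used here only as a black box.
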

\begin{proof}
See for example, Theorem~6.5 of \cite{NPV}.
\end{proof}

We require the fractional Gagliardo-Nirenberg inequality.
\begin{lem}
Let $1\leq r<\infty, 0<\alpha<1,$ and $N>2\alpha.$ Then, for any $u\in H^\alpha(\mathbb{R}^N),$ one has
\[
\|u\|_{L^r}\leq C \|(-\Delta)^{\alpha/2}u\|_{L^2}^\rho \|u\|_{L^t}^{1-\rho},
\]
where $t\geq 1, \rho\in [0,1],$ $C=C({N,\alpha, \rho})$, and $\rho$ satisfies the identity
\[
\frac{N}{r}=\frac{\rho(N-2\alpha)}{2}+\frac{N(1-\rho)}{t}.
\]
\end{lem}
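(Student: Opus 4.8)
The plan is to derive this from the fractional Sobolev embedding of Lemma~\ref{sobInq} combined with Hölder interpolation between Lebesgue spaces. Write $2^\ast_\alpha:=\frac{2N}{N-2\alpha}$, which is well defined and $>2$ because $N>2\alpha$. Lemma~\ref{sobInq} furnishes a constant $C_0=C_0(N,\alpha)$ with $\|u\|_{L^{2^\ast_\alpha}}\le C_0\|(-\Delta)^{\alpha/2}u\|_{L^2}$ for every $u\in H^\alpha(\mathbb{R}^N)$. We may assume the right-hand side of the asserted inequality is finite, so that $u\in L^t(\mathbb{R}^N)$ and $(-\Delta)^{\alpha/2}u\in L^2(\mathbb{R}^N)$; otherwise there is nothing to prove.

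Next I would rewrite the constraint on $\rho$. Dividing $\frac{N}{r}=\frac{\rho(N-2\alpha)}{2}+\frac{N(1-\rho)}{t}$ by $N$ and using $\frac{N-2\alpha}{2N}=\frac{1}{2^\ast_\alpha}$ turns it into the equivalent form
\[
\frac{1}{r}=\frac{\rho}{2^\ast_\alpha}+\frac{1-\rho}{t}.
\]
Thus $1/r$ is a convex combination of $1/2^\ast_\alpha$ and $1/t$; in particular $r\ge 1$ and $r$ lies between $t$ and $2^\ast_\alpha$. For $\rho\in(0,1)$ I would then split $|u|^r=|u|^{\rho r}\,|u|^{(1-\rho)r}$ and apply Hölder's inequality with the exponents $\frac{2^\ast_\alpha}{\rho r}$ and $\frac{t}{(1-\rho)r}$; the displayed identity says precisely that these are conjugate, and the same identity forces both to be $\ge 1$. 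This yields
\[
\|u\|_{L^r}\le \|u\|_{L^{2^\ast_\alpha}}^{\rho}\,\|u\|_{L^t}^{1-\rho}.
\]
The endpoint cases $\rho=0$ (where $r=t$) and $\rho=1$ (where $r=2^\ast_\alpha$) are trivial and require no interpolation. Substituting the Sobolev bound for $\|u\|_{L^{2^\ast_\alpha}}$ and setting $C:=C_0^{\rho}$ gives the claimed estimate with $C=C(N,\alpha,\rho)$.

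No step here presents a genuine difficulty; the only things to watch are that the two Hölder exponents are admissible (a direct consequence of the constraint on $\rho$) and that the degenerate values $\rho\in\{0,1\}$ are treated separately so that one does not divide by zero. If one preferred a proof not invoking Lemma~\ref{sobInq}, one could argue directly in frequency variables — Plancherel to handle $\||\xi|^{2\alpha}\widehat{u}\|_{L^2}$, a low/high-frequency Hölder split of $\widehat{u}$, and optimization over the cutoff radius — but the embedding-plus-interpolation argument above is the shortest route and is all that is needed in the sequel.
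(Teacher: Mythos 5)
Your proposal is correct and follows essentially the same route as the paper: rewrite the constraint as $\frac{1}{r}=\frac{\rho}{2^\ast_\alpha}+\frac{1-\rho}{t}$, interpolate via H\"{o}lder between $L^t$ and $L^{2N/(N-2\alpha)}$, and then control the critical norm by the Sobolev inequality \eqref{SobInq}. Your explicit treatment of the endpoint cases $\rho\in\{0,1\}$ is a minor refinement of the paper's argument, not a different method.
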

\begin{proof}
It is a consequence of the
the H\"{o}lder and fractional Sobolev-type inequalities. If $r=1,$ it is obvious. For $r>1,$ the H\"{o}lder inequality yields
\[
\|u\|_{L^r}\leq \|u\|_{L^{2N/(N-2\alpha) } }^{\rho}\|u\|_{L^t }^{1-\rho},\ \textrm{where}\ \ \frac{1}{r}=\frac{1-\rho}{t}+\frac{\rho(N-2\alpha)}{2N}.
\]
Using the Sobolev inequality \eqref{SobInq}, we obtain that
\[
\|u\|_{L^r}\leq (C({N,\alpha}))^{\rho/2} \|(-\Delta)^{\alpha/2}u\|_{L^2}^\rho \|u\|_{L^t }^{1-\rho},
\]
which gives the desired inequality.
\end{proof}
We will make frequent use of the Hardy-Littlewood-Sobolev inequality:
\begin{lem}
Suppose $1<r, t<+\infty$ and $0<\gamma<N$ with $\frac{1}{r}+\frac{1}{t}+\frac{\gamma}{N}=2.$ If $u\in L^r(\mathbb{R}^N)$ and
$v\in L^t(\mathbb{R}^N),$ then there exists $C(r,t, \alpha, N)>0$ such that
\[
\iint_{\mathbb{R}^N}|u(x)||x-y|^{-\gamma}|v(y)|\ dxdy  \leq C(r,t,\alpha, N)\|u\|_{L^r}\|v\|_{L^t}.
\]
\end{lem}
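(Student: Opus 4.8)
The plan is to deduce this bilinear estimate from the mapping properties of the fractional integration (Riesz potential) operator. Writing $I_\gamma v(x)=\int_{\mathbb{R}^N}|x-y|^{-\gamma}|v(y)|\,dy$, H\"{o}lder's inequality gives
\[
\iint_{\mathbb{R}^N}|u(x)||x-y|^{-\gamma}|v(y)|\,dxdy=\int_{\mathbb{R}^N}|u(x)|\,I_\gamma v(x)\,dx\le \|u\|_{L^r}\,\|I_\gamma v\|_{L^{r'}},
\]
so it suffices to prove the one-variable bound $\|I_\gamma v\|_{L^{r'}}\le C\|v\|_{L^t}$. Since $\tfrac1r+\tfrac1{r'}=1$, the hypothesis $\tfrac1r+\tfrac1t+\tfrac\gamma N=2$ is equivalent to $\tfrac1{r'}=\tfrac1t-\tfrac{N-\gamma}{N}$; setting $\mu:=N-\gamma\in(0,N)$, the operator $I_\gamma$ is, up to a constant, the Riesz potential of order $\mu$, and the assumption $1<r,t<\infty$ forces $1<t<N/\mu$ and $1<r'<\infty$, which is exactly the admissible range for the bound $I_\mu:L^t\to L^{r'}$.

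The key step is Hedberg's pointwise inequality. For $f\ge 0$ and $\rho>0$, split $I_\gamma f(x)=\int_{|x-y|<\rho}|x-y|^{\mu-N}f(y)\,dy+\int_{|x-y|\ge\rho}|x-y|^{\mu-N}f(y)\,dy$. Decomposing the first integral over the dyadic annuli $2^{-j-1}\rho\le|x-y|<2^{-j}\rho$ and using $\int_{B_r(x)}f\le C r^N (Mf)(x)$, where $M$ denotes the Hardy--Littlewood maximal operator, bounds it by $C\rho^\mu (Mf)(x)$, the geometric series converging because $\mu>0$. For the tail, H\"{o}lder with exponents $t,t'$ gives the bound $C\|f\|_{L^t}\,\rho^{\mu-N/t}$, the integral $\int_{|x-y|\ge\rho}|x-y|^{(\mu-N)t'}\,dy$ being finite precisely because $t<N/\mu$. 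Balancing the two terms at $\rho\sim(\|f\|_{L^t}/Mf(x))^{t/N}$ produces, for almost every $x$,
\[
I_\gamma f(x)\le C\,(Mf(x))^{1-\mu t/N}\,\|f\|_{L^t}^{\mu t/N}.
\]

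Finally, raising this to the power $r'$, integrating in $x$, and using the algebraic identity $r'\,(1-\mu t/N)=t$ (again just a restatement of $\tfrac1{r'}=\tfrac1t-\tfrac\mu N$) gives $\|I_\gamma f\|_{L^{r'}}\le C\,\|f\|_{L^t}^{\mu t/N}\,\|Mf\|_{L^t}^{1-\mu t/N}$. Since $t>1$, the Hardy--Littlewood maximal theorem yields $\|Mf\|_{L^t}\le C\|f\|_{L^t}$, whence $\|I_\gamma v\|_{L^{r'}}\le C\|v\|_{L^t}$ and the stated inequality follows. I expect the only genuinely substantive ingredient to be the boundedness of $M$ on $L^t$, which requires the strict inequality $t>1$ and is consistent with the standing hypothesis $1<r,t<\infty$; everything else is exponent bookkeeping and the elementary dyadic estimate, plus the mild care needed to handle the null set where $Mf$ vanishes or is infinite. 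As an alternative one could observe that $|x|^{-\gamma}\in L^{N/\gamma}_{\mathrm w}(\mathbb{R}^N)$ and invoke the generalized Young inequality for convolution against weak-$L^p$ kernels, but the argument above is self-contained modulo the maximal theorem.
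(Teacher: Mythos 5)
Your proof is correct, and it takes a genuinely different route from the paper: the paper does not prove this lemma at all, but simply cites Theorem~4.3 of Lieb--Loss, where the inequality is established by rearrangement and layer-cake arguments (which in particular yield sharp constants in the diagonal case). Your argument -- dualize with H\"{o}lder to reduce to the Riesz-potential bound $I_\mu:L^t\to L^{r'}$ with $\mu=N-\gamma$, prove Hedberg's pointwise estimate $I_\mu f(x)\le C\,(Mf(x))^{1-\mu t/N}\|f\|_{L^t}^{\mu t/N}$ via the dyadic near/far splitting, and close with the Hardy--Littlewood maximal theorem -- is the standard alternative, and your exponent bookkeeping checks out: $\tfrac1{r'}=\tfrac1t-\tfrac\mu N$ is exactly the hypothesis $\tfrac1r+\tfrac1t+\tfrac\gamma N=2$, the far-field integral converges precisely when $t<N/\mu$ (equivalent to $r'<\infty$, i.e.\ $r>1$), and $r'(1-\mu t/N)=t$, so the final interpolation and the maximal theorem (this is where $t>1$ is genuinely used, matching the strict hypotheses of the lemma) finish the proof. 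What each approach buys: yours is self-contained modulo the maximal theorem, elementary, and covers the whole non-endpoint range, but gives no sharp constant; the citation route defers everything to the literature, which is entirely adequate here since the paper only needs the qualitative inequality with some constant $C$. Your closing alternative -- observing $|x|^{-\gamma}\in L_{\mathrm{w}}^{N/\gamma}(\mathbb{R}^N)$ and invoking the generalized Young inequality -- is in fact closest in spirit to how the paper itself treats general convolution kernels via \eqref{weakY}.
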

\begin{proof}
See Theorem~4.3 of \cite{Lieb}.
\end{proof}
We also need the weak version of Young's inequality for convolutions which states that for any three measurable functions $f\in L^q(\mathbb{R}^N), g\in L_{ \mathrm{w}}^r(\mathbb{R}^N),$ and $h\in L^t(\mathbb{R}^N),$
\begin{equation}\label{weakY}
\left\vert \iint_{\mathbb{R}^N}f(x)g(x-y)h(x)\ dxdy \right\vert\leq C(q, N, r)\|f\|_{L^q}\|g\|_{L_{\mathrm{w}}^r}\|h\|_{L^t},
\end{equation}
where $1<q, r, t<\infty$ and
\[
\frac{1}{q}+\frac{1}{r}+\frac{1}{t}=2.
\]
The weak Young's inequality \eqref{weakY} was proved by Lieb \cite{Lieb3} as a corollary of the Hardy-Littlewood-Sobolev inequality.

\begin{remark}
In view of the weak version of Young's inequality, we see that the integral
\[
\iint_{\mathbb{R}^N}\mathcal{K}(x-y)|u(x)|^p|u(y)|^p\ dxdy
\]
is well-defined if $|u|^p\in L^t(\mathbb{R}^N)$ for all $t>1$ satisfying the condition
\[
\frac{2}{t}+\frac{1}{r}=2,\ \ \mathrm{or}\ \ t=\frac{2r}{2r-1}.
\]
In the present context, since $u\in H^\alpha(\mathbb{R}^N),$ we must require $tp\in [2, \frac{2N}{N-2\alpha}].$
By our assumption on $p=p_k,$ it follows that
\begin{equation}\label{checkAss}
\frac{1}{tp}=\frac{2r-1}{2pr}=\frac{1}{p}-\frac{1}{2pr}>\frac{1}{p}-\frac{2N+2\alpha-pN}{2pN}=\frac{1}{2}-\frac{\alpha}{pN}>\frac{1}{2}-\frac{\alpha}{N},
\end{equation}
and so, we have that $|u|^{p}\in L^{\frac{2r}{2r-1} }(\mathbb{R}^N)$ for every $u\in H^\alpha(\mathbb{R}^N).$ When
the convolution potential is $\mathcal{K}(x)=\mathcal{K}_\beta(x)=|x|^{-(N-\beta)},$ one has
$\mathcal{K}_\beta\in L_{\mathrm{w}}^r(\mathbb{R}^N)$ if and only if $N-\beta=N/r$ and \eqref{checkAss} reduces to
\[
\frac{N+\beta}{2Np}=\frac{1}{2p}+\frac{\beta}{2Np}>\frac{1}{2p}+\frac{pN-N-2\alpha}{2Np}
=\frac{1}{2}-\frac{\alpha}{Np}>\frac{N-2\alpha}{2N},
\]
and so, $|u|^p\in L^{\frac{2N}{N+\beta}}(\mathbb{R}^N)$ for
every $u\in H^\alpha(\mathbb{R}^N).$
One can also see that the condition (H3), namely $\Gamma<2\alpha +N(2-p),$ is equivalent to $p<(N+2\alpha+\beta)/N.$
These observations illustrate that the assumptions on the powers $p$ and $p_k$ as given in \eqref{AssPo} and \eqref{assCom} are
quite natural in the present setting.
\end{remark}

We now establish some important properties of the function $J(\sigma).$
We have broken the proof into several lemmas so that later, in the case of coupled system of fNLS equations, it will be
easy to identify the parts which require changing.

\smallskip

In what follows, we denote by $b=a/s$ and $\mu=\lambda/2p$ (see definition \eqref{EdefG} of the energy functional).
We denote for any $r>0,$
\[
D_r(|f|,|g|)=\frac{|f(x)|^r |g(y)|^r } {|x-y|^{N-\beta} },\ x,y\in \mathbb{R}^N.
\]
With this notation, the Coulomb energy functional has the following form
\[
 \mathbb{D}_{r}(f, g)=\int_{\mathbb{R}^N}\mathcal{I}_\beta^r(g)f(x)\ dx=\iint_{\mathbb{R}^N}D_r(|f|,|g|)\ dxdy.
\]
In particular, we simply write $\mathbb{D}_{r}(f)$ for the functional $\mathbb{D}_{r}(f, f).$

\begin{lem}\label{NegHP}
Suppose that \eqref{AssPo} holds. For any $\sigma>0,$
the following statements hold.

\smallskip

(i)\ If $f_n\in H^\alpha(\mathbb{R}^N)$, $\|f_n\|_{L^2}^2\to \sigma,$
and $J(f_n)\to J(\sigma),$ then there exists $B>0$ such that $\|f_n\|_{H^\alpha}\leq B$ for all $n.$

\smallskip

(ii) $J(\sigma)$ is bounded from below on $\Sigma_\sigma.$ Moreover, $J(\sigma)<0.$

\end{lem}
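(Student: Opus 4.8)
The plan is to prove the two parts in reverse logical order: first establish coercivity-type control, then derive boundedness. For part (ii), I would start by showing $J$ is bounded below on $\Sigma_\sigma$ via the interpolation inequalities recalled above. The quantity $\|(-\Delta)^{\alpha/2}f\|_{L^2}^2$ is the only positive term in $J(f)$; I would bound the two subtracted terms by it. For the power term $\frac{a}{s}\|f\|_{L^s}^s$, apply the fractional Gagliardo-Nirenberg inequality with $t=2$: since $2<s<2+\tfrac{4\alpha}{N}$, the resulting exponent on $\|(-\Delta)^{\alpha/2}f\|_{L^2}$, namely $s\rho$ where $\rho=\tfrac{N(s-2)}{2\alpha s}$, satisfies $s\rho<2$; using $\|f\|_{L^2}^2=\sigma$ and Young's inequality with a small parameter, this term is absorbed, up to an additive constant depending on $\sigma$. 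For the Coulomb term $\frac{\lambda}{2p}\mathbb{D}_p(f)$, use the weak Young inequality \eqref{weakY} (or Hardy-Littlewood-Sobolev) to get $\mathbb{D}_p(f)\le C\||f|^p\|_{L^{2r/(2r-1)}}^2 = C\|f\|_{L^{tp}}^{2p}$ with $tp=\tfrac{2Np}{N+\beta}$, then again apply Gagliardo-Nirenberg with $t=2$; the computation in the Remark (the chain \eqref{checkAss} and its consequences, together with the equivalence of (H3) to $p<(N+2\alpha+\beta)/N$) guarantees that the resulting power of $\|(-\Delta)^{\alpha/2}f\|_{L^2}$ is strictly less than $2$, so this term is likewise absorbed. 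Combining, $J(f)\ge \tfrac14\|(-\Delta)^{\alpha/2}f\|_{L^2}^2 - C(\sigma) \ge -C(\sigma)$ for all $f\in\Sigma_\sigma$, which proves boundedness below.

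For the strict negativity $J(\sigma)<0$, I would use a scaling test function. Fix any $\varphi\in H^\alpha(\mathbb{R}^N)$ with $\|\varphi\|_{L^2}^2=\sigma$ and, for $\theta>0$, set $\varphi_\theta(x)=\theta^{N/2}\varphi(\theta x)$, which preserves the $L^2$-norm so $\varphi_\theta\in\Sigma_\sigma$. Then $\|(-\Delta)^{\alpha/2}\varphi_\theta\|_{L^2}^2=\theta^{2\alpha}\|(-\Delta)^{\alpha/2}\varphi\|_{L^2}^2$, $\|\varphi_\theta\|_{L^s}^s=\theta^{N(s-2)/2}\|\varphi\|_{L^s}^s$, and $\mathbb{D}_p(\varphi_\theta)=\theta^{Np-N-\beta}\mathbb{D}_p(\varphi)$. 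Hence
\[
J(\varphi_\theta)=\frac{\theta^{2\alpha}}{2}\|(-\Delta)^{\alpha/2}\varphi\|_{L^2}^2 - a\,\theta^{\frac{N(s-2)}{2}}\frac{\|\varphi\|_{L^s}^s}{s} - \lambda\,\theta^{Np-N-\beta}\frac{\mathbb{D}_p(\varphi)}{2p}.
\]
As $\theta\to 0^+$, the kinetic term decays like $\theta^{2\alpha}$. Because $2<s<2+\tfrac{4\alpha}{N}$ gives $\tfrac{N(s-2)}{2}<2\alpha$, and $2\le p<\tfrac{N+2\alpha+\beta}{N}$ gives $Np-N-\beta<2\alpha$ (and $Np-N-\beta\ge N-\beta-N=-\beta$... more simply $\ge 0$ when $p\ge1+\beta/N$, but in any case one exponent is $<2\alpha$ among the active ones), the dominant balance for small $\theta$ is governed by whichever of $a,\lambda$ is positive: since $a+\lambda>0$, at least one of the negative terms is present and decays strictly slower than $\theta^{2\alpha}$, so $J(\varphi_\theta)<0$ for $\theta$ small enough. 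Therefore $J(\sigma)\le J(\varphi_\theta)<0$.

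For part (i), boundedness of a minimizing sequence $\{f_n\}$, I would combine the lower estimate from part (ii) with the hypothesis $J(f_n)\to J(\sigma)$: from $J(f_n)\ge \tfrac14\|(-\Delta)^{\alpha/2}f_n\|_{L^2}^2 - C(\sigma_n)$ where $\sigma_n=\|f_n\|_{L^2}^2\to\sigma$ (so $\sigma_n$ is bounded and $C(\sigma_n)$ is uniformly bounded), and from $J(f_n)\le J(\sigma)+1$ for large $n$, one gets $\|(-\Delta)^{\alpha/2}f_n\|_{L^2}^2\le 4\big(J(\sigma)+1+\sup_n C(\sigma_n)\big)=:B_1$. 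Together with $\|f_n\|_{L^2}^2\to\sigma$ this yields $\|f_n\|_{H^\alpha}^2=\|(-\Delta)^{\alpha/2}f_n\|_{L^2}^2+\|f_n\|_{L^2}^2\le B$ for a suitable $B>0$ and all $n$.

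The main obstacle is the careful bookkeeping of exponents in the absorption argument for part (ii): one must verify that \emph{both} the pure-power term and the Coulomb term produce powers of $\|(-\Delta)^{\alpha/2}f\|_{L^2}$ strictly below $2$ under the stated ranges of $s$, $p$, $\beta$, $\alpha$. For the Coulomb term this is precisely where the upper bound $p<\tfrac{N+2\alpha+\beta}{N}$ enters, and the inequality chains already recorded in the Remark supply exactly the needed arithmetic, so the estimate goes through; one should also note the technicality that the interpolation requires $N>2\alpha$, which holds since $N\ge2$ and $\alpha<1$.
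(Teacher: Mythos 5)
Your proposal is correct and follows essentially the same route as the paper: Hardy--Littlewood--Sobolev plus fractional Gagliardo--Nirenberg estimates with subcritical exponents (exactly where $s<2+\tfrac{4\alpha}{N}$ and $p<\tfrac{N+2\alpha+\beta}{N}$ enter) to get the lower bound and the $H^\alpha$-boundedness of minimizing sequences, and an $L^2$-preserving scaling family ($\theta^{N/2}\varphi(\theta x)$ is just a reparametrization of the paper's $\theta^{1/2}f(\theta^{1/N}x)$) to show $J(\sigma)<0$ by comparing the decay rates of the kinetic and nonlinear terms as $\theta\to 0^+$. The only quibble is your aside on the lower bound for the exponent $Np-N-\beta$: since $p\geq 2$ and $\beta<N$ one has $Np-N-\beta\geq N-\beta>0$ directly, so no case distinction there is needed.
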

\begin{proof}
Applying the fractional Gagliardo-Nirenberg inequality and
using the boundedness of $\|f_n\|_{L^2},$ we obtain
\begin{equation}\label{Estp17}
\begin{aligned}
\|f_n\|_{L^{s}}^{s}\leq C \|(-\Delta)^{\alpha/2}f_n\|_{L^2}^{\lambda^\prime} \|f_n\|_{L^2}^{1-{\lambda^\prime}}  \leq C \|(-\Delta)^{\alpha/2}f_n\|_{L^2}^{\lambda^\prime}
 \leq C \|f_n\|_{H^\alpha}^{\lambda^\prime},
\end{aligned}
\end{equation}
where $\lambda^\prime=N(s-2)/2\alpha s.$
Next, by our assumption on $p$, we have
$|f|^p\in L^{\frac{2N}{N+\beta}}(\mathbb{R}^N)$ for
every $f\in H^\alpha(\mathbb{R}^N).$ Applying
the Hardy-Littlewood-Sobolev inequality with $r=t=2N/(N+\beta)$ and the fractional Gagliardo-Nirenberg inequality, we get
\begin{equation}\label{Mbou3}
\begin{aligned}
\iint_{\mathbb{R}^N}D_p(|f_n|,|f_n|)\ dx dy  & \leq C \| |f_n|^{p}\|_{L^{2N/(N+\beta)}}^2 =
C \|f_n\|_{L^{2Np/(N+\beta)}}^{2p} \\
& \leq C  \|(-\Delta)^{\alpha/2}f_n\|_{L^2}^{2p\mu^\prime}\|f_n\|_{L^2}^{2p(1-\mu^\prime)}\\
& \leq C \|(-\Delta)^{\alpha/2}f_n\|_{L^2}^{2p\mu^\prime}\leq C \|f_n\|_{H^\alpha}^{2p\mu^\prime},
\end{aligned}
\end{equation}
where $\mu^\prime=(Np-N-\beta)/2\alpha p.$
We now write
\[
\frac{1}{2}\|f_n\|_{H^\alpha}^2= J(f_n)+b \|f_n\|_{L^s}^s+\mu \iint_{\mathbb{R}^N}D_p(|f_n|,|f_n|)\ dx dy  + \frac{1}{2}\|f_n\|_{L^2}^2.
\]
Since $\{J(f_n)\}_{n\geq 1}$ is a convergent sequence of numbers, so it is bounded.
Utilizing the estimates \eqref{Estp17} and \eqref{Mbou3}, the above identity implies that
\begin{equation}\label{addS1}
\frac{1}{2}\|f_n\|_{H^\alpha}^2 \leq C \left( 1+ \|f_n\|_{H^\alpha}^{\lambda^\prime}+ \|f_n\|_{H^\alpha}^{2p\mu^\prime} \right).
\end{equation}
By our assumption on $p,$ we have that $pN-N-\beta<N$ and so
\[
2p\mu^\prime=\frac{pN-N-\beta}{\alpha}<\frac{N}{\alpha}<\frac{2\alpha}{\alpha}=2.
\]
Since $\lambda^\prime\in (0, 2)$ and $0<\frac{N-\beta}{\alpha}\leq 2p\mu^\prime <2,$ it follows from \eqref{addS1} that
that there exists a constant $B>0$ such that $\|f_n\|_{H^\alpha}\leq B$ for each $n.$
The statement that $J(\sigma)>-\infty$ easily follows from the estimates \eqref{Estp17} and \eqref{Mbou3}.

\smallskip

To show $J(\sigma)<0,$ first we observe that
if $u_\theta(\cdot)=\theta^A u(\theta^B \cdot),$ for $A, B\in \mathbb{R}$ and $\theta>0,$ then
we have that for any $p,$
\[
\mathcal{I}_\beta^p(u_\theta)(x)
 = \theta^{pA+B(N-\beta)}\int_{\mathbb{R}^N} \frac{ | u(\theta^B y)|^p} {|\theta^B x - \theta^B y|^{N-\beta} }dy
 =\theta^{pA-B\beta}\mathcal{I}_\beta^p(u)(\theta^B x).
\]
Now let $f\in \Sigma_\sigma$ be fixed.
For any $\theta>0,$ define the scaled function $f_\theta(x)=\theta^{1/2}f(\theta^{1/N}x).$ Then $f_\theta\in \Sigma_\sigma$ as well. If both $a>0$ and $\lambda>0;$ or $a=0$ and $\lambda>0,$ then we have that
\begin{equation}\label{NegAIn}
J(f_\theta)\leq \frac{\theta^{2\alpha/N}}{2}\|(-\Delta)^{\alpha/2}f\|_{L^2}^2-\mu \theta^{\kappa }\iint_{\mathbb{R}^N}D_p(|f|,|f|)\ dx dy,
\end{equation}
where $\kappa=(Np-\beta-N)/N.$
Since $\kappa$ satisfies
$0<\frac{N-\beta }{N}\leq \kappa<\frac{2\alpha}{N},$ we obtain that $ J(f_\theta)<0$ for
sufficiently small $\theta.$
This proves that $J(\sigma)\leq J(f_\theta)<0.$ When $\lambda=0,$ we have that
\[
J(f_\theta)\leq \frac{\theta^{2\alpha/N}}{2}\|(-\Delta)^{\alpha/2}f\|_{L^2}^2- b\theta^{(s-2)/2}\|f\|_{L^s}^s.
\]
Since the condition $2<s<2+\frac{4\alpha}{N}$ implies $(s-2)/2\in (0, 2\alpha/N),$ we again obtain that $ J(f_\theta)<0$ for
sufficiently small $\theta.$
\end{proof}

\begin{remark}
Analogue of Lemma~\ref{NegHP} for
$\widetilde{J}(\sigma)$ can be
proved by
applying the weak version of Young's inequality and the fractional Gagliardo-Nirenberg inequality.
To see this, observe first that our assumption on $p_k$ guarantees
$|f|^{p_k}\in L^{\frac{2r}{2r-1} }$ for every $f\in H^\alpha(\mathbb{R}^N)$ and all $1\leq k \leq d.$
Then, analogue of estimate \eqref{Mbou3} for any
minimizing sequence $\{f_n\}_{n\geq 1}$ of $\widetilde{J}(\sigma)$
takes the form
\[
\begin{aligned}
\int_{\mathbb{R}^N}(\mathcal{K}\star |f_n|^{p_k})|f_n|^{p_k}\ dx & \leq
 \|\mathcal{K}\|_{L_{\mathrm{w} }^r}\|f_n\|_{L^{\frac{2p_kr} {2r-1}} }^{2p_k}
 \leq C \|f_n\|_{L^2}^{2(1-\mu_k^\prime)p_k}\|\nabla f_n\|_{L^2}^{2\mu_k^\prime p_k}\\
& \leq C\|\nabla f_n\|_{L^2}^{2\mu_k^\prime p_k},\ \forall\ 1\leq k\leq d,
\end{aligned}
\]
where the numbers $\mu_k^\prime$ are given by
\[
\mu_k^\prime=\frac{N(rp_k-2r+1)}{2r\alpha p_k},\ 1\leq k\leq d,
\]
and by our assumption on $p_k$, the powers $2\mu_k^{\prime} p_k$ satisfy
\[
\begin{aligned}
 2\mu_k^{\prime} p_k=\frac{Np_k}{\alpha}-\frac{2N}{\alpha}+\frac{N}{r\alpha}<\frac{N}{\alpha}\left(\frac{2\alpha}{N}+2-\frac{1}{r} \right)-\frac{2N}{\alpha}+\frac{N}{r\alpha}=2.
\end{aligned}
\]
Analogue of the estimate \eqref{Estp17} remains true for each $s=s_j, 1\leq j\leq m.$
Then, an analogue of estimate \eqref{addS1} proves that any minimizing sequence $\{f_n\}_{n\geq 1}$ of
$\widetilde{J}(\sigma)$ is bounded in $H^\alpha(\mathbb{R}^N).$
The proof that $\widetilde{J}(\sigma)\in (-\infty, 0)$ will go through unchanged.
\end{remark}

\begin{lem}\label{PoLem1}
Suppose that $\lambda\neq 0.$
If $f_n\in H^\alpha(\mathbb{R}^N), \|f_n\|_{L^2}^2\to \sigma$, and $J(f_n)\to J(\sigma),$
then
there exists $\delta>0$ and $n_0\in \mathbb{N},$ depending on $\delta,$ such that for every $n\geq n_0,$
\[
\mathbb{D}_{p}(f_n)=\iint_{\mathbb{R}^N}D_p(|f_n|,|f_n|)\ dx dy =\int_{\mathbb{R}^N}\mathcal{I}_\beta^{p}(f_n)|f_n|^{p}\ dx \geq \delta.
\]
Moreover, for any $T>1,$ there exists $n_0\in \mathbb{N},$ depending on $T,$ such that for all $n\geq n_0,$
\[
J(T^{1/2} f_n)|_{a=0}
< T J(f_n)|_{a=0}.
\]
If $a> 0$ and $\lambda\geq 0,$ the same conclusions hold with $\mathbb{D}_{p}(f_n)$ replaced by $\|f_n\|_{L^s}^s$ and $J(g)|_{a=0}$ replaced by $J(g)|_{\lambda=0}.$
\end{lem}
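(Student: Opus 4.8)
The plan is to derive the ``moreover'' (strict scaling) inequalities from the uniform lower bound by a direct computation, and then to prove the lower bound by contradiction using a vanishing argument. For the reduction, note that the amplitude rescaling $g\mapsto T^{1/2}g$ multiplies $\|(-\Delta)^{\alpha/2}g\|_{L^2}^2$ by $T$, multiplies $\|g\|_{L^s}^s$ by $T^{s/2}$, and multiplies $\mathbb{D}_p(g)$ by $T^p$. Hence, with $a=0$,
\[
J(T^{1/2}f_n)|_{a=0}-T\,J(f_n)|_{a=0}=\mu\,\mathbb{D}_p(f_n)\,(T-T^p)=\mu\,T\,(1-T^{p-1})\,\mathbb{D}_p(f_n),
\]
and, with $\lambda=0$ (so that $b=a/s>0$),
\[
J(T^{1/2}f_n)|_{\lambda=0}-T\,J(f_n)|_{\lambda=0}=b\,\|f_n\|_{L^s}^s\,(T-T^{s/2})=b\,T\,(1-T^{s/2-1})\,\|f_n\|_{L^s}^s.
\]
By \eqref{AssPo} we have $p\geq 2>1$ and $s>2$, so for $T>1$ both factors $1-T^{p-1}$ and $1-T^{s/2-1}$ are negative; thus each right-hand side is strictly negative precisely when $\mathbb{D}_p(f_n)>0$, respectively $\|f_n\|_{L^s}^s>0$. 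Consequently, once the lower bound $\mathbb{D}_p(f_n)\geq\delta$ (resp. $\|f_n\|_{L^s}^s\geq\delta$) for all $n\geq n_0$ is in hand, the claimed strict inequalities hold for every $n\geq n_0$.

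It remains to prove the uniform lower bound, and here I argue by contradiction. Passing to a subsequence, suppose $\mathbb{D}_p(f_n)\to 0$ (case $\lambda\neq 0$), respectively $\|f_n\|_{L^s}^s\to 0$ (case $a>0$). By Lemma~\ref{NegHP}(i) the sequence $\{f_n\}$ is bounded in $H^\alpha(\mathbb{R}^N)$. The first step is to show that the local mass vanishes, $\sup_{y\in\mathbb{R}^N}\int_{B_1(y)}|f_n|^2\,dx\to 0$. If this failed, then along a further subsequence there would be $c>0$ and centers $y_n$ with $\int_{B_1(y_n)}|f_n|^2\,dx\geq c$; in the case $a>0$, H\"older's inequality on $B_1(y_n)$ gives $\|f_n\|_{L^s}^s\geq\int_{B_1(y_n)}|f_n|^s\,dx\geq|B_1|^{1-s/2}c^{s/2}>0$, contradicting $\|f_n\|_{L^s}^s\to 0$, while in the case $\lambda\neq 0$, H\"older on $B_1(y_n)$ (using $p\geq 2$) gives $\int_{B_1(y_n)}|f_n|^p\,dx\geq c_1>0$ with $c_1$ independent of $n$, whence, since $|x-y|\leq 2$ on $B_1(y_n)\times B_1(y_n)$,
\[
\mathbb{D}_p(f_n)\geq\iint_{B_1(y_n)\times B_1(y_n)}\frac{|f_n(x)|^p|f_n(y)|^p}{|x-y|^{N-\beta}}\,dx\,dy\geq 2^{\beta-N}\Big(\int_{B_1(y_n)}|f_n|^p\,dx\Big)^{2}\geq 2^{\beta-N}c_1^2>0,
\]
contradicting $\mathbb{D}_p(f_n)\to 0$. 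So the local mass vanishes.

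Once the local mass vanishes, the vanishing alternative of Lions' concentration-compactness lemma in its fractional form (cf. Lemma~I.1 of \cite{Lio} together with the embeddings of Lemma~\ref{sobInq}) yields $\|f_n\|_{L^q}\to 0$ for every $q\in(2,2N/(N-2\alpha))$. Under \eqref{AssPo} both $s$ and $2Np/(N+\beta)$ lie strictly inside this interval, so $\|f_n\|_{L^s}^s\to 0$ and, by the Hardy--Littlewood--Sobolev inequality, $\mathbb{D}_p(f_n)\leq C\|f_n\|_{L^{2Np/(N+\beta)}}^{2p}\to 0$ as well. Therefore
\[
J(f_n)=\tfrac12\|(-\Delta)^{\alpha/2}f_n\|_{L^2}^2-b\|f_n\|_{L^s}^s-\mu\,\mathbb{D}_p(f_n)\geq -b\|f_n\|_{L^s}^s-\mu\,\mathbb{D}_p(f_n)\to 0,
\]
so $\liminf_n J(f_n)\geq 0$, which contradicts $J(f_n)\to J(\sigma)<0$ from Lemma~\ref{NegHP}(ii). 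This establishes the lower bound and, with the first paragraph, the lemma.

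The main obstacle is the chain ``one nonlinear term small $\Rightarrow$ local mass small $\Rightarrow$ every nonlinear term small'': when $a$ and $\lambda$ are both positive, the smallness of $\mathbb{D}_p(f_n)$ does not by itself bound $\|f_n\|_{L^s}^s$ (nor conversely), and it is exactly the fractional vanishing lemma, fed by the local-mass estimate above, that couples the two quantities so that $J(f_n)$ is forced to be asymptotically nonnegative. The remaining ingredients — the scaling identities and the Hardy--Littlewood--Sobolev bound — are routine.
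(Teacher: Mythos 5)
Your proof is correct, and your ``moreover'' part coincides with the paper's: the identity $J(T^{1/2}f_n)-TJ(f_n)=\mu\,(T-T^{p})\,\mathbb{D}_p(f_n)$ when $a=0$, respectively $b\,(T-T^{s/2})\|f_n\|_{L^s}^s$ when $\lambda=0$, combined with the uniform lower bound, is exactly the computation \eqref{TecPlem}. Where you genuinely diverge is in proving the lower bound itself. The paper disposes of it in one line: if $\liminf_n\mathbb{D}_p(f_n)=0$ then, as in \eqref{NegNlem}, $J(\sigma)=\lim_n J(f_n)\geq\liminf_n\|(-\Delta)^{\alpha/2}f_n\|_{L^2}^2\geq 0$, contradicting $J(\sigma)<0$; but that inequality silently discards the term $-b\|f_n\|_{L^s}^s$ and so is immediate only when $a=0$ (or, for the other assertion, when $\lambda=0$). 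You instead show that smallness of one nonlinear term forces vanishing of the local mass $\sup_{y}\int_{B_1(y)}|f_n|^2\,dx$ (via H\"older with $p\geq 2$, resp. $s>2$, and the pointwise bound $|x-y|^{-(N-\beta)}\geq 2^{\beta-N}$ on $B_1(y_n)\times B_1(y_n)$), then invoke the fractional vanishing lemma (the paper's Lemma~\ref{Lemvan}) together with Hardy--Littlewood--Sobolev to conclude that \emph{both} nonlinear terms tend to zero along the subsequence, whence $\liminf_n J(f_n)\geq 0$ and the same contradiction with Lemma~\ref{NegHP}(ii) follows. This costs an extra appeal to the concentration-compactness machinery that the paper only deploys later, but it buys a complete treatment of the mixed case $a>0$, $\lambda>0$ --- which is actually the case needed in Lemma~\ref{singleDi} --- where the paper's one-line estimate is, as written, incomplete. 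Your exponent checks (that $s$ and $2Np/(N+\beta)$ lie in $(2,2N/(N-2\alpha))$) are consistent with \eqref{AssPo}, so the argument closes.
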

\begin{proof}
To prove the first statement, suppose to the contrary that
$
\LimInf{n\to \infty}\mathbb{D}_{p}(f_n) = 0.
$
Then, it is obvious that
\begin{equation}\label{NegNlem}
J(\sigma)=\lim_{n\to \infty}J\left(f_n\right)\geq \liminf_{n\to \infty}\|(-\Delta)^{\alpha/2}f_n\|_{L^2}^2\geq 0,
\end{equation}
which contradicts the fact $J(\sigma)<0$ and hence, the first statement follows.
To prove the second statement, an easy calculation gives
\begin{equation}\label{TecPlem}
\begin{aligned}
J(T^{1/2} f_n)|_{a=0}& =T \left(\frac{1}{2}\|(-\Delta)^{\alpha/2}
f_n\|_{L^2}^2-\mu \iint_{\mathbb{R}^N} D_{p}( |f_n|, |f_n|)\ dxdy \right)\\
& + \left(T-T^{p}\right)\mu \iint_{\mathbb{R}^N} D_{p}( |f_n|, |f_n|)\ dxdy.
\end{aligned}
\end{equation}
By the first statement, we have $\mathbb{D}_{p}(f_n)\geq \delta.$ Since $p>1$ and $\mu >0,$ the
desired result follows
from \eqref{TecPlem}. The proof in the case $a>0$ is similar.
\end{proof}

\begin{lem}  \label{Lemvan}
Assume that the sequences $\{z_n\}_{n\geq 1}$ and $\{|(-\Delta)^{\alpha/2}z_n|\}_{n\geq 1}$ are bounded in $L^2(\mathbb{R}^N).$
If there is some $R>0$ satisfying
\begin{equation}
\lim_{n\to \infty} \left( \sup_{y \in \mathbb{R}^N} \int_{B_{R}(y)}|z_n(x)|^{2}\ dx\right) = 0,
\label{vanishcon}
\end{equation}
then
$
\Lim{n\to \infty}\|z_n\|_{L^q} = 0
$
for every $2<q<2N/(N-2\alpha).$
\end{lem}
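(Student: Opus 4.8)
The plan is to reduce the statement to a single exponent and then run P.\,L.\ Lions' \emph{vanishing} argument, using the fractional Sobolev embedding in place of the classical one. Note first that $N>2\alpha$ holds automatically under the standing hypotheses, and that the assumptions give a uniform bound $\|z_n\|_{H^\alpha}\le M$: indeed $\|z_n\|_{L^2}$ is bounded, $\|(-\Delta)^{\alpha/2}z_n\|_{L^2}=\big\|\,|(-\Delta)^{\alpha/2}z_n|\,\big\|_{L^2}$ is bounded, and this quantity equals, up to a constant, the Gagliardo seminorm $[z_n]_{\alpha}:=\big(\iint_{\mathbb{R}^{2N}}|z_n(x)-z_n(y)|^2|x-y|^{-N-2\alpha}\,dx\,dy\big)^{1/2}$. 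Put $q_0:=2+\tfrac{4\alpha}{N}$; since $\alpha>0$ one checks $2<q_0<\tfrac{2N}{N-2\alpha}$. Once we know $\|z_n\|_{L^{q_0}}\to0$, the general case follows by H\"older interpolation: for $2<q\le q_0$ write $\|z_n\|_{L^q}\le\|z_n\|_{L^2}^{1-\vartheta}\|z_n\|_{L^{q_0}}^{\vartheta}$ with the first factor bounded, and for $q_0\le q<\tfrac{2N}{N-2\alpha}$ write $\|z_n\|_{L^q}\le\|z_n\|_{L^{q_0}}^{1-\vartheta}\|z_n\|_{L^{2N/(N-2\alpha)}}^{\vartheta}$ with the last factor bounded by Lemma~\ref{sobInq} together with $\|z_n\|_{H^\alpha}\le M$. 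So it suffices to prove $\|z_n\|_{L^{q_0}}\to0$.

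Next I would cover $\mathbb{R}^N$ by a countable family of balls $\{B_R(y_i)\}_{i\in\mathbb{N}}$ of the given radius $R$, with centers on a sufficiently fine lattice, so that every point of $\mathbb{R}^N$ lies in at most $N_0=N_0(N)$ of them. The key bookkeeping fact is
\[
\sum_{i}\Big(\|z_n\|_{L^2(B_R(y_i))}^2+[z_n]_{\alpha,B_R(y_i)}^2\Big)\le C(N,R)\,\|z_n\|_{H^\alpha}^2\le CM^2,
\]
where $[\cdot]_{\alpha,\Omega}$ denotes the Gagliardo seminorm computed over $\Omega\times\Omega$. The $L^2$ part is just the bounded‑overlap property; for the seminorm part, since $B_R(y_i)\times B_R(y_i)\subset\{|x-y|<2R\}$, interchanging sum and integral and using the overlap bound gives a constant times $\iint_{|x-y|<2R}|z_n(x)-z_n(y)|^2|x-y|^{-N-2\alpha}\,dx\,dy\le[z_n]_\alpha^2$.

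On each ball $B=B_R(y_i)$ I would combine H\"older's inequality with the local form of the embedding in Lemma~\ref{sobInq} (valid on balls, see \cite{NPV}): interpolating $L^{q_0}$ between $L^2$ and $L^{2N/(N-2\alpha)}$ on $B$ and then estimating $\|z_n\|_{L^{2N/(N-2\alpha)}(B)}\le C\|z_n\|_{W^{\alpha,2}(B)}$, with $C$ independent of $i$ by translation invariance. The choice $q_0=2+\tfrac{4\alpha}{N}$ is precisely the one for which the resulting interpolation exponent $\vartheta$ satisfies $\vartheta q_0=2$, so that
\[
\|z_n\|_{L^{q_0}(B_R(y_i))}^{q_0}\le C\,\|z_n\|_{L^2(B_R(y_i))}^{q_0-2}\big(\|z_n\|_{L^2(B_R(y_i))}^2+[z_n]_{\alpha,B_R(y_i)}^2\big).
\]
Summing over $i$, bounding $\|z_n\|_{L^2(B_R(y_i))}^{q_0-2}$ by $\big(\sup_{y}\int_{B_R(y)}|z_n|^2\big)^{(q_0-2)/2}$, and invoking the previous display,
\[
\|z_n\|_{L^{q_0}(\mathbb{R}^N)}^{q_0}\le\sum_i\|z_n\|_{L^{q_0}(B_R(y_i))}^{q_0}\le CM^2\Big(\sup_{y\in\mathbb{R}^N}\int_{B_R(y)}|z_n(x)|^2\,dx\Big)^{2\alpha/N},
\]
since $q_0-2=\tfrac{4\alpha}{N}$. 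By \eqref{vanishcon} the right‑hand side tends to $0$, so $\|z_n\|_{L^{q_0}}\to0$ and, by the first paragraph, $\|z_n\|_{L^q}\to0$ for every $q\in(2,\tfrac{2N}{N-2\alpha})$.

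The only genuinely delicate step is the displayed bookkeeping estimate: because $(-\Delta)^{\alpha/2}$ is nonlocal, one must verify that the localized Gagliardo seminorms add up (with bounded overlap) to the global one up to a constant, and one must keep the Sobolev constant on $B_R(y_i)$ uniform in $i$. Everything else is a routine covering‑and‑interpolation computation, and the same scheme will serve for the corresponding vanishing lemma needed in the two‑constraint coupled problem.
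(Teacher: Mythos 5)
Your proof is correct, and it is essentially the standard covering-plus-interpolation argument behind Lemma~I.1 of \cite{Lio} that the paper simply delegates to \cite{Lio,ABnls} (``the same argument works for $0<\alpha<1$ with obvious modification''). The one genuinely fractional point -- that the Gagliardo seminorms localized to the balls $B_R(y_i)$ sum, by bounded overlap of $B_R(y_i)\times B_R(y_i)$ inside $\{|x-y|<2R\}$, to at most a constant times the global seminorm, with the local Sobolev constant uniform in $i$ by translation invariance -- is exactly the ``obvious modification,'' and you handle it correctly, including the choice $q_0=2+\tfrac{4\alpha}{N}$ giving $\vartheta q_0=2$ and the final interpolation to all $q\in(2,\tfrac{2N}{N-2\alpha})$.
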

\begin{proof}
The result is a version of Lemma I.1 of \cite{Lio}. See \cite{ABnls} for  a proof in
the case $\alpha=1$ (the same argument works for the case $0<\alpha<1$ with obvious modification).
\end{proof}

The idea behind the proof of relative compactness of minimizing sequence $\{f_n\}_{n\geq 1}$ of $J(\sigma)$ is that, we can employ
the concentration compactness principle to the
sequence of non-negative functions $\rho_n$ defined by $\rho_n=|f_n|^2.$
To do this, for $n=1, 2, \ldots$ and $R>0,$ consider
the associated concentration function
$M_n(R)$ defined by
\[
M_n(R)=\sup_{y\in \mathbb{R}^N}\int_{B_R(y)}\rho_n\ dx,
\]
where $B_R(x)$ stands for the $n$-ball of radius $R$ and
center $x\in \mathbb{R}^N.$ Suppose that evanescence of the energy minimizing $\{f_n\}_{n\geq 1}$ occurs, that is, for all $R>0,$
$\Lim{n\to \infty}M_n(R)=0$
up to a subsequence.
By Lemma~\ref{NegHP}, $\{|f_n|\}_{n\geq 1}$ is bounded. Lemma~\ref{Lemvan} then implies
that $\Lim{n\to \infty}\|f_n\|_{L^r}= 0$ for any
$2<r<\frac{2N}{N-2\alpha}.$ Since $2<\frac{2Np}{N+\beta}<\frac{2N}{N-2\alpha},$ it
follows from \eqref{Mbou3} that
\[
\mathbb{D}_p(f_n)=\int_{\mathbb{R}^N}\mathcal{I}_\beta^{p}(f_n)|f_n|^{p}\ dx\leq C \|f_n\|_{L^{2Np/(N+\beta)} }^{2p} \to 0,
\]
as $n\to \infty.$
Consequently,
we obtain that
\[
J(\sigma) = \lim_{n\to \infty}J(f_n)\geq \liminf_{n\to \infty}\int_{\mathbb{R}^N}|(-\Delta)^{\alpha/2}|^2\ dx\geq 0,
\]
which contradicts $J(\sigma) <0.$
Thus, we conclude from the
concentration compactness principle (see Lemma~I.1 of \cite{Lio}) that one of the remaining
two alternatives, namely dichotomy or compactness, is the only
option here.
In what follows, for every $\sigma>0$ and any minimizing sequence $\{f_n\}_{n\geq 1}\subset H^\alpha(\mathbb{R}^N)$ of $J(\sigma),$ we denote
\begin{equation}\label{LLdef}
L=\lim_{R\to \infty}\left( \lim_{n\to \infty} M_n(R)\right)\in [0, \sigma].
\end{equation}
Thus, we have established the following lemma.
\begin{lem}
If $\{f_n\}_{n\geq 1}\subset H^\alpha(\mathbb{R}^N)$ be any minimizing sequence for $J(\sigma)$ and $L$ be defined by \eqref{LLdef}, then $L>0.$
\end{lem}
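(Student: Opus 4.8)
The plan is to rule out the vanishing alternative $L=0$ by contradiction, essentially repackaging the discussion that precedes the statement. Suppose $L=0$. Since each $M_n(\cdot)$ is nondecreasing and $0\le M_n(R)\le \|f_n\|_{L^2}^2$ is bounded, a diagonal extraction lets us pass to a subsequence (not relabelled) along which $\lim_{n\to\infty}M_n(R)$ exists for every $R>0$; the hypothesis $L=\lim_{R\to\infty}\lim_{n\to\infty}M_n(R)=0$ then forces $\lim_{n\to\infty}M_n(R)=0$ for each fixed $R>0$, i.e. the vanishing condition \eqref{vanishcon} holds with $z_n=f_n$. By Lemma~\ref{NegHP}(i) the sequence $\{f_n\}$ is bounded in $H^\alpha(\mathbb{R}^N)$, so both $\{f_n\}$ and $\{|(-\Delta)^{\alpha/2}f_n|\}$ are bounded in $L^2(\mathbb{R}^N)$, and Lemma~\ref{Lemvan} is applicable.

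First I would apply Lemma~\ref{Lemvan} to conclude $\|f_n\|_{L^r}\to 0$ for every $r\in(2,2N/(N-2\alpha))$. The two exponents that matter here are $s$ and $2Np/(N+\beta)$, and the assumptions \eqref{AssPo} place both of them in this interval: $2<s<2+\tfrac{4\alpha}{N}<\tfrac{2N}{N-2\alpha}$, while $p\ge 2>1+\tfrac{\beta}{N}$ gives $2<\tfrac{2Np}{N+\beta}$ and $p<\tfrac{N+2\alpha+\beta}{N}$ gives $\tfrac{2Np}{N+\beta}<\tfrac{2N}{N-2\alpha}$ (the computation recorded in the Remark). Hence $\|f_n\|_{L^s}\to 0$ and $\|f_n\|_{L^{2Np/(N+\beta)}}\to 0$, so the estimate \eqref{Estp17} yields $\|f_n\|_{L^s}^s\to 0$ and the Hardy--Littlewood--Sobolev estimate \eqref{Mbou3} yields $\mathbb{D}_p(f_n)=\int_{\mathbb{R}^N}\mathcal{I}_\beta^p(f_n)|f_n|^p\,dx\to 0$. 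Now I would use the energy splitting $J(f_n)=\tfrac12\|(-\Delta)^{\alpha/2}f_n\|_{L^2}^2-b\|f_n\|_{L^s}^s-\mu\,\mathbb{D}_p(f_n)$ with $b=a/s\ge 0$ and $\mu=\lambda/(2p)\ge 0$: the first term is nonnegative, and since $a+\lambda>0$, whichever of $b,\mu$ is positive multiplies a term tending to $0$ (the other term being absent). Therefore $J(\sigma)=\lim_{n\to\infty}J(f_n)\ge 0$, contradicting $J(\sigma)<0$ from Lemma~\ref{NegHP}(ii). Thus $L>0$.

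I do not expect a genuine obstacle: the statement is exactly the vanishing step of Lions' concentration--compactness principle dressed up for this functional. The only points requiring care are (a) verifying that $s$ and $2Np/(N+\beta)$ really fall inside $(2,2N/(N-2\alpha))$, where Lemma~\ref{Lemvan} is effective, which is precisely what the ranges in \eqref{AssPo} (and the Remark) ensure, and (b) organizing the case distinction on whether $a$ or $\lambda$ vanishes, although the single inequality $J(\sigma)\ge 0$ above handles all three cases at once. For Theorem~\ref{arbiTHM} the same argument works verbatim, using the weak-Young estimate from the Remark in place of \eqref{Mbou3} and summing over the finitely many nonlinear terms.
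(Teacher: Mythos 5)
Your argument is correct and is essentially the paper's own proof: rule out vanishing by combining the boundedness from Lemma~\ref{NegHP}(i), the vanishing lemma (Lemma~\ref{Lemvan}) applied to the exponents $s$ and $2Np/(N+\beta)$, the estimates \eqref{Estp17} and \eqref{Mbou3}, and the strict negativity $J(\sigma)<0$. You are in fact slightly more explicit than the paper (which only spells out the Choquard term and the exponent check for $2Np/(N+\beta)$), but the route is the same.
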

The remaining two
possibilities are $L\in (0, \sigma)$ (dichotomy) and $L=\sigma$ (compactness). The next step toward the proof of the relative compactness of minimizing sequences is to show that, we must have $L=\sigma.$

\smallskip

Before we describe how energy minimizing
sequences $\{f_n\}_{n\geq 1}$ of $J(\sigma)$ would behave in the case when
$L \in (0, \sigma),$
we need the following result.
\begin{lem}\label{CEst}
Let $D^\alpha=(-\Delta)^{\alpha/2}, 0<\alpha<1.$
If $f,g\in \mathcal{S}(\mathbb{R}^N),$ then
\[
\|\left[D^\alpha,f\right]g\|_{L^2}\leq C_1\|\nabla f\|_{L^{r_1}}\|D^{\alpha-1}g\|_{L^{s_1}}+C_2\|D^\alpha f\|_{L^{r_2}}\|g\|_{L^{s_2}},
\]
where $\left[X, Y \right]=XY-YX$ represents the commutator, $s_1, s_2\in [2,\infty),$ and
\[
\frac{1}{r_1}+\frac{1}{s_1}=\frac{1}{r_2}+\frac{1}{s_2}=\frac{1}{2}.
\]
\end{lem}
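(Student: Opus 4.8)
The plan is to prove Lemma~\ref{CEst} by a Littlewood--Paley/paraproduct decomposition, in the spirit of the classical commutator estimates of Kato--Ponce and Kenig--Ponce--Vega. Since $f,g\in\mathcal{S}(\mathbb{R}^N)$, every manipulation below is legitimate; moreover the right-hand side is finite because for Schwartz $g$ the Riesz potential $D^{\alpha-1}g$ decays like $|x|^{-(N-1+\alpha)}$, which lies in $L^{s_1}$ for $s_1\ge 2$ and $N\ge 2$. One begins by writing the commutator on the Fourier side as a bilinear multiplier,
\[
\widehat{[D^{\alpha},f]g}(\xi)=\int_{\mathbb{R}^N}\bigl(|\xi|^{\alpha}-|\eta|^{\alpha}\bigr)\,\widehat{f}(\xi-\eta)\,\widehat{g}(\eta)\,d\eta,
\]
and decomposing $f=\sum_j\Delta_j f$ and $g=\sum_k\Delta_k g$ into dyadic pieces. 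The double sum is split into three regions according to whether the frequency of $f$ is much smaller than, comparable to, or much larger than that of $g$: region~(I) $2^j\ll 2^k$, region~(II) $2^j\sim 2^k$, region~(III) $2^j\gg 2^k$.

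In region~(III) ($f$ at high frequency) one has $|\xi-\eta|\sim 2^j$ and $|\eta|\lesssim 2^{j-2}$ on the relevant support, hence $|\xi|\sim 2^j$ and $\bigl||\xi|^{\alpha}-|\eta|^{\alpha}\bigr|\lesssim|\xi|^{\alpha}\sim 2^{\alpha j}$. After absorbing the factor $2^{\alpha j}$ into $D^{\alpha}f$, what remains is a Coifman--Meyer symbol of order zero localized to the standard low--high configuration, so the usual paraproduct estimate (Bernstein inequalities, the Fefferman--Stein maximal inequality, and H\"older with $\frac1{r_2}+\frac1{s_2}=\frac12$) yields a contribution $\lesssim\|D^{\alpha}f\|_{L^{r_2}}\|g\|_{L^{s_2}}$. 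In region~(I) ($f$ at low frequency) one has instead $|\xi-\eta|\lesssim 2^{k-2}$ and $|\eta|\sim 2^k$, so $|\xi|\sim 2^k$ and the segment joining $\xi$ to $\eta$ stays in the annulus $\{|\zeta|\sim 2^k\}$; applying the mean value theorem to $\zeta\mapsto|\zeta|^{\alpha}$, whose gradient has size $\sim 2^{(\alpha-1)k}$ there, gives $\bigl||\xi|^{\alpha}-|\eta|^{\alpha}\bigr|\lesssim|\xi-\eta|\,2^{(\alpha-1)k}$. Absorbing $|\xi-\eta|$ into $\nabla f$ and $2^{(\alpha-1)k}$ into $D^{\alpha-1}g$ (admissible because $D^{\alpha-1}$, though of negative order, is here applied only to frequency-localized pieces), one is left with a high--low paraproduct whose sum over $k$ converges all the more readily thanks to the factor $2^{(\alpha-1)k}$ with $\alpha-1<0$; this produces a contribution $\lesssim\|\nabla f\|_{L^{r_1}}\|D^{\alpha-1}g\|_{L^{s_1}}$.

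Finally, in region~(II) (the resonant part, $2^j\sim 2^k$) the output frequency $|\xi|$ may be much smaller than $2^j$. Here one uses $\bigl||\xi|^{\alpha}-|\eta|^{\alpha}\bigr|\le\bigl||\xi|-|\eta|\bigr|^{\alpha}\le|\xi-\eta|^{\alpha}\lesssim 2^{\alpha j}$, valid for $0<\alpha\le 1$, and places the factor $2^{\alpha j}$ on the $f$-piece, so that the resonant term is dominated by $\sum_j\widetilde{\Delta}_j\bigl((D^{\alpha}\Delta_j f)\,\widetilde{\Delta}_j g\bigr)$; almost orthogonality together with Bernstein (using $s_2\ge 2$) and H\"older bound this by $\lesssim\|D^{\alpha}f\|_{L^{r_2}}\|g\|_{L^{s_2}}$. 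Summing the three contributions gives the stated inequality, with $C_1,C_2$ depending only on $N$, $\alpha$, and the exponents. By density of $\mathcal{S}(\mathbb{R}^N)$ the estimate then extends to the natural function classes if needed.

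I expect region~(II) to be the main obstacle: it is the only place where the output and input frequencies genuinely decouple, so that one cannot simply match $D^{\alpha}$ on the commutator with $D^{\alpha}$ on a factor, and it is precisely where the strict positivity $\alpha>0$ is needed to make the dyadic sum converge. A secondary technical point is the appearance of the negative-order operator $D^{\alpha-1}$ in region~(I): one must argue that it is harmless because it only ever acts on high-frequency Littlewood--Paley pieces, on which it is bounded on every $L^p$, $1<p<\infty$, with norm comparable to $2^{(\alpha-1)k}$.
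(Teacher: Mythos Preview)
Your sketch is correct and follows the standard paraproduct/Littlewood--Paley route to Kato--Ponce type commutator estimates; the three-region decomposition, the mean-value argument in region~(I), and the concavity bound $||\xi|^\alpha-|\eta|^\alpha|\le|\xi-\eta|^\alpha$ in region~(II) are exactly the ingredients one expects, and your remark on the $L^{s_1}$-integrability of $D^{\alpha-1}g$ for Schwartz $g$ is a nice sanity check.

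By way of comparison, the paper does not actually prove this lemma: its ``proof'' consists of the single sentence that the estimate is a variant of the Kato--Ponce commutator estimate and that a proof is given in \cite{Guo} (with the original idea in \cite{Kato}). So you have supplied precisely what the paper delegates to the literature, and your argument is in the same spirit as those references. The only place I would tighten the write-up is region~(II): the output of the diagonal paraproduct is not frequency-localized to $|\xi|\sim 2^j$ but to $|\xi|\lesssim 2^j$, so the notation $\widetilde{\Delta}_j$ there should be a low-pass operator, and the $L^2$ bound on the resulting sum is obtained via the square-function characterization (or duality) rather than by simple almost-orthogonality of disjoint frequency shells. This is a cosmetic point and does not affect the validity of the argument.
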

\begin{proof}
This is a variant of the commutator estimate result of Kato and Ponce in \cite{Kato} and a proof is
given in \cite{Guo}.
\end{proof}

\begin{lem}\label{revLem}
Suppose that $f_n\in H^\alpha(\mathbb{R}^N), \|f_n\|_{L^2}^2\to \sigma, J(f_n)\to J(\sigma)$ and
$L\in (0,\sigma),$ where $L$ is as defined in \eqref{LLdef}.
For some subsequence of $\{f_n\}_{n\geq 1},$ which we continue to
denote by the same, the following are true: For every $\varepsilon>0,$ there exists
$n_0\in \mathbb{N}$ and sequences $\{v_n\}_{n\geq 1}$ and $\{w_n\}_{n\geq 1}$ in $H^\alpha(\mathbb{R}^N)$
such that for every $n\geq n_0,$

\smallskip

$1.$ ${\displaystyle  \int_{\mathbb{R}^N}|v_n(x)|^2\ dx \in (L-\varepsilon, L+\varepsilon) }$

\smallskip

$2.$ ${\displaystyle \int_{\mathbb{R}^N}|w_n(x)|^2\ dx \in (\sigma-L-\varepsilon, \sigma-L+\varepsilon) }$

\smallskip

$3.$ ${\displaystyle J(f_n)\geq J(v_n)+J(w_n)-\varepsilon }.$
\end{lem}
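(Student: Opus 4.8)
The plan is to implement the standard ``dichotomy'' step of the concentration–compactness method. Since $L\in(0,\sigma)$, the concentration functions $M_n(R)$ split the mass of $\rho_n=|f_n|^2$ into two pieces that drift apart, and we build $v_n$ (the ``inner'' piece) and $w_n$ (the ``outer'' piece) by cutting $f_n$ with smooth radial cut-offs. Concretely, fix $\varepsilon>0$; by definition of $L$ in \eqref{LLdef} there is $R_0>0$ and, for $n$ large, a point $y_n\in\mathbb{R}^N$ with $\int_{B_{R_0}(y_n)}\rho_n\,dx$ close to $L$, while $\int_{B_{2R_n}(y_n)\setminus B_{R_0}(y_n)}\rho_n\,dx\to0$ for a suitable sequence $R_n\to\infty$ (this is the usual ``no mass in the annulus'' consequence of the definition of $L$; one chooses $R_n$ so that the annular mass is $<\varepsilon$). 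Translating so that $y_n=0$, pick $\phi,\psi\in C_c^\infty(\mathbb{R}^N)$ radial with $0\le\phi,\psi\le1$, $\phi\equiv1$ on $B_{R_0}$, $\mathrm{supp}\,\phi\subset B_{2R_0}$, $\psi\equiv0$ on $B_{R_0}$, $\psi\equiv1$ outside $B_{2R_0}$, and set $v_n=\phi\,f_n$, $w_n=\psi(\cdot/\theta_n)f_n$ for an appropriate dilation $\theta_n$ matched to $R_n$. Parts $1$ and $2$ of the conclusion are then immediate from the choice of the cut-offs and the smallness of the annular mass.

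The substance is part $3$, the near-superadditivity of the energy $J$. Writing $J=\frac12\|(-\Delta)^{\alpha/2}\cdot\|_{L^2}^2-b\|\cdot\|_{L^s}^s-\mu\,\mathbb{D}_p(\cdot)$, I would treat the three terms separately. For the local terms the supports of $v_n$ and $w_n$ are essentially disjoint (they overlap only on the annulus $B_{2R_0}\setminus B_{R_0}$, where $f_n$ carries mass $<\varepsilon$), so $\|v_n\|_{L^s}^s+\|w_n\|_{L^s}^s=\|f_n\|_{L^s}^s+o_\varepsilon(1)$ by the fractional Gagliardo–Nirenberg inequality controlling the $L^s$-contribution of the small-mass annular part. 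For the Coulomb term, since the kernel $|x-y|^{\beta-N}>0$ one has $\mathbb{D}_p(v_n+w_n)\ge\mathbb{D}_p(v_n)+\mathbb{D}_p(w_n)$ up to the overlap and cross terms; the cross terms $\iint D_p(|v_n|,|w_n|)$ are bounded, via the Hardy–Littlewood–Sobolev inequality, by $\||v_n|^p\|_{L^{2N/(N+\beta)}}\||w_n|^p\|_{L^{2N/(N+\beta)}}$, and since $v_n,w_n$ are uniformly bounded in $H^\alpha$ while their supports separate (the distance between $B_{2R_0}$ and the support of $w_n$ tends to $\infty$), these cross terms vanish as $n\to\infty$ — here one uses that the Riesz kernel decays, so long-range interaction between two bounded, far-separated densities is small. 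The annular overlap contributes $o_\varepsilon(1)$ as before.

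The key quantitative point — and the main obstacle — is the kinetic term: for the \emph{nonlocal} seminorm $\|(-\Delta)^{\alpha/2}u\|_{L^2}^2=c_{N,\alpha}\iint|u(x)-u(y)|^2|x-y|^{-N-2\alpha}\,dx\,dy$, truncation by a cut-off is not local, so $\|(-\Delta)^{\alpha/2}(\phi f_n)\|_{L^2}^2+\|(-\Delta)^{\alpha/2}(\psi_{\theta_n}f_n)\|_{L^2}^2$ is not automatically $\le\|(-\Delta)^{\alpha/2}f_n\|_{L^2}^2+o(1)$. This is where Lemma~\ref{CEst} (the Kato–Ponce-type commutator estimate) enters: I would write $(-\Delta)^{\alpha/2}(\phi f_n)=\phi\,(-\Delta)^{\alpha/2}f_n+[(-\Delta)^{\alpha/2},\phi]f_n$ and bound the commutator in $L^2$ by $C\|\nabla\phi\|_{L^{r_1}}\|D^{\alpha-1}f_n\|_{L^{s_1}}+C\|D^\alpha\phi\|_{L^{r_2}}\|f_n\|_{L^{s_2}}$, which is $O(1)$ and, after the dilation for $w_n$ (whose cut-off has gradient of size $O(1/\theta_n)$), can be made $o(1)$; then an elementary expansion $\int(\phi^2+\psi_{\theta_n}^2)|(-\Delta)^{\alpha/2}f_n|^2\le\int|(-\Delta)^{\alpha/2}f_n|^2$ — valid once $\phi^2+\psi_{\theta_n}^2\le1$, which we may arrange by the construction of the cut-offs — closes the kinetic estimate. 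Combining the three terms and absorbing all the error contributions (annular mass, commutator, long-range Coulomb cross terms) into a single $\varepsilon$ yields $J(f_n)\ge J(v_n)+J(w_n)-\varepsilon$ for $n\ge n_0$, which is exactly conclusion $3$. (When $a>0$, $\lambda$ may be $0$ and the Coulomb analysis is vacuous; when $\lambda>0$, $a$ may be $0$ and the $L^s$-term is absent; in all cases the argument above specializes.)
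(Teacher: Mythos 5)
Your overall construction is the same in spirit as the paper's: cut $f_n$ with smooth cut-offs centered where $M_n$ nearly attains $L$, read off Statements 1 and 2 from the mass bounds, and prove Statement 3 by handling the local and Coulomb terms with Hardy--Littlewood--Sobolev plus Gagliardo--Nirenberg interpolation on the small-mass region, and the fractional kinetic term with the commutator estimate of Lemma~\ref{CEst}. Your variant (inner cut-off at a fixed scale $R_0$, outer cut-off pushed out to a scale $\theta_n\to\infty$ chosen by a diagonal argument) is legitimate and even simplifies the Coulomb cross terms, which then vanish by support separation since the kernel is bounded by $d_n^{-(N-\beta)}$ on far-separated supports; the paper instead keeps both cut-offs at the fixed scales $R$ and $2R$ and absorbs every error into the annulus $B_{2R}\setminus B_R$. (Minor point: your claim that the annular mass tends to $0$ is more than the definition of $L$ gives for fixed $\varepsilon$; what you get, and all you need, is that it is at most $C\varepsilon$ for $n$ large.)

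The genuine gap is in the kinetic estimate for the inner piece $v_n=\phi f_n$. You concede that $\|[(-\Delta)^{\alpha/2},\phi]f_n\|_{L^2}$ is only $O(1)$ and claim smallness only ``after the dilation for $w_n$''; but the dilation does nothing for $\phi$, whose gradient lives at the fixed scale $R_0$ fixed in your first paragraph. An $O(1)$ commutator error only yields $\|(-\Delta)^{\alpha/2}v_n\|_{L^2}^2\le\int\phi^2|(-\Delta)^{\alpha/2}f_n|^2\,dx+O(1)$, which is useless for the inequality $J(f_n)\ge J(v_n)+J(w_n)-\varepsilon$, since the cross term with the commutator cannot be absorbed. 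The repair is exactly the paper's device: because $\lim_{n}M_n(R)\in(L-\varepsilon,L]$ for \emph{every} $R\ge R_0$, you may place the inner cut-off at a scale $R=R(\varepsilon)$ as large as you wish; then $\|\nabla\phi_R\|_{L^\infty}\sim R^{-1}$, and choosing the exponents in Lemma~\ref{CEst} with $r_2>N/\alpha$ (so that $s_2<2N/(N-2\alpha)$ and $\|f_n\|_{L^{s_2}}$ stays bounded by the fractional Sobolev embedding) gives $\|D^\alpha\phi_R\|_{L^{r_2}}\sim R^{N/r_2-\alpha}\to0$, so the commutator contribution is at most $C\varepsilon$ for both cut-offs, uniformly in $n$. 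With that adjustment the rest of your argument goes through and reproduces the paper's conclusion.
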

\begin{proof}
Given $\varepsilon>0,$ by definition of $L,$ there exists $R_0$ such
that if $R>R_0,$ then $L-\varepsilon<\Lim{n\to \infty}M_n(R)\leq L.$ Therefore, after extracting
a subsequence of $\{M_n\}$ if needed, we can say that there exist $n_0\in \mathbb{N}$ such that for all $n\geq n_0,$
\[
L-\varepsilon<M_n(R)\leq M_n(2R)<L+\varepsilon.
\]
It then follows that for every $n\geq n_0,$ there exists $y_n\in \mathbb{R}^N$ such that
\begin{equation}\label{GeqLem}
L-\varepsilon <\int_{B_{R}(y_n)}\rho_n\ dx \leq \int_{B_{2R}(y_n)}\rho_n\ dx <L+\varepsilon.
\end{equation}
Now introduce smooth cut-off functions $\phi$ and $\psi,$ defined on $\mathbb{R}^N,$ such that
$\phi(x) \equiv 1$ for $|x|\leq 1;$ $\phi(x)\equiv 0$ for $|x|\geq 2;$ $\psi(x)\equiv 1$ for $|x|\geq 2$; and $\psi(x)\equiv 0$ for $|x|\leq 1.$
Denote by $\phi_R$ and $\psi_R$ the functions $\phi_R(x)=\phi(x/R)$ and $\psi_R(x)=\psi(x/R),$ respectively.
Define $v_n(x)=\phi_R(x-y_n)f_n(x)$ and $w_n(x)=\psi_R(x-y_n)f_n(x).$ With these definitions together with \eqref{GeqLem},
Statements 1 and 2 are clear. To prove Statement 3, using Lemma~\ref{CEst} with $r_1=\infty, s_1=2,$ and $s_2=1+\frac{4\alpha}{N},$ one obtains that
\[
\begin{aligned}
\int_{\mathbb{R}^N}|[(-\Delta)^{\alpha/2},\widetilde{\phi}_R]f_n|^2\ dx& \leq C_1\|\nabla \widetilde{\phi}_R\|_{\infty}\|(-\Delta)^{(\alpha-1)/2}f_n\|_{L^2}\\
&+ C_2 R^{\alpha(N-2\alpha)/(N+2\alpha) } \|(-\Delta)^{\alpha/2}\widetilde{\phi}\|_{L^{2(N+2\alpha)/(2\alpha-N)}}\|f_n\|_{H^\alpha},
\end{aligned}
\]
where, for ease of notation, we denote $\widetilde{\phi}_R(x)=\phi_R(x-y_n)$ for $x\in \mathbb{R}^N.$
Using the fact $\|\nabla \widetilde{\phi}_{R}\|_\infty=\|\nabla \widetilde{\phi}\|_\infty / R,$ it immediately
follows from what we have just obtained that
\begin{equation}\label{CEst2}
\int_{\mathbb{R}^N}|(-\Delta)^{\alpha/2}(\widetilde{\phi}_R f_n)|^2\ dx\leq \int_{\mathbb{R}^N}(\widetilde{\phi}_R)^2|(-\Delta)^{\alpha/2} f_n|^2\ dx +C\varepsilon
\end{equation}
for sufficiently large $R.$ Similarly, we have the following estimate
\begin{equation}\label{CEst7}
\int_{\mathbb{R}^N}|(-\Delta)^{\alpha/2}(\widetilde{\psi}_R f_n)|^2\ dx\leq \int_{\mathbb{R}^N}(\widetilde{\psi}_R)^2|(-\Delta)^{\alpha/2} f_n|^2\ dx +C\varepsilon,
\end{equation}
where $\widetilde{\psi}_R(x)=\psi_R(x-y_n)$ for $x\in \mathbb{R}^N.$
Taking into account of $\phi_R^2+\psi_R^2 \equiv 1$ on $\mathbb{R}^N$ and using the estimates \eqref{CEst2} and \eqref{CEst7}, a direct
computation yields
\begin{equation}\label{CEst14}
\begin{aligned}
J(v_n)& +J(w_n)  \leq b \int_{\mathbb{R}^N}\left( \widetilde{\phi}_R^2(x)+\widetilde{\psi}_R^2(x)\right)|f_n|^s\ dx-b \int_{\mathbb{R}^N}|\widetilde{\phi}_Rf_n|^s\ dx \\
& -b \int_{\mathbb{R}^N}|\widetilde{\psi}_Rf_n|^s\ dx + J(f_n)+
\mu \iint_{\mathbb{R}^N}\left( \widetilde{\phi}_R^2(x)+\widetilde{\psi}_R^2(x)\right) D_p(|f_n|, |f_n|)\ dxdy\\
& - \mu \iint_{\mathbb{R}^N}D_p(|\widetilde{\phi}_Rf_n|, |\widetilde{\phi}_Rf_n|)dxdy-\mu \iint_{\mathbb{R}^N}D_p(|\widetilde{\psi}_Rf_n|, |\widetilde{\psi}_Rf_n|)dxdy.
\end{aligned}
\end{equation}
Let us denote by $\mathbf{M}_b^s[f_n]$ the first three
integrals on the right-hand side of \eqref{CEst14} and by $\mathbf{F}_\mu^p[f_n]$ the last three integrals
on the right-hand side of \eqref{CEst14}.
Now let $\mathbb{B}_R=B(y_k,2R)-B(y_k,R)$, $2_{\alpha}^\ast=2N/(N-2\alpha)$, and $\lambda^\prime=N(s-2)/2\alpha s.$
Then, it follows from the Interpolation inequality that
\begin{equation}\label{Zinqu1}
\begin{aligned}
 \mathbf{M}_b^s[f_n]& \leq b \int_{\mathbb{R}^N}|\widetilde{\phi}_R^2-|\widetilde{\phi}_R|^s ||f_n|^s\ dx+ b \int_{\mathbb{R}^N}|\widetilde{\psi}_R^2-|\widetilde{\psi}_R|^s ||f_n|^s\ dx \\
 & \leq 4b \int_{\mathbb{B}}|f_n|^s\ dx\leq C \|f_n\|_{L^2(\mathbb{B}_R)}^{(1-\lambda^\prime)s }
 \|f_n\|_{L^{2_\alpha^\ast}(\mathbb{B}_R)}^{\lambda^\prime s }\leq C \|f_n\|_{H^\alpha(\mathbb{B}_R)}^{\lambda^\prime s}\|f_n\|_{L^2(\mathbb{B}_R)}^{(1-\lambda^\prime)s }\leq C\varepsilon.
\end{aligned}
\end{equation}
Similarly, if
$\mu^\prime=(Np-N-\beta)/2\alpha p,$ then it follows from
\eqref{Mbou3} that
\begin{equation}\label{Zinqu}
\mathbf{F}_\mu^p[f_n] \leq C \|f_n\|_{H^\alpha(\mathbb{B}_R)}^{2p \mu^\prime}\|f_n\|_{L^2(\mathbb{B}_R)}^{2p(1-\mu^\prime)}\leq C\varepsilon,
\end{equation}
where $C$ is independent of $R$ and $n.$ Finally, taking into account
of the estimates \eqref{Zinqu1} and \eqref{Zinqu}, Statement 3 follows from \eqref{CEst14}.
\end{proof}

\begin{lem}\label{revInsubadd}
If $0<L<\sigma,$ then $J(\sigma)\geq J(L)+J(\sigma-L).$
\end{lem}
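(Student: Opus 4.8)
The plan is to exploit the splitting constructed in Lemma~\ref{revLem}. Fix $\varepsilon>0$, and let $\{v_n\}$ and $\{w_n\}$ be the sequences provided by that lemma, so that for all $n\geq n_0$ we have $\|v_n\|_{L^2}^2\in(L-\varepsilon,L+\varepsilon)$, $\|w_n\|_{L^2}^2\in(\sigma-L-\varepsilon,\sigma-L+\varepsilon)$, and $J(f_n)\geq J(v_n)+J(w_n)-\varepsilon$. The idea is to rescale $v_n$ and $w_n$ so that their $L^2$-norms become exactly $L$ and $\sigma-L$, respectively, estimate how much the energy changes under this rescaling, and then pass to the limit.

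First I would renormalize: set $\hat v_n=(L/\|v_n\|_{L^2}^2)^{1/2}v_n$ and $\hat w_n=((\sigma-L)/\|w_n\|_{L^2}^2)^{1/2}w_n$, so $\hat v_n\in\Sigma_L$ and $\hat w_n\in\Sigma_{\sigma-L}$, hence $J(\hat v_n)\geq J(L)$ and $J(\hat w_n)\geq J(\sigma-L)$ by definition of the infima. The multiplicative factors are of the form $1+O(\varepsilon)$ (uniformly in $n$, since $L\in(0,\sigma)$ is fixed away from the endpoints and $\varepsilon$ is small), so I would compare $J(v_n)$ with $J(\hat v_n)$ term by term. Each term of the functional $J$ is homogeneous in $v_n$ of some fixed degree ($2$ for the kinetic term, $s$ for the power term, $2p$ for the Coulomb term $\mathbb{D}_p$), so rescaling by $1+O(\varepsilon)$ changes each term by a multiplicative factor $1+O(\varepsilon)$; combined with the uniform bound $\|v_n\|_{H^\alpha}\leq B$ from Lemma~\ref{NegHP}(i) (which also bounds $\|v_n\|_{L^s}^s$ and $\mathbb{D}_p(v_n)$ via the Gagliardo--Nirenberg and Hardy--Littlewood--Sobolev estimates \eqref{Estp17}, \eqref{Mbou3}), this yields $|J(v_n)-J(\hat v_n)|\leq C\varepsilon$ with $C$ independent of $n$ and $\varepsilon$. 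The same applies to $w_n$.

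Putting these together, for all $n\geq n_0$,
\[
J(f_n)\geq J(v_n)+J(w_n)-\varepsilon\geq J(\hat v_n)+J(\hat w_n)-C\varepsilon\geq J(L)+J(\sigma-L)-C\varepsilon.
\]
Letting $n\to\infty$ gives $J(\sigma)=\lim_n J(f_n)\geq J(L)+J(\sigma-L)-C\varepsilon$, and since $\varepsilon>0$ was arbitrary, $J(\sigma)\geq J(L)+J(\sigma-L)$, as claimed.

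The main obstacle is the uniform (in $n$) control of the energy perturbation caused by renormalizing the $L^2$-norms: one must be sure that the correction factors are $1+O(\varepsilon)$ with a constant not depending on $n$, and that the nonlinear terms $\|v_n\|_{L^s}^s$ and $\mathbb{D}_p(v_n)$ — as well as the corresponding terms for $w_n$ — stay bounded uniformly in $n$. Both follow from Lemma~\ref{NegHP}(i) together with the continuity estimates \eqref{Estp17} and \eqref{Mbou3} applied to $v_n=\widetilde\phi_R f_n$ and $w_n=\widetilde\psi_R f_n$ (noting $\|v_n\|_{H^\alpha},\|w_n\|_{H^\alpha}\leq C\|f_n\|_{H^\alpha}$, which itself needs the commutator estimates \eqref{CEst2}, \eqref{CEst7} already used inside Lemma~\ref{revLem}), so no genuinely new input is required beyond careful bookkeeping of the $\varepsilon$'s.
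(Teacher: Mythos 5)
Your proposal is correct and follows essentially the same route as the paper: both renormalize the pieces $v_n,w_n$ from Lemma~\ref{revLem} onto the exact spheres $\Sigma_L$ and $\Sigma_{\sigma-L}$ by factors of size $1+O(\varepsilon)$, use the uniform $H^\alpha$ bounds to show the energy changes by at most $C\varepsilon$, invoke the definitions of $J(L)$ and $J(\sigma-L)$ as infima, and pass to the limit in $n$ and then in $\varepsilon$ (the paper organizes the last step as a diagonal extraction with $\varepsilon=1/k$, which is only a cosmetic difference).
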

\begin{proof}
First observe that if a function $v$ satisfies $\left\vert \int_{\mathbb{R}^N}|v|^2\ dx- L \right\vert<\varepsilon,$ then we have that
$\int_{\mathbb{R}^N}|\eta v|^2\ dx= L,$ where $\eta=(L/\|v\|_{L^2}^2)^{1/2}$ satisfies $|\eta-1|<A_1\varepsilon$
with $A_1>0$ independent of $v$ and $\varepsilon.$ Thus
\[
J(\sigma) \leq J(\eta v)\leq J(v)+A_2\varepsilon,
\]
where the constant $A_2$ depends only on $A_1$ and the power $\|v\|_{L^2}^2.$ A similar estimate
holds for the function $w$ such that
\[
\left\vert \int_{\mathbb{R}^N}|w|^2\ dx- (\sigma-L) \right\vert<\varepsilon.
\]
Taking into account of these observations and Lemma~\ref{revLem}, it follows immediately
that there exists a subsequence $\{f_{n_k}\}_{k\geq 1}$ of $\{f_n\}_{n\geq 1}$ and
corresponding functions $v_{n_k}$ and $w_{n_k}$ for $k=1,2, \ldots$ such that for all $k,$
\[
\begin{aligned}
  J(v_{n_k})& \geq J(L) - \frac{1}{k},\
 J(w_{n_k})\geq J(\sigma-L)-\frac{1}{k},\ \mathrm{and} \\
 & J(f_{n_k})\geq J(v_{n_k})+J(w_{n_k})-\frac{1}{k},
 \end{aligned}
\]
and
so
\begin{equation}\label{revinqE}
J(v_{n_k})\geq J(L) + J(\sigma-L)-\frac{1}{k}.
\end{equation}
Passing the limit as $k\to \infty$ on both sides of \eqref{revinqE} yields the desired result.
\end{proof}

\begin{lem}\label{singleDi}
If $\{f_n\}$ be any minimizing sequence of $J(\sigma),$ then $L\not\in (0, \sigma).$
\end{lem}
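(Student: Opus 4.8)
The plan is to rule out the dichotomy alternative $L\in(0,\sigma)$ by proving the strict subadditivity inequality
\[
J(\sigma) < J(\theta) + J(\sigma-\theta)\qquad\text{for every }\theta\in(0,\sigma),
\]
which is incompatible with Lemma~\ref{revInsubadd}. Concretely, if $L\in(0,\sigma)$ then Lemma~\ref{revInsubadd} gives $J(\sigma)\geq J(L)+J(\sigma-L)$, contradicting the displayed inequality with $\theta=L$; hence $L\notin(0,\sigma)$.

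The heart of the matter is the scaling inequality $J(t\sigma')<t\,J(\sigma')$ for every $t>1$ and every $\sigma'>0$. To prove it, I would take a minimizing sequence $\{f_n\}$ for $J(\sigma')$ with $\|f_n\|_{L^2}^2=\sigma'$ (any minimizing sequence can be rescaled onto $\Sigma_{\sigma'}$ by a factor tending to $1$, using continuity of $J$), set $g_n=t^{1/2}f_n\in\Sigma_{t\sigma'}$, and use the homogeneities $\|(-\Delta)^{\alpha/2}g_n\|_{L^2}^2=t\|(-\Delta)^{\alpha/2}f_n\|_{L^2}^2$, $\|g_n\|_{L^s}^s=t^{s/2}\|f_n\|_{L^s}^s$, $\mathbb{D}_p(g_n)=t^p\mathbb{D}_p(f_n)$ to obtain
\[
J(g_n) = t\,J(f_n) - b\big(t^{s/2}-t\big)\|f_n\|_{L^s}^s - \mu\big(t^p-t\big)\mathbb{D}_p(f_n).
\]
Since $s>2$ and $p\geq 2$, both exponents $s/2$ and $p$ exceed $1$, so $t^{s/2}-t>0$ and $t^p-t>0$. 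If $\lambda>0$ then $\mu>0$ and Lemma~\ref{PoLem1} gives $\mathbb{D}_p(f_n)\geq\delta>0$ for all large $n$; if instead $\lambda=0$ then $a>0$ (as $a+\lambda>0$), so $b>0$ and Lemma~\ref{PoLem1} gives $\|f_n\|_{L^s}^s\geq\delta>0$. In either case $J(g_n)\leq t\,J(f_n)-c(t)\,\delta$ with $c(t)>0$, and since $g_n\in\Sigma_{t\sigma'}$ we get $J(t\sigma')\leq J(g_n)\leq t\,J(f_n)-c(t)\delta$; letting $n\to\infty$ yields $J(t\sigma')\leq t\,J(\sigma')-c(t)\delta<t\,J(\sigma')$.

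From the scaling inequality the subadditivity follows by the standard argument: for $\theta\in(0,\sigma)$, applying it with $(\sigma',t)=(\theta,\sigma/\theta)$ and with $(\sigma',t)=(\sigma-\theta,\sigma/(\sigma-\theta))$ gives $\frac{\theta}{\sigma}J(\sigma)<J(\theta)$ and $\frac{\sigma-\theta}{\sigma}J(\sigma)<J(\sigma-\theta)$; adding these produces $J(\sigma)<J(\theta)+J(\sigma-\theta)$, and combining with Lemma~\ref{revInsubadd} completes the proof. The main obstacle is the scaling inequality, and within it the fact that the nonlinear part of the energy stays bounded away from zero along minimizing sequences — this is precisely the role of Lemma~\ref{PoLem1}, which in turn rests on $J(\sigma)<0$ from Lemma~\ref{NegHP}. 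The only bookkeeping subtlety is tracking whether $\lambda>0$ or $a>0$ to select the nonlinearity supplying the strict gain, and the harmless rescaling needed because one only knows $\|f_n\|_{L^2}^2\to\sigma'$.
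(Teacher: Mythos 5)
Your argument is correct and arrives at the same final contradiction as the paper (strict subadditivity against Lemma~\ref{revInsubadd} with $\theta=L$), but the way you obtain the strict inequality $J(\sigma)<J(\theta)+J(\sigma-\theta)$ is genuinely different. You prove the strict sub-homogeneity $J(t\sigma')<t\,J(\sigma')$ for every $t>1$ by scaling a minimizing sequence on $\Sigma_{\sigma'}$ and using Lemma~\ref{PoLem1} to keep the relevant nonlinear term ($\mathbb{D}_p(f_n)$ when $\lambda>0$, $\|f_n\|_{L^s}^s$ when $\lambda=0<a$) bounded below by $\delta$, which furnishes the quantitative gain $c(t)\delta$; strict subadditivity then follows by the classical Cazenave--Lions step of adding the two scaled inequalities. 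The paper deliberately avoids this route (see the Remark embedded in its proof): it takes minimizing sequences $\{z_n\}$ for $J(\sigma')$ and $\{w_n\}$ for $J(\sigma'')$, compares the limiting energies per unit mass $K_1$ and $K_2$, and when $K_1\neq K_2$ transfers all the mass onto the sequence of lower density via $F_n=T^{1/2}z_n$, $T=(\sigma'+\sigma'')/\sigma'$, using only the soft bound $J(F_n)\leq T\,J(z_n)$; the quantitative gain from Lemma~\ref{PoLem1} is needed only in the balanced case $K_1=K_2$. Your route yields the cleaner and stronger intermediate statement (sub-homogeneity at every mass and every $t>1$) and would also cover the multi-nonlinearity functional $\widetilde{J}$, since the negativity of the infimum always forces at least one nonlinear term to stay away from zero; the paper's density-comparison scheme is organized so that, as the author notes, it carries over essentially unchanged to the two-constraint problem of Section~\ref{coupledS}, where a single scaling parameter cannot by itself handle the vector constraint. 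Both proofs ultimately rest on the same two inputs, $J(\sigma')<0$ from Lemma~\ref{NegHP} and the nondegeneracy of the nonlinear term along minimizing sequences from Lemma~\ref{PoLem1}, so your proposal is a valid, somewhat more standard alternative to the argument in the paper.
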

\begin{proof}
Suppose to the contrary that $L\in (0, \sigma).$ Let $\sigma^\prime=L$ and $\sigma^{\prime \prime}=\sigma-L.$
Then, $\sigma=\sigma^\prime+\sigma^{\prime \prime}.$
Lemma~\ref{revInsubadd} implies that
$J(\sigma)\geq J(L)+J(\sigma-L),$ which gives
\begin{equation}\label{subaddSHT}
J(\sigma^\prime+\sigma^{\prime \prime})\geq J(\sigma^\prime)+J(\sigma^{\prime \prime}).
\end{equation}
To deduce a contradiction, we now claim the function $\sigma\mapsto J(\sigma)$ is
strictly subadditive, i.e., for all $\sigma^\prime>0$ and $\sigma^{\prime \prime}>0,$
\begin{equation}\label{subSING}
J(\sigma^\prime+\sigma^{\prime \prime})< J(\sigma^\prime)+J(\sigma^{\prime \prime}).
\end{equation}

\begin{remark}
All results proved above remain true for the problem $\widetilde{J}(\sigma).$
The proof of \eqref{subSING} below differs from the original ideas developed in \cite{Lio, Lio2}.
The advantage of this technique is that the same argument goes through unchanged
to prove an analogue strict inequality for the problem
$\widetilde{J}(\sigma)$ related to equations with an arbitrary number of combined
nonlinearities.
\end{remark}
To see \eqref{subSING}, let $\{z_n\}_{n\geq 1}$ and $\{w_n\}_{n\geq 1}$ be sequences of functions in $H^\alpha(\mathbb{R}^N)$ such that
\[
\|z_n\|_{L^2}^2\to \sigma^\prime,\ J(z_n)\to J(\sigma^{\prime}),\ \|w_n\|_{L^2}^2\to \sigma^{\prime \prime},\ J(w_n)\to J(\sigma^{\prime \prime})
\]
as $n\to \infty.$ Consider the sequence $\{(K_1^n, K_2^n)\}_{n\geq 1}\subset \mathbb{R}^2$ defined by
\[
\begin{aligned}
& K_1^n=\frac{1}{\|z_n\|_{L^2}^2}\left( \frac{1}{2}\int_{\mathbb{R}^N}|(-\Delta)^{\alpha/2} z_n|^2\ dx -b\|z_n\|_{L^s}^s- \mu \mathbb{D}_{p}(z_n)\right), \\
& K_2^n= \frac{1}{\|w_n\|_{L^2}^2}\left(\frac{1}{2}\int_{\mathbb{R}^N}|(-\Delta)^{\alpha/2} w_n|^2\ dx - b\|w_n\|_{L^s}^s- \mu \mathbb{D}_{p}(w_n) \right).
\end{aligned}
\]
By passing to a subsequence, we can assume that $(K_1^n, K_2^n)\to (K_1,K_2)$ in $\mathbb{R}^2.$ Three
cases are possible: $K_1<K_2, K_1>K_2,$ and $K_1=K_2.$
Suppose first that $K_1<K_2.$ Define a sequence $\{F_n\}_{n\geq 1}$ in $H^\alpha(\mathbb{R}^N)$ by
$F_n=T^{1/2}z_n,$
where $T=(\sigma^\prime+\sigma^{\prime \prime})/\sigma^{\prime}.$
Then $F_n\to \sigma^\prime+\sigma^{\prime \prime}$ and consequently, $J(\sigma^{\prime}+\sigma^{\prime \prime})\leq \Lim{n\to \infty}J(F_n).$
A straightforward calculation gives
\begin{equation}\label{stAddIn}
J(F_n)=\frac{T}{2} \|(-\Delta)^{\alpha /2}z_n\|_{L^2}^2-bT^{s/2}\|z_n\|_{L^s}^s- \mu T^{p}\iint_{\mathbb{R}^N} D_{p}(|z_n|, |z_n|)\ dxdy.
\end{equation}
Since $T>1$, $p-1>0$, $s-2>0,$ and $a, \lambda$ are nonnegative with $a+\lambda>0,$ it follows from \eqref{stAddIn}
that $J(F_n)\leq T J(z_n).$ This then implies that
\[
J(\sigma^{\prime}+\sigma^{\prime \prime})\leq T \lim_{n\to \infty}J(z_n)=T \sigma^\prime K_1.
\]
Put $\delta=\sigma^{\prime \prime}(K_2-K_1).$ Since $K_1<K_2,$ we have $\delta>0.$ It follows from the inequality
we have just obtained that
$ J(\sigma^\prime+\sigma^{\prime \prime})\leq \sigma^{\prime}K_1+\sigma^{\prime \prime}K_2-\delta.$ Thus, we obtain the
strict inequality, $J(\sigma^\prime+\sigma_2)<J(\sigma^\prime)+J(\sigma^{\prime \prime}),$
 which contradicts \eqref{subaddSHT}.

\smallskip

Using the similar argument as in the case $K_1<K_2$, the case $K_1>K_2$ also leads to a contradiction and will not be repeated.
Finally, consider the case $K_1=K_2.$ As in
the preceding paragraph, define $F_n=T^{1/2}z_n.$
If both $a$ and $\lambda$ are positive or $a=0$ and $\lambda>0,$
using Lemma~\ref{PoLem1}, there exists $\delta>0$ such that
\[
\frac{1}{2}\|(-\Delta)^{\alpha/2}F_n\|_{L^2}^2-\mu \mathbb{D}_p(F_n)\leq T \left( \frac{1}{2}\|(-\Delta)^{\alpha/2}z_n\|_{L^2}^2-\mu \mathbb{D}_p(z_n)\right)-\delta.
\]
for sufficiently large $n.$ This in turn implies that
\begin{equation}\label{extraIne1}
J(F_n)\leq T J(z_n)|_{a=0}-b T^{s/2}\|z_n\|_{L^s}^s-\delta,
\end{equation}
where $b=a/s.$ In this case, since $T>1, b\geq 0, \lambda>0, $ and $s-2>0,$
it follows from \eqref{extraIne1} that $J(F_n)\leq T J(z_n)-\delta$
for sufficiently large $n.$
Thus, we obtain that
\begin{equation}\label{saddLast}
J(\sigma^{\prime }+\sigma^{\prime \prime})\leq  T \lim_{n\to \infty}J(z_n)-\delta = T K_1\sigma^{\prime} -\delta=K_1\sigma^{\prime}+\sigma^{\prime \prime}K_1-\delta
\end{equation}
Since the equality $K_1=K_2$ holds, \eqref{saddLast} gives the desired contradiction.
If $a>0$ and $\lambda=0,$ then making use of Lemma~\ref{PoLem1} again, there exists $\delta>0$ such that
\[
\frac{1}{2}\|(-\Delta)^{\alpha/2}F_n\|_{L^2}^2-b \|F_n\|_{L^s}^s \leq T \left( \frac{1}{2}\|(-\Delta)^{\alpha/2}z_n\|_{L^2}^2-b \|z_n\|_{L^s}^s\right)-\delta.
\]
for sufficiently large $n.$ Then, we again obtain that $J(F_n)\leq T J(z_n)-\delta$ for
sufficiently large $n$ and \eqref{saddLast} gives the desired contradiction.
\end{proof}

\begin{lem}\label{shiftLem}
Suppose the case $L=\sigma.$ Then there exists $\{y_n\}\subset \mathbb{R}^N$ such that
\begin{itemize}
\item[$1.$] for any $\Lambda<\sigma$ there exists a number $R=R(\Lambda)>0$ and $n_0=n_0(\Lambda)\in \mathbb{N}$ such that for all $n\geq n_0,$
\begin{equation}\label{ExiIn1}
\int_{B_R(y_n)}|f_n|^2\ dx>\Lambda.
\end{equation}
\item[$2.$] the shifted sequence $\widetilde{f}_n(x)=f_n(x-y_n)$, $x\in \mathbb{R}^N,$ converges
(up to a subsequence) in $H^\alpha(\mathbb{R}^N)$ to some $u\in P_\sigma.$
\end{itemize}
\end{lem}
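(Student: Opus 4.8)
The plan is to treat the two statements in turn: the first by a non-splitting argument for the concentration functions, the second by a standard weak-compactness-plus-semicontinuity argument. Write $\rho_n=|f_n|^2$ and recall $M_n(R)=\sup_{y\in\mathbb{R}^N}\int_{B_R(y)}\rho_n\,dx$; after passing once to a subsequence we may assume $\ell(R):=\lim_{n\to\infty}M_n(R)$ exists for each $R>0$, and by hypothesis $L=\lim_{R\to\infty}\ell(R)=\sigma$. For Statement~1 I would first fix the sequence $\{y_n\}$ once and for all: since $\ell(R)\uparrow\sigma$, choose $R_0$ with $\ell(R_0)>\sigma/2$, so that $M_n(R_0)>\sigma/2$ for large $n$ and one may pick $y_n$ with $\int_{B_{R_0}(y_n)}\rho_n\,dx>\sigma/2$ (taking $y_n$ arbitrary for the finitely many remaining indices). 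Given $\Lambda\in(\sigma/2,\sigma)$ (the case $\Lambda\le\sigma/2$ being handled by $R_0$ itself), pick $R_1\ge R_0$ with $\ell(R_1)>\tfrac{1}{2}(\Lambda+\sigma)$ and, for large $n$, a point $z_n$ with $\int_{B_{R_1}(z_n)}\rho_n\,dx>\tfrac{1}{2}(\Lambda+\sigma)$. If $|z_n-y_n|>R_0+R_1$ the balls $B_{R_0}(y_n)$ and $B_{R_1}(z_n)$ are disjoint, forcing $\|f_n\|_{L^2}^2>\sigma/2+\tfrac{1}{2}(\Lambda+\sigma)=\sigma+\Lambda/2$, which (since $\Lambda>0$) contradicts $\|f_n\|_{L^2}^2\to\sigma$ for $n$ large. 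Hence $|z_n-y_n|\le R_0+R_1$, so $B_{R_1}(z_n)\subset B_{2R_1+R_0}(y_n)$ and $\int_{B_{2R_1+R_0}(y_n)}\rho_n\,dx>\Lambda$; taking $R(\Lambda)=2R_1+R_0$ proves Statement~1. This non-splitting step, producing a single translation sequence that captures almost all of the mass at every level $\Lambda<\sigma$, is the point I expect to be the main obstacle.

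For Statement~2 I would pass to the translated sequence $\widetilde f_n(x)=f_n(x+y_n)$ (the sequence in the statement up to the sign of $y_n$, which is immaterial by translation invariance). Since every term in $J$ and the $L^2$-norm are translation invariant, $\{\widetilde f_n\}$ is again a minimizing sequence with $\|\widetilde f_n\|_{L^2}^2\to\sigma$, and by Lemma~\ref{NegHP} it is bounded in $H^\alpha(\mathbb{R}^N)$; hence, along a subsequence, $\widetilde f_n\rightharpoonup u$ weakly in $H^\alpha(\mathbb{R}^N)$ and $\widetilde f_n\to u$ in $L^2_{\mathrm{loc}}(\mathbb{R}^N)$ by the compact embedding of Lemma~\ref{sobInq}. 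Statement~1 then gives $\int_{B_{R(\Lambda)}(0)}|u|^2\,dx=\lim_{n}\int_{B_{R(\Lambda)}(0)}|\widetilde f_n|^2\,dx\ge\Lambda$ for every $\Lambda<\sigma$, so $\|u\|_{L^2}^2\ge\sigma$; combined with weak lower semicontinuity of the $L^2$-norm this forces $\|u\|_{L^2}^2=\sigma$, i.e. $u\in\Sigma_\sigma$, and then weak convergence together with convergence of the norms gives $\widetilde f_n\to u$ strongly in $L^2(\mathbb{R}^N)$.

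From here I would interpolate between $L^2$ and the $H^\alpha$-bound via the fractional Gagliardo-Nirenberg inequality to obtain $\widetilde f_n\to u$ in $L^r$ for every $2\le r<2N/(N-2\alpha)$; in particular $\|\widetilde f_n\|_{L^s}^s\to\|u\|_{L^s}^s$, and, since $2<\tfrac{2Np}{N+\beta}<\tfrac{2N}{N-2\alpha}$, the Hardy-Littlewood-Sobolev inequality yields $\mathbb{D}_{p}(\widetilde f_n)\to\mathbb{D}_{p}(u)$. By weak lower semicontinuity of $\|(-\Delta)^{\alpha/2}\cdot\|_{L^2}^2$ we then get $J(u)\le\liminf_{n\to\infty}J(\widetilde f_n)=J(\sigma)$, while $u\in\Sigma_\sigma$ forces $J(u)\ge J(\sigma)$; hence $J(u)=J(\sigma)$, i.e. $u\in P_\sigma$. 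Finally, to promote convergence to the $H^\alpha$-norm I would observe that $\|(-\Delta)^{\alpha/2}\widetilde f_n\|_{L^2}^2=2\bigl(J(\widetilde f_n)+b\|\widetilde f_n\|_{L^s}^s+\mu\,\mathbb{D}_{p}(\widetilde f_n)\bigr)\to2\bigl(J(u)+b\|u\|_{L^s}^s+\mu\,\mathbb{D}_{p}(u)\bigr)=\|(-\Delta)^{\alpha/2}u\|_{L^2}^2$; since also $(-\Delta)^{\alpha/2}\widetilde f_n\rightharpoonup(-\Delta)^{\alpha/2}u$ in $L^2$, this convergence of norms makes the convergence strong in $L^2$, which together with $\widetilde f_n\to u$ in $L^2$ yields $\widetilde f_n\to u$ in $H^\alpha(\mathbb{R}^N)$, as required.
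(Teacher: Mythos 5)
Your proposal is correct and follows essentially the same route as the paper: a mass-capture (non-splitting) argument for Statement 1 using two balls whose disjointness would exceed the total $L^2$ mass, followed by boundedness in $H^\alpha$, local compactness, strong $L^2$ convergence of the shifted sequence, convergence of the $L^s$ and Coulomb terms via interpolation and Hardy--Littlewood--Sobolev, weak lower semicontinuity to conclude $u\in P_\sigma$, and convergence of norms to upgrade weak to strong $H^\alpha$ convergence. The only differences are cosmetic: you handle $\|f_n\|_{L^2}^2\to\sigma$ (rather than $=\sigma$) a bit more carefully in Statement 1, and you replace the paper's Cantor diagonalization by the equivalent weak-limit-plus-norm-convergence argument in $L^2$.
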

\begin{proof}
Statement 1 is standard; we include the proof here for the readers convenience.
Since $L=\sigma,$ there exists $R_0$ and $n_0(R_0)\in \mathbb{N}$ such that for all $n\geq n_0(R_0),$ one has
$
M_n(R_0)>\sigma/2.
$
Consequently, by the definition of $M_n,$ we can find $\{y_n\}\subset \mathbb{R}^N$ such that
$
\|f_n\|_{L^2(B_{R_0}(y_n) ) }^2>\sigma/2
$
for sufficiently large $n.$
Next let $\Lambda<\sigma$ be given. We may assume that $\Lambda>\frac{\sigma}{2}.$
Since $L=\sigma,$ we can find a number $R_0(\Lambda)$ and $n_0(\Lambda)\in \mathbb{N}$ such that if $n\geq n_0(\Lambda),$ then
\begin{equation}\label{ExiIn2}
\int_{B_{R_0(\Lambda)}(y_n(\Lambda))}|f_n|^2\ dx >\Lambda
\end{equation}
for some point $y_n(\Lambda)\in \mathbb{R}^N.$ Since the power $\|f_n\|_{L^2}^2=\sigma$ for each $n,$ it follows
that $B_{R_0}(y_n)\cap B_{R_0(\Lambda)}(y_n(\Lambda))\neq \emptyset,$ i.e.,
$|y_n(\Lambda)-y_n|\leq R_0+R_0(\Lambda)$ for $\Lambda>\sigma/2.$
 Now define $R=R(\Lambda)=2R_0(\Lambda)+R_0,$ then
we have that $B_{R_0(\Lambda)}(y_n(\Lambda))\subset B_R(y_n),$ and so, \eqref{ExiIn1} follows
from \eqref{ExiIn2} for all $n\geq n_0(\Lambda).$

\smallskip

Statement 1 now ensures that for all $k\in \mathbb{N},$ there exists $R_k$ such that for sufficiently large $n,$ we have that
$
\|\widetilde{f}_n\|_{L^2(B_{R_k}(0))}^2>L-1/k.
$
Since $\{\widetilde{f}_n\}$ is uniformly
bounded in $H^\alpha(\mathbb{R}^N),$ and therefore
also in $H^\alpha(\Omega)$ for any bounded domain $\Omega$ in $\mathbb{R}^N.$
Thus, from Rellich-type lemma (see for example, Corollary~7.2 of \cite{NPV}), it follows that $\{\widetilde{f}_n\}$
converges in $L^2(\Omega)$ norm (up to a subsequence) to some $u\in L^2(\Omega)$ satisfying
$
\|u\|_{L^2(B_{R_k}(0))}^2>L-1/k.
$
By applying the standard Cantor diagonalization argument together with the fact
that $\|\widetilde{f}_n\|_{L^2}^2=\sigma, \forall n,$ it can be shown that
up to a subsequence $\widetilde{f}_n \to u$ in $L^2(\mathbb{R}^N)$ satisfying $\|u\|_{L^2}^2=\sigma.$
We now write
\begin{equation}\label{conIn23}
\begin{aligned}
\mathbb{D}_{p}(\widetilde{f}_n)-\mathbb{D}_{p}(u) & =  \iint_{\mathbb{R}^N}\frac{|\widetilde{f}_n(x)|^{p}}{|x-y|^{N-\beta}}\left( |\widetilde{f}_n(y)|^{p}-|u(y)|^{p} \right) \ dxdy \\
& + \iint_{\mathbb{R}^N}\frac{|u(y)|^{p}}{|x-y|^{N-\beta} }\left( |\widetilde{f}_n(x)|^{p} -|u(x)|^{p} \right) \ dx dy.
\end{aligned}
\end{equation}
Applying
the Hardy-Littlewood-Sobolev inequality and using the
fact
that $\{\widetilde{f}_n\}_{n\geq 1}$ is bounded in $H^\alpha(\mathbb{R}^N),$ the first term on the
right-hand side of \eqref{conIn23} satisfies
\begin{equation}\label{ConvInq}
\left|\mathbf{I}_1\right\vert=\left\vert \iint_{\mathbb{R}^N}\frac{|\widetilde{f}_n(x)|^{p}}{|x-y|^{N-\beta}}\left( |\widetilde{f}_n(y)|^{p}-|u(y)|^{p} \right) \ dxdy \right\vert
\leq C \||\widetilde{f}_n|^p-|u|^p\|_{L^{2N/(N+\beta)}}.
\end{equation}
Next, for any two numbers $S$ and $T$ in $\mathbb{R},$ one has for $p\geq 1,$
\begin{equation}\label{ConvInq3}
\begin{aligned}
\left\vert |T|^{p-1}T-|S|^{p-1}S \right\vert& =\left\vert \int_{0}^{1}\frac{d}{dt}|S+t(T-S)|^{p-1}(S+t(T-S))\ dt \right\vert \\
& \leq p|T-S|\int_{0}^{1}|S+t(T-S)|^{p-1}\ dt \\
& \leq p|T-S|\int_{0}^{1} \left(t|T|^{p-1}+(1-t)|S|^{p-1} \right)\ dt\\
& =\frac{p}{2}|T-S|\left(|T|^{p-1}+|S|^{p-1} \right).
\end{aligned}
\end{equation}
Using \eqref{ConvInq3} into \eqref{ConvInq} and applying H\"{o}lder inequality, we obtain that
\[
\begin{aligned}
\left\vert \mathbf{I}_1\right\vert& \leq C \left(\int_{\mathbb{R}^N}\left(|\widetilde{f}_n|^{p-1}+|u|^{p-1} \right)^{\frac{2N}{N+\beta}}|\widetilde{f}_n-u|^{ \frac{2N}{N+\beta}}\ dx \right)^{\frac{N+\beta}{2N} }\\
& \leq C \left( \int_{\mathbb{R}^N}\left(|\widetilde{f}_n|^{ \frac{2Np}{N+\beta}}+|u|^{\frac{2Np}{N+\beta} } \right)\ dx \right)^{\rho} \|\widetilde{f}_n-u \|_{L^{\frac{2Np}{N+\beta} }}\leq C \|\widetilde{f}_n-u \|_{L^{\frac{2Np}{N+\beta} }}
\end{aligned}
\]
where $\rho= \frac{N+\beta}{2N}\left(1-\frac{1}{p}\right).$
Now, using the standard Interpolation inequality and the fractional Sobolev inequality, it follows that
\begin{equation}\label{ConvInq5}
\begin{aligned}
\left\vert \mathbf{I}_1 \right\vert
 \leq C \|\widetilde{f}_n-u \|_{L^2}^{\lambda^\prime} \|\widetilde{f}_n-u \|_{L^{\frac{2N}{N-2\alpha} } }^{1-\lambda^\prime}\leq C \|\widetilde{f}_n-u \|_{L^2}^{\lambda^\prime},
\end{aligned}
\end{equation}
where $\lambda^\prime=(N+\beta-Np+2p\alpha)/2p\alpha.$
The right-hand side of \eqref{ConvInq5} goes to zero since $\widetilde{f}_n\to u$ in $L^2.$
Similarly, the second term on the right-hand side of \eqref{conIn23} goes to zero.
Thus, we have that
$\Lim{n\to \infty}\mathbb{D}_{p}(\widetilde{f}_n)=\mathbb{D}_{p}(u).$ Using another application of the Interpolation inequality, one obtains that
\[
\|\widetilde{f}_n-u\|_{L^{s}}\leq \|\widetilde{f}_n-u\|_{L^{2}}^{\lambda^{\prime \prime} } \|\widetilde{f}_n-u\|_{L^{2N/(N-2\alpha)}}^{1-\lambda^{\prime \prime}}\leq C \|\widetilde{f}_n-u\|_{L^{2}}^{\lambda^{\prime \prime}},
\]
where $\lambda^{\prime \prime}=(2N-sN+2\alpha s)/2\alpha s.$ Thus, $\Lim{n\to \infty}\|\widetilde{f}_n\|_{L^s}=\|u\|_{L^s}.$
Furthermore, as a consequence of the weak lower semi-continuity of the norm in a Hilbert space, we can assume,
by extracting another
subsequence if necessary, that $\widetilde{f}_n\rightharpoonup u$ weakly in $H^\alpha,$ and that
\[
\|u\|_{H^\alpha}\leq \liminf_{n\to \infty}\|\widetilde{f}_n\|_{H^\alpha}.
\]
It follows then that
\[
J(u)\leq \lim_{n\to \infty}J(\widetilde{f}_n)=J(\sigma),
\]
and since $\widetilde{f}_n\to f$ in $L^2(\mathbb{R}^N),$ we also
have that $\|u\|_{L^2}^2=\Lim{n\to \infty}\|\widetilde{f}_n\|_{L^2}^2=\sigma.$
By the definition of the infimum $J(\sigma),$ we must have $J(u)=J(\sigma)$ and $u\in \Sigma_\sigma.$ Finally, the facts
$J(u)=\Lim{n\to \infty}J(\widetilde{f}_n)$, $\|u\|_{L^s}=\Lim{n\to \infty}\|\widetilde{f}_n\|_{L^s}$,
$\mathbb{D}_{p}(u)=\Lim{n\to \infty}\mathbb{D}_{p}(\widetilde{f}_n),$ and
$\|u\|_{L^2}=\Lim{n\to \infty}\|\widetilde{f}_n\|_{L^2}$ together
imply that $\|u\|_{H^\alpha}=\Lim{n\to \infty}\|\widetilde{f}_n\|_{H^\alpha},$ and
from a standard exercise in the elementary Hilbert space theory one then obtains that $\widetilde{f}_n\to u$ in $H^\alpha$ norm.
\end{proof}

We can now prove our main results. Since we ruled out the cases $L=0$ and $L\in (0, \sigma),$ the
only option for any minimizing sequence of $J(\sigma)$ is the compactness, i.e., $L=\sigma.$
Hence, by Lemma~\ref{shiftLem}, the set $P_\sigma$ is
nonempty and Statement 1 of Theorem~\ref{Mainpower} holds.

\smallskip

Next suppose that $u\in H^\alpha(\mathbb{R}^N)$ is a minimizer of $J(\sigma),$ that is, $\|u\|_{L^2}^2=\sigma$ and $J(u)=J(\sigma).$
Then it satisfies the Euler-Lagrange differential equation
\begin{equation}\label{lagra3}
(-\Delta)^\alpha u-a|u|^{s-2}u-\lambda \left( |x|^{\beta-N}\star |u|^p \right) |u|^{p-2}u=-\omega u
\end{equation}
for some $\omega\in \mathbb{R}.$ This gives that
\begin{equation}\label{lagra4}
\|(-\Delta)^{\alpha/2}\|_{L^2}^2-a\|u\|_{L^s}^s-\lambda \mathbb{D}_p(u)=-\omega \|u\|_{L^2}^2.
\end{equation}
We know that $J(u)<0.$ Since $a=bs>b$ and $\lambda=2\mu p>\mu,$ it follows that the
left side of \eqref{lagra4} is negative and so, $\omega>0.$

\smallskip

We now show that $|u|, |u|^\ast\in P_\sigma$ (for definition and properties about
symmetric rearrangements, see, for example, Chapter 3 of \cite{Lieb}).
Using the fact
\begin{equation}\label{symPO2}
u\in H^\alpha(\mathbb{R}^N)\Rightarrow |u|\in H^\alpha(\mathbb{R}^N),\ \|(-\Delta)^{\alpha/2}|u|\|_{L^2}\leq \|(-\Delta)^{\alpha/2}u\|_{L^2},
\end{equation}
it follows that $J(|u|)\leq J(u).$
Thus, $P_\sigma$ also contains $|u|$ and hence, the minimizer $u$ can be chosen to be $\mathbb{R}$-valued.
To prove $|u|^\ast\in P_\sigma,$ we need the following fact
\begin{equation}\label{symPO1}
u\in H^\alpha(\mathbb{R}^N)\Rightarrow |u|^\ast \in H^\alpha(\mathbb{R}^N),\ \|(-\Delta)^{\alpha/2}|u|^\ast\|_{L^2}\leq \|(-\Delta)^{\alpha/2}|u|\|_{L^2}
\end{equation}
This is proved in Lemmas~3.4 and 3.5 of \cite{ABS} for $\alpha=1/2$ and such a proof for the
case $0<\alpha<1$ can be constructed by adapting the same argument.
Moreover, it is well-known that the symmetric rearrangement preserve the $L^p$ norm, i.e.,
\begin{equation}\label{LpSym}
\||f|^\ast\|_{L^r} =\|f\|_{L^r},\ 1\leq r \leq \infty.
\end{equation}
Furthermore, a classical rearrangement inequality
of F. Riesz-S.~L.~Sobolev (see for example, Theorem~3.7 of \cite{Lieb}) gives
\begin{equation}\label{symPO2}
\iint_{\mathbb{R}^N}D_p(|u|^\ast, |u|^\ast )\ dxdy
\geq \iint_{\mathbb{R}^N}D_p(|u|, |u| )\ dxdy .
\end{equation}
Taking into account of \eqref{symPO1}, \eqref{LpSym}, and \eqref{symPO2}, it follows that
\[
\||f|^\ast \|_{L^2}^2=\|f\|_{L^2}^2\ \ \mathrm{and}\ J(|f|^\ast)\leq J(f),\ \forall f\in H^\alpha(\mathbb{R}^N),
\]
which shows that $P_\sigma$ contains $|u|^\ast$ whenever it does $u.$

\smallskip

To show that $|u|>0$ on $\mathbb{R}^N,$ observe that $\widetilde{u}=|u|\in \Sigma_\sigma$ satisfies the
Euler-Lagrange differential equation
\begin{equation}\label{lagra9}
(-\Delta)^\alpha \widetilde{u} +\omega \widetilde{u}=f(\widetilde{u}),\ \mathrm{where}\ f(\widetilde{u})=a|\widetilde{u}|^{s-2}\widetilde{u}+\lambda ( |x|^{\beta-N}\star |\widetilde{u} |^p)|\widetilde{u}|^{p-2}\widetilde{u}.
\end{equation}
The Lagrange multiplier in \eqref{lagra9} stays same because it is determined by \eqref{lagra3}.
Since $\omega>0,$ we have the convolution formula
\[
\widetilde{u}=\mathcal{W}_\omega \star f(\widetilde{u})=\int_{\mathbb{R}^N}\mathcal{W}_\omega(x-x^\prime)f(\widetilde{u})(x^\prime)\ dx^\prime,
\]
where for any $\tau>0,$ the Bessel kernel $\mathcal{W}_\tau(x)$ is  given by
\begin{equation}\label{lagra11}
\mathcal{W}_\tau(x)=\mathcal{F}^{-1}\left( \frac{1}{\tau +|\xi|^{2\alpha}} \right).
\end{equation}
Since $\widetilde{u}$ is the
convolution of $\mathcal{W}_\omega$ with the function $f(\widetilde{u})$ which is nonnegative and not identically
zero, it follows that $\widetilde{u}>0$ on $\mathbb{R}^N.$

\smallskip

To prove Statement 2 of Theorem~\ref{Mainpower}, suppose it does not hold. Then there would
exist a subsequence $\{f_{n_k}\}$ of $\{f_n\}$ and $\varepsilon_0>0$ such that
\[
\inf_{u\in P_\sigma}\inf_{y\in \mathbb{R}^N}\|f_{n_k}(\cdot + y)-u\|_{H^\alpha} \geq \varepsilon_0
\]
for all $k\in \mathbb{N}.$ But since $\{f_{n_k}\}$ would itself enjoy being a
minimizing sequence for $J(\sigma).$ Consequently, there would exist $\{y_k\}\subset \mathbb{R}^N$ and $u_0\in P_\sigma$ such that
\[
\lim_{k\to \infty}\|f_{n_k}(\cdot + y_k)-u_0\|_{H^\alpha}=0,
\]
which is a contradiction and hence, Statement 2 follows.
Because of translation invariance of the functional $J(u)$ and the power $\int_{\mathbb{R}^N}|u|^2\ dx,$
Statement 3 is an immediate consequence of Statement 2.

\smallskip

To prove the stability of the set $P_\sigma,$ suppose the contrary. Then there would exist $\varepsilon_0>0,$ a sequence $\{v_n\}\subset H^\alpha(\mathbb{R}^N),$ and times $t_n$ enjoying
\begin{equation}
\inf_{h\in P_\sigma}\|v_n - h\|_{H^\alpha}<\frac{1}{n}
\end{equation}
and
\[
\inf_{h\in P_\sigma}\|u_n(\cdot, t_n)-h \|_{H^\alpha}\geq \varepsilon_0
\]
for all $n,$ where $u_n(x,t)$ solves the equation \eqref{NSpower} with $u_n(x,0)=v_n.$
Now taking into account of the convergence $v_n\to P_\sigma$ in $H^\alpha(\mathbb{R}^N),$
and $J(h)=J(\sigma), \|h\|_{L^2}^2=\sigma$ for $h\in P_\sigma,$ we would have
$J(v_n)\to J(\sigma)$ and $\|v_n\|_{L^2}^2\to \sigma.$ Take the numbers $\{\zeta_n\}\subset \mathbb{R}$ such that $\|\zeta_nv_n\|_{L^2}^2=\sigma$ for
all $n.$ Then $\zeta_n\to 1.$ Let us denote $w_n=\zeta_nu_n(\cdot, t_n).$ Then $w_n\in \Sigma_\sigma$ for each $n$ and
\[
\lim_{n\to \infty}J(w_n)=\lim_{n\to \infty}J(u_n(\cdot,t_n))=\lim_{n\to \infty}J(v_n)=J(\sigma).
\]
Thus, $\{w_n\}$ enjoys being a minimizing sequence of $J(\sigma).$ In consequence, by Theorem~\ref{Mainpower}, there would exists $h_n\in P_\sigma$ such that
$\|w_n-h_n\|_{H^\alpha}<\varepsilon_0/2.$ But then
\[
\begin{aligned}
\varepsilon_0\leq \|u_n(\cdot, t_n)-h_n \|_{H^\alpha}& \leq \|u_n(\cdot, t_n)-w_n \|_{H^\alpha}+\|w_n-h_n \|_{H^\alpha} \\
&\leq |1-\zeta_n|\|u_n(\cdot,t_n)\|_{H^\alpha}+\frac{\varepsilon_0}{2},
\end{aligned}
\]
which after passing the limit $n\to \infty$ yields $\varepsilon_0\leq \varepsilon_0/2,$ a contradiction.

\section{Standing waves for Choquard type systems }\label{coupledS}

In this section, we prove existence and stability of standing waves for the coupled fNLS system with Choquard-type nonlinearities
\begin{equation}\label{CfNLSp}
\left\{
\begin{aligned}
  & i\partial_t \Psi_1+(-\Delta)^\alpha \Psi_1= \lambda_1 \mathcal{I}_\beta^{p_1}(\Psi_1) |\Psi_1|^{p_1-2}\Psi_1 +c \mathcal{I}_\beta^{q}(\Psi_2) |\Psi_1|^{q-2}\Psi_1,\\
   & i\partial_t \Psi_2+(-\Delta)^\alpha \Psi_2= \lambda_2\mathcal{I}_\beta^{p_2}(\Psi_2) |\Psi_2|^{p_2-2}\Psi_2+c \mathcal{I}_\beta^{q}(\Psi_1) |\Psi_2|^{q-2}\Psi_2,
\end{aligned}
\right.
\end{equation}
where $(x,t)\in \mathbb{R}^N\times (0,\infty)$ and $\lambda_i, c>0.$ Throughout this section, we assume that the following conditions hold for
the powers $\alpha, \beta, p_1, p_2,$ and $q.$
\begin{equation}\label{Cfhypo1}
N\geq 2,\ 0<\alpha<1,\ \beta\in (0, N),\ 2\leq p_1, p_2, q < \frac{N+2\alpha+\beta}{N}.
\end{equation}
A standing wave solution of \eqref{CfNLSp} is a solution of the form
\[
(\Psi_1(x,t), \Psi_2(x,t))=(e^{-i\omega_1 t} u_1(x), e^{-i\omega_2 t} u_2(x))
\]
for some $\omega_1, \omega_2\in \mathbb{R}$
and $(u_1,u_2)$ solves the Choquard system \eqref{CfODEp}.
The associated
energy functional is
\[
E(u)=\frac{1}{2}\sum_{j=1}^2\|(-\Delta)^{\alpha/2}u_j\|_{L^2}^2-\sum_{j=1}^2\mu_j\mathbb{D}_{p_j}(u_j)-\mu \mathbb{D}_q(u_1,u_2),
\]
where $\mu_j=\lambda_j/2p_j$ for $j=1,2$ and $\mu=c/q.$ We look for the profile function $(u_1,u_2)$ in the
space $Y^\alpha(\mathbb{R}^N)=H^\alpha(\mathbb{R}^N)\times H^\alpha(\mathbb{R}^N)$
satisfying $\|u_1\|_{L^2}^2=\sigma_1$ and $\|u_2\|_{L^2}^2=\sigma_2$
for given $\sigma_1>0$ and $\sigma_2>0.$

\smallskip

To describe the main results of this section, let us first fix some definitions and notation.
We use the notations: $\mathbb{R}_{>0}=(0,\infty), \mathbb{R}_{\geq 0}=[0, \infty),$ and similar
meanings for $\mathbb{R}_{>0}^2$ and $\mathbb{R}_{\geq 0}^2.$
We write the ordered pairs in $\mathbb{R}^2$ as $\sigma=(\sigma_1,\sigma_2), \sigma^\prime=(\sigma_1^\prime, \sigma_2^\prime),$ etc.
Vectors in $Y^\alpha(\mathbb{R}^N)$ are written as $u=(u_1, u_2), f=(f_1,f_2),$ etc.
A sequence $\{f_n\}_{n\geq1}$ in $Y^\alpha(\mathbb{R}^N)$ is always understood as $f_n=(f_1^n, f_2^n).$

\smallskip

For any $\sigma\in \mathbb{R}_{>0}^2,$
denote by $\Sigma_{\sigma}=\Sigma_{\sigma_1}\times \Sigma_{\sigma_2}$ the product of $L^2(\mathbb{R}^N)$ spheres and
let $M_{\sigma}$ denotes the set of coupled standing wave solutions
\begin{equation}\label{cmDef}
M_{\sigma}=\left\{u\in Y^\alpha(\mathbb{R}^N): u\in \Sigma_{\sigma},\ E(u)=E(\sigma) \right\},\ E(\sigma):=\inf_{f\in \Sigma_{\sigma} }E(f).
\end{equation}
The following analogues of
Theorem~\ref{Mainpower} and its Corollary~\ref{Maincor} are the main results of this section.

\begin{thm}\label{CHMain}
Suppose that \eqref{Cfhypo1} holds. Then, for any $\sigma\in \mathbb{R}_{>0}^2,$
the set $M_{\sigma}$ defined in \eqref{cmDef}
is nonempty. Moreover, the following statements hold:

\smallskip

$1.$ If $f_n\in Y^\alpha(\mathbb{R}^N)$, $\|f_1^n\|_{L^2}^2\to \sigma_1$, $\|f_2^n\|_{L^2}^2\to \sigma_2,$ and $E(f_n)\to E(\sigma),$
then the sequence
$\{f_n\}_{n\geq 1}$ is relatively compact in $Y^\alpha(\mathbb{R}^N)$ up to a translation.

\smallskip

$2.$ $f_n \to M_{\sigma}$ in the following sense,
\[
\lim_{n\to \infty}\inf_{u\in M_{\sigma}}\|f_n-u\|_{Y^\alpha}=0.
\]
Furthermore, the solution set $M_{\sigma}$ has the following properties

\smallskip

$3.$ The Lagrange multiplier $(\omega_1, \omega_2)$ associated with $u=(u_1,u_2)$ on $\Sigma_\sigma$ satisfies $\omega_1>0$ and $\omega_2>0.$

\smallskip

$4.$ If $(u_1,u_2)\in M_{\sigma},$ then $(|u_1|, |u_2|)\in M_{\sigma}$ and $|u_1|>0, |u_2|>0$ on $\mathbb{R}^N.$ One also has $(|u_1|^\ast, |u_2|^\ast)\in M_{\sigma}$ whenever $(u_1,u_2)\in M_{\sigma}.$

\smallskip

\end{thm}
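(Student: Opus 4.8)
The plan is to follow the same concentration--compactness blueprint used for the scalar problem in Section~\ref{singleEq}, but now adapted to the two--constraint setting, and then to transfer the structural properties of minimizers essentially verbatim from the scalar case. First I would establish the preliminary estimates: boundedness of minimizing sequences $\{f_n\}_{n\ge 1}\subset Y^\alpha(\mathbb{R}^N)$ for $E(\sigma)$ and the bounds $-\infty<E(\sigma)<0$. The lower semicontinuity and boundedness arguments are exactly those of Lemma~\ref{NegHP}, applied coordinatewise, since each of the three Coulomb terms $\mathbb{D}_{p_1}(u_1)$, $\mathbb{D}_{p_2}(u_2)$, $\mathbb{D}_q(u_1,u_2)$ is controlled by the Hardy--Littlewood--Sobolev and fractional Gagliardo--Nirenberg inequalities with subcritical exponents under the hypothesis $2\le p_1,p_2,q<(N+2\alpha+\beta)/N$ in \eqref{Cfhypo1}; for the cross term one uses HLS with $|u_1|^q\in L^{2N/(N+\beta)}$ and $|u_2|^q\in L^{2N/(N+\beta)}$. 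Strict negativity $E(\sigma)<0$ follows by scaling $(u_1)_\theta(x)=\theta^{1/2}u_1(\theta^{1/N}x)$, $(u_2)_\theta$ similarly, exactly as in \eqref{NegAIn}, since the Coulomb terms scale like $\theta^{\kappa_j}$ with $\kappa_j=(Np_j-\beta-N)/N\in(0,2\alpha/N)$ and likewise for $q$.

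Next I would run the concentration--compactness dichotomy on the scalar density $\rho_n=|f_1^n|^2+|f_2^n|^2$, defining the concentration functions $M_n(R)$ and the limit $L\in[0,\sigma_1+\sigma_2]$ as in \eqref{LLdef}. Vanishing ($L=0$) is excluded exactly as before: Lemma~\ref{Lemvan} forces $\|f_1^n\|_{L^r}\to0$ and $\|f_2^n\|_{L^r}\to 0$ for every $2<r<2N/(N-2\alpha)$, hence all three Coulomb terms vanish, so $E(\sigma)=\lim E(f_n)\ge 0$, contradicting $E(\sigma)<0$. To exclude dichotomy $L\in(0,\sigma_1+\sigma_2)$ I would mimic Lemmas~\ref{revLem}--\ref{singleDi}: use the cutoffs $\phi_R,\psi_R$ centered at the concentration points $y_n$ to split each component, invoke the commutator estimate (Lemma~\ref{CEst}) to show the gradient terms nearly add, observe that the annular remainders of the power-type and Coulomb-type integrals are $O(\varepsilon)$, and conclude a super--additivity inequality
\begin{equation}\label{CHsub}
E(\sigma)\ \ge\ E(\sigma^{(1)})+E(\sigma^{(2)})
\end{equation}
for a splitting $\sigma=\sigma^{(1)}+\sigma^{(2)}$ with $\sigma^{(1)},\sigma^{(2)}\in\mathbb{R}_{\ge 0}^2$, $\sigma^{(1)}\ne 0\ne\sigma^{(2)}$. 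Against \eqref{CHsub} one proves the \emph{strict} sub--additivity
\begin{equation}\label{CHstrict}
E(\sigma^{(1)}+\sigma^{(2)})\ <\ E(\sigma^{(1)})+E(\sigma^{(2)})
\end{equation}
using the scaling/comparison trick of Lemma~\ref{singleDi}: take near--minimizing sequences $\{z_n\}$, $\{w_n\}$ for $E(\sigma^{(1)})$, $E(\sigma^{(2)})$, form the normalized averages $K_1^n,K_2^n$ of the energy densities, pass to $(K_1,K_2)$, and in each of the cases $K_1<K_2$, $K_1>K_2$, $K_1=K_2$ dilate the cheaper component (or use Lemma~\ref{PoLem1}-type lower bounds $\mathbb{D}_{p_j}(z_n)\ge\delta$, $\mathbb{D}_q(\cdot)\ge\delta$ when $K_1=K_2$) to gain a definite amount $\delta>0$. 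This is the step I expect to be the main obstacle: in the two--parameter setting one must be careful that the dilation $T^{1/2}$ acts on a single component while leaving the $L^2$--norm of the other fixed, and that the cross term $\mathbb{D}_q(u_1,u_2)$ behaves correctly under such a one--component dilation; one needs the subcriticality of $q$ precisely to ensure the cross term's scaling exponent lies in $(0,2\alpha/N)$, and one needs the coupling constant $c>0$ to guarantee the relevant Coulomb quantity is bounded below along minimizing sequences (an analogue of Lemma~\ref{PoLem1} for the system).

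Once vanishing and dichotomy are ruled out we have $L=\sigma_1+\sigma_2$ (compactness). I would then prove the two--component analogue of Lemma~\ref{shiftLem}: there exist $y_n$ so that mass does not escape, $\widetilde f_n(\cdot)=f_n(\cdot+y_n)$ converges (up to a subsequence) strongly in $L^2\times L^2$ to some $u=(u_1,u_2)$ with $\|u_j\|_{L^2}^2=\sigma_j$, the convergence of each Coulomb functional follows from the Lipschitz estimate \eqref{ConvInq3} together with HLS exactly as in \eqref{conIn23}--\eqref{ConvInq5} (for the cross term one splits $\mathbb{D}_q(\widetilde f_1^n,\widetilde f_2^n)-\mathbb{D}_q(u_1,u_2)$ into two pieces and bounds each by $\|\widetilde f_1^n-u_1\|_{L^2}^{\lambda'}$ or $\|\widetilde f_2^n-u_2\|_{L^2}^{\lambda'}$), weak lower semicontinuity of the norm then gives $E(u)\le E(\sigma)$, hence $u\in M_\sigma$, and matching all quantities upgrades the convergence to strong convergence in $Y^\alpha$. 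This proves $M_\sigma\ne\emptyset$ and Statements~1 and~2 (Statement~2 being a translation--invariance consequence of Statement~1, as in the scalar case). For Statement~3, any $u\in M_\sigma$ satisfies the Euler--Lagrange system \eqref{CfODEp} with multipliers $\omega_1,\omega_2$; testing the $j$-th equation with $u_j$ gives
\[
\|(-\Delta)^{\alpha/2}u_j\|_{L^2}^2-\lambda_j\mathbb{D}_{p_j}(u_j)-c\,\mathbb{D}_q(u_1,u_2)=-\omega_j\|u_j\|_{L^2}^2,
\]
and since $\lambda_j=2\mu_jp_j>\mu_j$ and the cross coefficient likewise exceeds its energy weight (using $p_j>1$, $q>1$), the left side is strictly negative, forcing $\omega_j>0$. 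Finally, Statement~4: $\|(-\Delta)^{\alpha/2}|u_j|\|_{L^2}\le\|(-\Delta)^{\alpha/2}u_j\|_{L^2}$ gives $E(|u_1|,|u_2|)\le E(u_1,u_2)$, so $(|u_1|,|u_2|)\in M_\sigma$; the Riesz rearrangement inequality applied to each of the three Coulomb terms (including the cross term, for which Riesz's inequality with two distinct rearranged functions applies) together with \eqref{symPO1} and \eqref{LpSym} gives $E(|u_1|^\ast,|u_2|^\ast)\le E(|u_1|,|u_2|)$, hence $(|u_1|^\ast,|u_2|^\ast)\in M_\sigma$; and positivity $|u_j|>0$ follows from the Bessel--kernel convolution representation $|u_j|=\mathcal{W}_{\omega_j}\star(\text{nonnegative nonzero RHS})$ exactly as in \eqref{lagra9}--\eqref{lagra11}, using $\omega_j>0$.
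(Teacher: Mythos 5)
Your overall route is the same as the paper's (concentration--compactness on $\rho_n=|f_1^n|^2+|f_2^n|^2$, exclusion of vanishing and dichotomy, strong convergence of translated minimizing sequences, then the structural properties), but there is a genuine gap in your proof of Statement~3. From the tested Euler--Lagrange identity
\[
\|(-\Delta)^{\alpha/2}u_1\|_{L^2}^2-\lambda_1\mathbb{D}_{p_1}(u_1)-c\,\mathbb{D}_q(u_1,u_2)=-\omega_1\|u_1\|_{L^2}^2,
\]
the coefficient comparison $\lambda_1=2\mu_1p_1>\mu_1$, $c=\mu q>\mu$ only shows that the left side is bounded above by a multiple of $\mathbb{F}_1(u_1)-\mu\,\mathbb{D}_q(u_1,u_2)$ (with $\mathbb{F}_1$ as in \eqref{I1I2Def}); it does \emph{not} show it is negative. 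In the scalar case the analogous quantity is negative because it coincides (up to coefficients) with $J(u)=J(\sigma)<0$, but here $E(u)<0$ does not split into per-component negativity: one could a priori have $\mathbb{F}_1(u_1)-\mu\mathbb{D}_q(u_1,u_2)\geq 0$ with the deficit absorbed by the second component. This is exactly what the paper's Lemma~\ref{LMnegL} supplies, via a separate comparison argument: if $\mathbb{F}_1(f_1^n)-\mu\mathbb{D}_q(f_n)$ did not stay below a negative constant along a minimizing sequence, then $E(\sigma)\geq E(0,\widetilde u_2)$ for a scalar minimizer $\widetilde u_2$ of $\mathbb{F}_2$ on $\Sigma_{\sigma_2}$, which is contradicted by testing with a pair $(\widetilde u_1,\widetilde u_2)$ chosen so that $\mathbb{F}_1(\widetilde u_1)-\mu\mathbb{D}_q(\widetilde u_1,\widetilde u_2)<0$. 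Your proposal needs this lemma (or an equivalent per-component negativity statement) before the sign of $\omega_j$ can be concluded.

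A second, smaller point: your blanket claim of strict subadditivity via the scaling/comparison trick does not cover the fully decoupled splitting $\sigma^{(1)}\in\mathbb{R}_{>0}\times\{0\}$, $\sigma^{(2)}\in\{0\}\times\mathbb{R}_{>0}$ (Case~4 in the paper's analogue of Lemma~\ref{singleDi}). There, dilating one component of a pure state gains nothing from the coupling, and strictness instead comes from the observation that superposing the two scalar minimizers produces a strictly positive cross term $\mathbb{D}_q(u^{\prime},u^{\prime\prime})>0$, so $E(\sigma_1^{\prime},\sigma_2^{\prime\prime})<J_1(\sigma_1^{\prime})+J_2(\sigma_2^{\prime\prime})$. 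You allude to $c>0$ being needed, but you should isolate this case explicitly, since it is the one place where the scalar argument genuinely does not transfer. The remaining parts of your plan (boundedness, $E(\sigma)<0$, vanishing, the splitting lemma with the commutator estimate, the mixed cases of dichotomy via the $(K_1,K_2)$ or $(e_1,e_2)$ comparison, strong $Y^\alpha$ convergence, and Statement~4 via \eqref{symPO1}, \eqref{LpSym}, Riesz rearrangement and the Bessel kernel representation) match the paper's argument.
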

As an immediate consequence we can get the stability result:
\begin{cor}\label{CMaincor}
The set $M_{\sigma}$ is stable in the same sense as in Corollary~\ref{Maincor}.
\end{cor}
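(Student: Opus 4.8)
The plan is to transcribe, with the obvious changes forced by having two constraints, the argument that deduces Corollary~\ref{Maincor} from Theorem~\ref{Mainpower}. Two ingredients are needed. First, the Cauchy problem for \eqref{CfNLSp} must be (at least locally) well-posed in $Y^\alpha(\mathbb{R}^N)$, so that the solution $\Psi(\cdot,t)=(\Psi_1(\cdot,t),\Psi_2(\cdot,t))$ appearing in the statement is meaningful, and along the flow the energy $E(\Psi_1,\Psi_2)$ and \emph{each} of the masses $\|\Psi_j(\cdot,t)\|_{L^2}^2$, $j=1,2$, are conserved (multiplying the $j$-th equation by $\overline{\Psi_j}$, integrating and taking imaginary parts kills all the terms on the right and the fractional-Laplacian term); this is the same standing hypothesis already invoked for Corollary~\ref{Maincor}, and, combined with the coercivity of $E$ on the mass spheres, it also yields global existence. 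Second, we use the compactness assertion of Theorem~\ref{CHMain}: if $\|f_j^n\|_{L^2}^2\to\sigma_j$ ($j=1,2$) and $E(f_n)\to E(\sigma)$, then $\mathrm{dist}_{Y^\alpha}(f_n,M_\sigma)\to 0$.

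Suppose, for contradiction, that $M_\sigma$ is not stable. Then there are $\varepsilon_0>0$, data $v_n=(v_1^n,v_2^n)\in Y^\alpha(\mathbb{R}^N)$ with $\inf_{h\in M_\sigma}\|v_n-h\|_{Y^\alpha}<1/n$, and times $t_n\ge 0$ with
\[
\inf_{h\in M_\sigma}\bigl\|\bigl(\Psi_1^n(\cdot,t_n),\Psi_2^n(\cdot,t_n)\bigr)-h\bigr\|_{Y^\alpha}\ge\varepsilon_0\quad\text{for all }n,
\]
where $\Psi^n$ solves \eqref{CfNLSp} with $\Psi^n(\cdot,0)=v_n$. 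Since every $h=(h_1,h_2)\in M_\sigma$ obeys $E(h)=E(\sigma)$ and $\|h_j\|_{L^2}^2=\sigma_j$, continuity of $E$ and of $u\mapsto\|u_j\|_{L^2}^2$ on $Y^\alpha(\mathbb{R}^N)$ gives $E(v_n)\to E(\sigma)$ and $\|v_j^n\|_{L^2}^2\to\sigma_j$; by the conservation laws the same holds at $t_n$, i.e. $E\bigl(\Psi_1^n(\cdot,t_n),\Psi_2^n(\cdot,t_n)\bigr)\to E(\sigma)$ and $\|\Psi_j^n(\cdot,t_n)\|_{L^2}^2\to\sigma_j$ for $j=1,2$.

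Now renormalize each component onto the sphere: choose $\zeta_j^n>0$ with $\|\zeta_j^n\Psi_j^n(\cdot,t_n)\|_{L^2}^2=\sigma_j$, whence $\zeta_j^n\to 1$, and set $w_n=(\zeta_1^n\Psi_1^n(\cdot,t_n),\zeta_2^n\Psi_2^n(\cdot,t_n))\in\Sigma_\sigma$. Because $\{\Psi^n(\cdot,t_n)\}$ is bounded in $Y^\alpha(\mathbb{R}^N)$ (boundedness of minimizing sequences, the analogue of Lemma~\ref{NegHP} established in the proof of Theorem~\ref{CHMain}), the gradient terms, the $\mathbb{D}_{p_j}$ terms and the coupling term $\mathbb{D}_q$ are all bounded, and since each of them gets multiplied by a power of the $\zeta_j^n$'s tending to $1$, one gets $E(w_n)-E\bigl(\Psi_1^n(\cdot,t_n),\Psi_2^n(\cdot,t_n)\bigr)\to 0$, so $E(w_n)\to E(\sigma)$. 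Thus $\{w_n\}$ is a minimizing sequence for $E(\sigma)$, and Theorem~\ref{CHMain} provides $h_n\in M_\sigma$ with $\|w_n-h_n\|_{Y^\alpha}<\varepsilon_0/2$ for large $n$. Then
\[
\varepsilon_0\le\bigl\|\bigl(\Psi_1^n(\cdot,t_n),\Psi_2^n(\cdot,t_n)\bigr)-h_n\bigr\|_{Y^\alpha}\le\sum_{j=1}^2|1-\zeta_j^n|\,\|\Psi_j^n(\cdot,t_n)\|_{H^\alpha}+\frac{\varepsilon_0}{2},
\]
and letting $n\to\infty$ forces $\varepsilon_0\le\varepsilon_0/2$, the desired contradiction.

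The argument is a routine adaptation of the scalar case; the only genuinely new feature is that one must rescale the two components separately, using $\zeta_1^n$ and $\zeta_2^n$ in place of a single factor, which is harmless since both tend to $1$. The point that is used but not proved here — and the one I would flag as the real gap to fill — is the local well-posedness of \eqref{CfNLSp} in $Y^\alpha(\mathbb{R}^N)$ together with conservation of the energy and of the two masses (hence global existence); this is exactly the hypothesis already taken for granted for Corollary~\ref{Maincor}, so no new difficulty arises.
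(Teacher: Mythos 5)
Your argument is exactly the one the paper intends: the paper declares the proof of Corollary~\ref{CMaincor} ``standard'' because it is the verbatim adaptation of the contradiction argument used for Corollary~\ref{Maincor}, with component-wise renormalization factors $\zeta_1^n,\zeta_2^n\to 1$ and with well-posedness plus conservation of $E$ and of both masses assumed (as stated in the remark following the corollary). Your proposal matches this approach, including flagging the well-posedness/conservation hypothesis as an assumption rather than something proved.
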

\begin{remark}
In Corollary~\ref{CMaincor}, we made the assumption that for any $(\Psi_1^0, \Psi_2^0)\in Y^\alpha(\mathbb{R}^N)$ and every $T>0,$ \eqref{CfNLSp} has a unique solution $(\Psi_1, \Psi_2)\in C([0,T], Y^\alpha(\mathbb{R}^N))$ with  $(\Psi_1(x,0), \Psi_2(x,0))=(\Psi_1^0(x), \Psi_2^0(x)).$
The map $(\Psi_1^0, \Psi_2^0)\mapsto (\Psi_1, \Psi_2)$ is locally Lipschitz from $Y^\alpha(\mathbb{R}^N)$ to $C([0,T], Y^\alpha(\mathbb{R}^N)).$
Moreover, the functional $E(\Psi_1(t), \Psi_2(t) )$ and the powers $\|\Psi_1(t) \|_{L^2}^2$, $\|\Psi_2(t) \|_{L^2}^2$ are
independent of $t\in [0,T].$
\end{remark}

\begin{remark}
In \eqref{AssPo}, \eqref{assCom}, and \eqref{Cfhypo1}, one can also include $\alpha=1,$ in which
case the same argument works and analogues
of Theorem~\ref{Mainpower}, Theorem~\ref{arbiTHM}, Theorem~\ref{CHMain}, and their corollaries remain true.
We fix $\alpha\in (0, 1)$ only to
avoid providing some additional technical details.
\end{remark}
In what follows we use the following notation
\begin{equation}\label{I1I2Def}
\begin{aligned}
& \mathbb{F}_1(f)=\frac{1}{2}\|(-\Delta)^{\alpha/2}f\|_{L^2}^2- \mu_1\mathbb{D}_{p_1}(f),\ \forall f\in H^\alpha(\mathbb{R}^N), \\
& \mathbb{F}_2(f)=\frac{1}{2}\|(-\Delta)^{\alpha/2}f\|_{L^2}^2- \mu_2\mathbb{D}_{p_2}(f),\ \forall f\in H^\alpha(\mathbb{R}^N).
\end{aligned}
\end{equation}
We will prove Theorem~\ref{CHMain} and its corollary following the same steps as used in the preceding section to prove
Theorem~\ref{Mainpower}.
As in the case of one parameter problem, we first prove some preliminaries lemmas.

\smallskip

Analogue of Lemma~\ref{NegHP} is the following.
\begin{lem}
Suppose the conditions \eqref{Cfhypo1} hold. Then, for any $\sigma\in \mathbb{R}_{>0}^2,$

\smallskip

(i) If $f_n\in Y^\alpha(\mathbb{R}^N), \|f_1^n\|_{L^2}^2\to \sigma_1$, $\|f_2^n\|_{L^2}^2\to \sigma_2,$
and $E(f_n)\to E(\sigma),$ then the sequence $\{f_n\}_{n\geq 1}$ is bounded in $Y^\alpha(\mathbb{R}^N).$

\smallskip

(i) One has $E(\sigma)\in (-\infty, 0).$
\end{lem}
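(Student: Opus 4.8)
The plan is to mirror the proof of Lemma~\ref{NegHP} componentwise, using the coupling term as an essentially harmless additional contribution. First I would estimate each Coulomb energy $\mathbb{D}_{p_j}(f_j^n)$ exactly as in \eqref{Mbou3}: since \eqref{Cfhypo1} guarantees $|f|^{p_j}\in L^{2N/(N+\beta)}(\mathbb{R}^N)$ for every $f\in H^\alpha(\mathbb{R}^N)$, the Hardy--Littlewood--Sobolev inequality with $r=t=2N/(N+\beta)$ together with the fractional Gagliardo--Nirenberg inequality yields
\[
\mathbb{D}_{p_j}(f_j^n)\leq C\,\|(-\Delta)^{\alpha/2}f_j^n\|_{L^2}^{2p_j\mu_j^\prime}\leq C\,\|f_j^n\|_{H^\alpha}^{2p_j\mu_j^\prime},\qquad \mu_j^\prime=\frac{Np_j-N-\beta}{2\alpha p_j},
\]
and the assumption $p_j<(N+2\alpha+\beta)/N$ forces $2p_j\mu_j^\prime=(Np_j-N-\beta)/\alpha<2$. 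For the cross term $\mathbb{D}_q(f_1^n,f_2^n)$ I would apply Hardy--Littlewood--Sobolev with the same exponents and then the elementary inequality $ab\leq \tfrac12(a^2+b^2)$ to split it, obtaining $\mathbb{D}_q(f_1^n,f_2^n)\leq C(\|f_1^n\|_{H^\alpha}^{2q\mu^\prime}+\|f_2^n\|_{H^\alpha}^{2q\mu^\prime})$ with $2q\mu^\prime=(Nq-N-\beta)/\alpha<2$ again by \eqref{Cfhypo1}.

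Next I would write, for $j=1,2$,
\[
\frac{1}{2}\|f_j^n\|_{H^\alpha}^2=\mathbb{F}_j(f_j^n)+\mu_j\mathbb{D}_{p_j}(f_j^n)+\frac{1}{2}\|f_j^n\|_{L^2}^2,
\]
sum over $j$, and use $E(f_n)=\mathbb{F}_1(f_1^n)+\mathbb{F}_2(f_2^n)-\mu\,\mathbb{D}_q(f_1^n,f_2^n)$ to express $\sum_j\|f_j^n\|_{H^\alpha}^2$ in terms of $E(f_n)$ (which is bounded, being convergent), the bounded $L^2$-norms, and the Coulomb energies estimated above. This gives an inequality of the form
\[
\frac{1}{2}\|f_n\|_{Y^\alpha}^2\leq C\Bigl(1+\|f_1^n\|_{H^\alpha}^{2p_1\mu_1^\prime}+\|f_2^n\|_{H^\alpha}^{2p_2\mu_2^\prime}+\|f_1^n\|_{H^\alpha}^{2q\mu^\prime}+\|f_2^n\|_{H^\alpha}^{2q\mu^\prime}\Bigr),
\]
and since every exponent on the right lies in $(0,2)$ and $\|f_n\|_{Y^\alpha}^2=\|f_1^n\|_{H^\alpha}^2+\|f_2^n\|_{H^\alpha}^2$ dominates each $\|f_j^n\|_{H^\alpha}^2$, Young's inequality absorbs the right-hand side and yields a uniform bound $\|f_n\|_{Y^\alpha}\leq B$, proving (i). The same estimates immediately give $E(\sigma)>-\infty$ on $\Sigma_\sigma$.

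For $E(\sigma)<0$ I would use the scaling $u_{j,\theta}(x)=\theta^{1/2}u_j(\theta^{1/N}x)$, which preserves each constraint $\|u_{j,\theta}\|_{L^2}^2=\sigma_j$, and recall from the computation in Lemma~\ref{NegHP} that under this scaling $\|(-\Delta)^{\alpha/2}u_{j,\theta}\|_{L^2}^2=\theta^{2\alpha/N}\|(-\Delta)^{\alpha/2}u_j\|_{L^2}^2$ while every Coulomb energy $\mathbb{D}_r(\cdot,\cdot)$ picks up a factor $\theta^{\kappa_r}$ with $\kappa_r=(Nr-\beta-N)/N\in[0,2\alpha/N)$, strictly smaller than the kinetic exponent. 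Fixing any $(u_1,u_2)\in\Sigma_\sigma$ with all three Coulomb energies positive (take, say, two suitable bump functions), we get $E(u_{1,\theta},u_{2,\theta})\leq \tfrac12\theta^{2\alpha/N}\sum_j\|(-\Delta)^{\alpha/2}u_j\|_{L^2}^2-c\,\theta^{\kappa}$ for some $c>0$ and $\kappa<2\alpha/N$, which is negative for $\theta$ small; hence $E(\sigma)<0$. The main obstacle, such as it is, is purely bookkeeping: one must verify that all three coupling exponents $2p_1\mu_1^\prime$, $2p_2\mu_2^\prime$, $2q\mu^\prime$ simultaneously stay below $2$ under the single hypothesis \eqref{Cfhypo1}, and that the cross term genuinely splits into pieces controlled by the individual $H^\alpha$-norms; both are handled by the $ab\leq\tfrac12(a^2+b^2)$ trick and the arithmetic already carried out in the scalar case, so no new idea is required.
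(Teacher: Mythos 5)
Your argument for the boundedness statement is exactly the paper's: the componentwise estimate \eqref{Mbou3} for $\mathbb{D}_{p_j}(f_j^n)$, the Hardy--Littlewood--Sobolev inequality followed by $ab\leq\tfrac12(a^2+b^2)$ and Gagliardo--Nirenberg for the cross term (this is precisely \eqref{crossC2}), the identity expressing $\tfrac12\|f_n\|_{Y^\alpha}^2$ through $E(f_n)$, and the observation that all exponents $2p_j\varrho_j$ and $2q\varrho$ lie in $(0,2)$; your exponent arithmetic $(Np_j-N-\beta)/\alpha<2$ $\Leftrightarrow$ $p_j<(N+2\alpha+\beta)/N$ is correct (and in fact cleaner than the chain of inequalities printed in the scalar Lemma~\ref{NegHP}). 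The only genuine divergence is in proving $E(\sigma)<0$: you scale both components of a fixed pair $(u_1,u_2)\in\Sigma_\sigma$ via $u_{j,\theta}(x)=\theta^{1/2}u_j(\theta^{1/N}x)$ and note that every Coulomb term (including the coupling term $\mathbb{D}_q$) carries an exponent $\kappa_r=(Nr-N-\beta)/N\in\bigl((N-\beta)/N,\,2\alpha/N\bigr)$ strictly below the kinetic exponent $2\alpha/N$, so the energy is negative for small $\theta$. The paper instead discards the coupling term using $\mu>0$, writes $E(\sigma)\leq E(\sigma_1,0)+E(0,\sigma_2)$, and quotes the scalar Lemma~\ref{NegHP} for each summand. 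Both routes are correct; yours is self-contained and exploits that the cross term scales favorably as well, while the paper's is shorter because it recycles the scalar result. No gap.
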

\begin{proof}
To prove (i), we use the estimate \eqref{Mbou3} to obtain
\begin{equation}\label{crossC}
\begin{aligned}
& \mathbb{D}_{p_1}(f_1^n)=\int_{\mathbb{R}^N}\mathcal{I}_\beta^{p_1}(f_1^n)|f_1^n|^{p_1}\ dx \leq C \|f_n\|_{H^\alpha}^{2p_1\varrho_1}, \\
& \mathbb{D}_{p_2}(f_2^n)=\int_{\mathbb{R}^N}\mathcal{I}_\beta^{p_2}(f_2^n)|f_2^n|^{p_2}\ dx \leq C \|f_n\|_{H^\alpha}^{2p_2\varrho_2},
\end{aligned}
\end{equation}
where $\varrho_j=(Np_j-N-\beta)/2\alpha p_j$ for $j=1,2.$ Since $\|f_1^n\|_{L^2}$ and $\|f_2^n\|_{L^2}$ are
bounded, using the Hardy-Littlewood-Sobolev inequality, the Young's inequality, and the fractional Gagliardo-Nirenberg inequality, we have that
\begin{equation}\label{crossC2}
\begin{aligned}
\mathbb{D}_q(f_1^n, f_2^n)& \leq C \|f_1^n\|_{L^{2Nq/(N+\beta)}}^q \|f_2^n\|_{L^{2Nq/(N+\beta)}}^q  \\
& \leq \frac{C}{2}\left(\|f_1^n\|_{L^{2Nq/(N+\beta)}}^{2q} +\|f_2^n\|_{L^{2Nq/(N+\beta)}}^{2q} \right)\leq C\|f_n\|_{H^\alpha}^{2\varrho q},
\end{aligned}
\end{equation}
where $\varrho=(Nq-N-\beta)/2\alpha q.$
As in the proof of Lemma~\ref{NegHP}, we now write
\[
\frac{1}{2}\|f_n\|_{Y^\alpha}^2=E(f_n)+\sum_{j=1}^2 \mu_j\mathbb{D}_{p_j}(f_j^n)+\mu \mathbb{D}_q(f_n)+\frac{1}{2}\|f_1^n\|_{L^2}^2+\frac{1}{2}\|f_2^n\|_{L^2}^2.
\]
Since the sequence of numbers $\{E(f_n)\}_{n\geq 1}$ is bounded, using the estimates \eqref{crossC} and \eqref{crossC2}, we obtain that
\begin{equation}\label{crossC3}
\frac{1}{2}\|f_n\|_{Y^\alpha}^2\leq C \left(1+\|f_n\|_{H^\alpha}^{2p_1\varrho_1}+ \|f_n\|_{H^\alpha}^{2p_2\varrho_2}+\|f_n\|_{H^\alpha}^{2\varrho q} \right).
\end{equation}
Since $2p_j\varrho_j<2$ for $j=1,2$ and $2\varrho q<2,$ it follows from \eqref{crossC3} that the
sequence $\{f_n\}_{n\geq 1}$ is bounded in $Y^\alpha(\mathbb{R}^N).$
The statement $E(\sigma)>-\infty$ can be easily proved using the estimates \eqref{crossC} and \eqref{crossC2}.

\smallskip

To see $E(\sigma)<0,$ we use the fact that $\mu>0$ to obtain
\[
E(\sigma)\leq E(\sigma_1, 0)+E(0, \sigma_2).
\]
Since $\sigma_1, \sigma_2\in \mathbb{R}_{>0},$ as in Lemma~\ref{NegHP}, we have that $E(\sigma_1, 0)<0$ and $E(0, \sigma_2)<0$ and hence,
$E(\sigma)<0.$
\end{proof}
Analogue of Lemma~\ref{PoLem1} holds in the present context
without change of statement and an obvious modification in the proof. One also has the following

\begin{lem}\label{LMnegL}
For any $\sigma\in \mathbb{R}_{>0}^2,$ suppose that $f_n\in Y^\alpha(\mathbb{R}^N)$, $\|f_1^n\|_{L^2}^2\to \sigma_1$,
$\|f_2^n\|_{L^2}^2\to \sigma_2$, and $E(f_n)\to E(\sigma).$ Then there
exist numbers $\delta_1>0, \delta_2>0,$ and $n_0=n_0(\delta_j)\in \mathbb{N}$ such that for all $n\geq n_0,$
\[
\mathbb{F}_j(f_j^n)-\mu \mathbb{D}_q(f_n)\leq -\delta_j,\ j=1,2.
\]
\end{lem}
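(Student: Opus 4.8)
The plan is to mimic the scalar argument behind Lemma~\ref{PoLem1}, but now at the level of the decoupled functionals $\mathbb{F}_j$, using the energy bound $E(f_n)\to E(\sigma)<0$ to force each piece to be quantitatively negative. First I would write, for a minimizing sequence $f_n=(f_1^n,f_2^n)$,
\[
E(f_n)=\mathbb{F}_1(f_1^n)+\mathbb{F}_2(f_2^n)-\mu\,\mathbb{D}_q(f_n),
\]
so that $\mathbb{F}_1(f_1^n)+\mathbb{F}_2(f_2^n)-2\mu\,\mathbb{D}_q(f_n)=E(f_n)-\mu\,\mathbb{D}_q(f_n)$. The strategy is to show each summand $\mathbb{F}_j(f_j^n)-\mu\,\mathbb{D}_q(f_n)$ is bounded above by a negative constant for large $n$. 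Suppose not, say along a subsequence $\mathbb{F}_1(f_1^n)-\mu\,\mathbb{D}_q(f_n)\to \ell_1\ge 0$; I would then want to derive a contradiction with $E(\sigma)<0$.

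The mechanism is scaling in the first component only. Fix $T>1$ and set $g_n=(T^{1/2}f_1^n,f_2^n)$; then $\|T^{1/2}f_1^n\|_{L^2}^2\to T\sigma_1$, so $g_n$ is (after the usual normalization as in Lemma~\ref{revInsubadd}) an admissible competitor for $E(T\sigma_1,\sigma_2)$. A direct computation gives
\[
\mathbb{F}_1(T^{1/2}f_1^n)=T\,\mathbb{F}_1(f_1^n)+(T-T^{p_1})\mu_1\mathbb{D}_{p_1}(f_1^n),\qquad
\mathbb{D}_q(T^{1/2}f_1^n,f_2^n)=T^{q/2}\mathbb{D}_q(f_n),
\]
and since $p_1>1$, $q>2$, the lower-order terms push $E(g_n)$ strictly below $T\big(\mathbb{F}_1(f_1^n)-\mu\mathbb{D}_q(f_n)\big)+\mathbb{F}_2(f_2^n)$, provided the relevant quantities ($\mathbb{D}_{p_1}(f_1^n)$ and $\mathbb{D}_q(f_n)$) are bounded below away from zero — which is exactly where the analogue of Lemma~\ref{PoLem1} is invoked; if some of these Coulomb energies degenerate, the corresponding $\mathbb{F}$'s are themselves nonnegative and one argues as in \eqref{NegNlem}. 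Letting $n\to\infty$ and then sending $T\downarrow 1$, one reaches $E(\sigma_1,\sigma_2)\ge \ell_1+\mathbb{F}_2(\lim f_2^n)\cdots$, and combined with $E(\sigma)<0$ and $\ell_1\ge 0$ this pins down the sign; symmetrically for $j=2$. The conclusion then is that for each $j$ there is $\delta_j>0$ and $n_0$ with $\mathbb{F}_j(f_j^n)-\mu\mathbb{D}_q(f_n)\le-\delta_j$ for $n\ge n_0$.

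I expect the main obstacle to be the bookkeeping in the case where one (or both) of the Coulomb integrals $\mathbb{D}_{p_j}(f_j^n)$, $\mathbb{D}_q(f_n)$ fails to stay bounded below — one must split into cases and, in each, either use the strict scaling gain (when the relevant nonlocal term is nondegenerate) or the positivity of the kinetic term (when it vanishes), while keeping the two components' contributions separated; the coupling term $\mu\mathbb{D}_q(f_n)$ is shared between the two inequalities, so one has to be careful not to "double count" it. Once the case analysis is organized along the lines of the proof of Lemma~\ref{singleDi}, the estimates themselves are the routine Hardy--Littlewood--Sobolev and fractional Gagliardo--Nirenberg bounds already recorded in \eqref{Mbou3} and \eqref{crossC2}.
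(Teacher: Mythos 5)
Your overall plan does not close, and the gap is at the very place you wave at it: you cannot derive a contradiction from ``$\mathbb{F}_1(f_1^n)-\mu\mathbb{D}_q(f_n)\to\ell_1\ge 0$'' \emph{together with} $E(\sigma)<0$. Writing $E(f_n)=\bigl(\mathbb{F}_1(f_1^n)-\mu\mathbb{D}_q(f_n)\bigr)+\mathbb{F}_2(f_2^n)$, the assumption $\ell_1\ge 0$ only yields $E(\sigma)\ge\liminf_{n}\mathbb{F}_2(f_2^n)$, and this is perfectly compatible with $E(\sigma)<0$, since $\mathbb{F}_2(f_2^n)$ is itself typically negative (indeed $\liminf_n\mathbb{F}_2(f_2^n)\ge\inf_{\Sigma_{\sigma_2}}\mathbb{F}_2<0$ by the scalar theory). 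The sign of $E(\sigma)$ is therefore too weak a contradiction target. What the lemma really needs is the \emph{strict} comparison $E(\sigma_1,\sigma_2)<\inf_{\Sigma_{\sigma_2}}\mathbb{F}_2$, and that is how the paper argues: assuming $\liminf_n\bigl(\mathbb{F}_1(f_1^n)-\mu\mathbb{D}_q(f_n)\bigr)\ge 0$ gives $E(\sigma)\ge\liminf_n\mathbb{F}_2(f_2^n)\ge\mathbb{F}_2(\widetilde u_2)$, where $\widetilde u_2$ is a minimizer of $\mathbb{F}_2$ on $\Sigma_{\sigma_2}$ furnished by the scalar Theorem~\ref{Mainpower} (with $a=0$); then one picks $\widetilde u_1\in\Sigma_{\sigma_1}$ with $\mathbb{F}_1(\widetilde u_1)-\mu\mathbb{D}_q(\widetilde u_1,\widetilde u_2)<0$ (possible because $\inf_{\Sigma_{\sigma_1}}\mathbb{F}_1<0$ by the scaling of Lemma~\ref{NegHP} and $\mathbb{D}_q\ge0$), so that $E(\sigma)\le E(\widetilde u_1,\widetilde u_2)=\mathbb{F}_1(\widetilde u_1)-\mu\mathbb{D}_q(\widetilde u_1,\widetilde u_2)+\mathbb{F}_2(\widetilde u_2)<\mathbb{F}_2(\widetilde u_2)$, the desired contradiction. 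Your proposal never makes this comparison with the one-component problem, which is the essential missing idea.

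Two further technical problems with your mechanism: scaling only the first slot by $T^{1/2}$ with $T>1$ produces competitors whose masses tend to $(T\sigma_1,\sigma_2)$, so your inequality concerns $E(T\sigma_1,\sigma_2)$, not $E(\sigma)$; passing $T\downarrow 1$ to transfer it back would require continuity or monotonicity of $E$ in the constraint, which is neither proved nor available at this preliminary stage (and smuggling in subadditivity would be circular, since this lemma is used before the dichotomy analysis). Moreover the quantity ``$\mathbb{F}_2(\lim f_2^n)$'' in your limiting step presupposes convergence of $f_2^n$, which is precisely what the compactness argument establishes only later. The correct proof needs no scaling of the minimizing sequence at all --- only the scalar existence result, the negativity of the scalar infima, and the nonnegativity of the coupling term.
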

\begin{proof}
Suppose to the contrary that there is a sequence $\{f_n\}_{n\geq 1}$ in $Y^\alpha(\mathbb{R}^N)$ satisfying the hypotheses of the lemma and that
\[
\liminf_{n\to \infty}\left(\mathbb{F}_1(f_1^n)-\mu \mathbb{D}_q(f_n) \right)\geq 0.
\]
This implies that
\begin{equation}\label{LMnegL2}
E(\sigma)=\lim_{n\to \infty}E(f_n)\geq \liminf_{n\to \infty}\mathbb{F}_2(f_2^n).
\end{equation}
Using Lemma~\ref{Mainpower}, let $\widetilde{u}_2\in H^\alpha(\mathbb{R}^N)$ be such that
$\mathbb{F}_2(\widetilde{u}_2)=\Inf{f\in \Sigma_{\sigma_2}}\mathbb{F}_2(f).$ Then \eqref{LMnegL2} gives that
$
E(\sigma)\geq E(0, \widetilde{u}_2).
$
To deduce a contradiction, let $\widetilde{u}_1\in \Sigma_{\sigma_1}$ be such
that $\mathbb{F}_1(\widetilde{u}_1)-\mu \mathbb{D}_q(\widetilde{u}_1, \widetilde{u}_2)<0.$ Then it follows that
\[
E(\sigma)\leq E(\widetilde{u}_1, \widetilde{u}_2)=\mathbb{F}_1(\widetilde{u}_1)-\mu \mathbb{D}_q(\widetilde{u}_1, \widetilde{u}_2)+E(0, \widetilde{u}_2)<E(0, \widetilde{u}_2),
\]
which is a contradiction. The proof of the statement involving $f_2^n$ follows
the same lines and we will not repeat it.
\end{proof}
For any $\sigma\in \mathbb{R}_{>0}^2,$ let $\{(f_1^n, f_2^n)\}_{n\geq 1}$ be any minimizing sequence for $E(\sigma).$ We now employ
the concentration compactness principle to the sequence of non-negative
functions $\sigma_n=|f_1^n|^2+|f_2^n|^2.$ As in the preceding case,
consider the associated concentration function $P_n(R)$ defined by
\[
P_n(R)=\sup_{y\in \mathbb{R}^N}\int_{B_R(y)}\sigma_n\ dx,\ \mathrm{for}\ n=1, 2, \ldots,\ \mathrm{and}\ R>0.
\]
In what follows, we continue to denote $f_n=(f_1^n, f_2^n).$
Suppose that evanescence of the energy minimizing $\{f_n\}_{n\geq 1}$ occurs, that is,
$
\Lim{n\to \infty} P_n(R)=0
$
for all $R>0.$ Then, as before, we see from Lemma~\ref{Lemvan} that
$
\mathbb{D}_{p_j}(f_j^n)\leq C \|f_j^n\|_{L^{2Np_j/(N+\beta)} }^{2p_j}\to 0
$
as $n\to \infty$ for $j=1,2.$
Using \eqref{crossC2} and Lemma~\ref{Lemvan}, it also follows that $\mathbb{D}_q(f_n)\to 0$ as $n\to \infty.$
Then, we obtain that
\[
E(\sigma)=\lim_{n\to \infty}E(f_n)\geq \liminf_{n\to \infty}\int_{\mathbb{R}^N}\left( |(-\Delta)^{\alpha/2}f_1^n|^2+|(-\Delta)^{\alpha/2}f_2^n|^2\right)\ dx\geq 0,
\]
which contradicts $E(\sigma)<0.$
Let us denote
\begin{equation}\label{limitDEF}
M=\lim_{R\to \infty} \lim_{n\to \infty} \left( \sup_{y\in \mathbb{R}^N}\int_{B_R(y)}\sigma_n\ dx\right)\in [0, \sigma_1+\sigma_2].
\end{equation}
Thus, we have established the following lemma.
\begin{lem}
If $\{f_n\}\subset Y^\alpha(\mathbb{R}^N)$ be any minimizing sequence
for $E(\sigma)$ and $M$ be as defined in \eqref{limitDEF}, then $M>0.$
\end{lem}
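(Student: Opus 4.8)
The plan is to argue by contradiction, running the evanescence alternative of the concentration--compactness principle exactly as sketched in the paragraph preceding the statement. Assume $M=0$. Since the inner quantity $R\mapsto\lim_{n\to\infty}P_n(R)$ is nondecreasing in $R$ and $M$ is its limit as $R\to\infty$, the hypothesis $M=0$ forces $\lim_{n\to\infty}P_n(R)=0$ for \emph{every} fixed $R>0$; i.e. genuine vanishing of the mass densities $\sigma_n=|f_1^n|^2+|f_2^n|^2$ occurs. This elementary reduction is the only place where the precise double-limit form of \eqref{limitDEF} is used.

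Next I would push this vanishing down to each component. By the preceding lemma the minimizing sequence is bounded in $Y^\alpha(\mathbb{R}^N)$, so $\{f_1^n\}$, $\{f_2^n\}$ and the corresponding $\{|(-\Delta)^{\alpha/2}f_j^n|\}$ are all bounded in $L^2(\mathbb{R}^N)$. From the pointwise bound $|f_j^n|^2\le\sigma_n$ we get $\sup_{y\in\mathbb{R}^N}\int_{B_R(y)}|f_j^n|^2\,dx\le P_n(R)\to0$ for each $R>0$, so Lemma~\ref{Lemvan} applies with $z_n=f_j^n$ and gives $\|f_j^n\|_{L^t}\to0$ for every $t\in(2,2N/(N-2\alpha))$ and $j=1,2$. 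Under \eqref{Cfhypo1} the exponents $2Np_j/(N+\beta)$ and $2Nq/(N+\beta)$ lie in this interval: the lower bounds use $N>\beta$ together with $p_j,q\ge2$, and the upper bounds use $p_j,q<(N+2\alpha+\beta)/N\le(N+\beta)/(N-2\alpha)$.

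I would then invoke the a priori estimates already recorded in \eqref{Mbou3}/\eqref{crossC} and in \eqref{crossC2}, which yield
\[
\begin{aligned}
\mathbb{D}_{p_j}(f_j^n)&\le C\,\|f_j^n\|_{L^{2Np_j/(N+\beta)}}^{2p_j}\longrightarrow0\quad(j=1,2),\\
\mathbb{D}_q(f_n)&\le C\,\|f_1^n\|_{L^{2Nq/(N+\beta)}}^{q}\,\|f_2^n\|_{L^{2Nq/(N+\beta)}}^{q}\longrightarrow0 .
\end{aligned}
\]
Consequently, from the explicit form of $E$,
\[
\begin{aligned}
E(\sigma)=\lim_{n\to\infty}E(f_n)
&=\lim_{n\to\infty}\Bigl(\tfrac12\sum_{j=1}^2\|(-\Delta)^{\alpha/2}f_j^n\|_{L^2}^2-\sum_{j=1}^2\mu_j\mathbb{D}_{p_j}(f_j^n)-\mu\,\mathbb{D}_q(f_n)\Bigr)\\
&\ge\liminf_{n\to\infty}\tfrac12\sum_{j=1}^2\|(-\Delta)^{\alpha/2}f_j^n\|_{L^2}^2\ge0,
\end{aligned}
\]
which contradicts $E(\sigma)<0$ from the preceding lemma. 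Hence $M>0$.

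I do not expect a real obstacle here: all the analytic content is carried by Lemma~\ref{Lemvan} and by the Hardy--Littlewood--Sobolev and fractional Gagliardo--Nirenberg bounds established for the previous lemma, so the statement is essentially a clean packaging of the evanescence paragraph. The only points deserving a line of care are the reduction from $M=0$ to componentwise vanishing in the sense required by Lemma~\ref{Lemvan}, and the routine check that the Lebesgue exponents $2Np_j/(N+\beta)$ and $2Nq/(N+\beta)$ are admissible.
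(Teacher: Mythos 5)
Your argument is correct and follows essentially the same route as the paper: assume vanishing, apply Lemma~\ref{Lemvan} together with the Hardy--Littlewood--Sobolev/Gagliardo--Nirenberg bounds \eqref{crossC}--\eqref{crossC2} to kill the nonlocal terms, and contradict $E(\sigma)<0$. Your extra checks (the monotonicity reduction from $M=0$ to vanishing for every $R$, and the admissibility of the exponents $2Np_j/(N+\beta)$, $2Nq/(N+\beta)$) are exactly the details the paper leaves implicit, and they are verified correctly.
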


\smallskip

Next, we rule out the possibility of the case $M\in (0, \sigma_1+\sigma_2).$ Analogue
of Lemma~\ref{revLem} is the following.
\begin{lem}\label{CrevLem}
For some subsequence of $\{f_n\}_{n\geq 1},$ which we
denote by the same, the following are true. For every $\varepsilon>0,$ there exists
$n_0\in \mathbb{N}$ and the sequences $\{(v_1^n, v_2^n)\}_{n\geq 1}$ and $\{(w_1^n, w_2^n)\}_{n\geq 1}$ of functions in $Y^\alpha(\mathbb{R}^N)$
such that for every $n\geq n_0,$

\smallskip

$1.$ ${\displaystyle \left\vert \int_{\mathbb{R}^N}\left( |v_1^n|^2+|v_2^n|^2\right)\ dx -M \right\vert <\varepsilon}$

\smallskip

$2.$ ${\displaystyle  \left\vert \int_{\mathbb{R}^N}\left( |w_1^n|^2+|w_2^n|^2\right)\ dx -((\sigma_1+\sigma_2)-M) \right\vert <\varepsilon }$

\smallskip

$3.$ ${\displaystyle E(f_n)\geq E(v_1^n, v_2^n)+E(w_1^n, w_2^n)-C \varepsilon }.$
\end{lem}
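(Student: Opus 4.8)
The plan is to adapt the proof of Lemma~\ref{revLem} to the coupled setting, applying the splitting-by-cut-off argument to both components simultaneously. By definition of $M$ in \eqref{limitDEF}, given $\varepsilon>0$ we first choose $R_0$ so that for $R>R_0$ we have $M-\varepsilon<\Lim{n\to\infty}P_n(R)\leq M$; passing to a subsequence we obtain $n_0$ and, for $n\geq n_0$, points $y_n\in\mathbb{R}^N$ with
\[
M-\varepsilon<\int_{B_R(y_n)}\sigma_n\ dx\leq \int_{B_{2R}(y_n)}\sigma_n\ dx<M+\varepsilon,
\]
where $\sigma_n=|f_1^n|^2+|f_2^n|^2$. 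Using the same cut-off functions $\phi_R,\psi_R$ as in Lemma~\ref{revLem}, I would set $v_j^n(x)=\phi_R(x-y_n)f_j^n(x)$ and $w_j^n(x)=\psi_R(x-y_n)f_j^n(x)$ for $j=1,2$. Statements 1 and 2 then follow immediately, since $|v_1^n|^2+|v_2^n|^2=\widetilde\phi_R^2\,\sigma_n$ and $|w_1^n|^2+|w_2^n|^2=\widetilde\psi_R^2\,\sigma_n$ with $\widetilde\phi_R(x)=\phi_R(x-y_n)$, $\widetilde\psi_R(x)=\psi_R(x-y_n)$, and $\phi_R^2+\psi_R^2\equiv 1$.

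For Statement 3, the key tool is again the commutator estimate of Lemma~\ref{CEst}: applied to each component $f_j^n$ with the same exponents $r_1=\infty$, $s_1=2$, $s_2=1+\tfrac{4\alpha}{N}$, it yields, exactly as in \eqref{CEst2} and \eqref{CEst7},
\[
\int_{\mathbb{R}^N}|(-\Delta)^{\alpha/2}(\widetilde\phi_R f_j^n)|^2\ dx\leq \int_{\mathbb{R}^N}\widetilde\phi_R^2\,|(-\Delta)^{\alpha/2}f_j^n|^2\ dx+C\varepsilon,
\]
and similarly for $\widetilde\psi_R$, for sufficiently large $R$ (here one uses the uniform $H^\alpha$-bound on $\{f_n\}$ established in the analogue of Lemma~\ref{NegHP}). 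Summing over $j=1,2$ and invoking $\widetilde\phi_R^2+\widetilde\psi_R^2\equiv 1$, a direct computation as in \eqref{CEst14} gives
\[
E(v_1^n,v_2^n)+E(w_1^n,w_2^n)\leq E(f_n)+C\varepsilon+\sum_{j=1}^2\mu_j\mathbf{F}^{(j)}[f_n]+\mu\,\mathbf{G}[f_n],
\]
where $\mathbf{F}^{(j)}[f_n]$ collects the self-interaction error terms of the form $\iint(\widetilde\phi_R^2+\widetilde\psi_R^2)D_{p_j}(|f_j^n|,|f_j^n|)-\iint D_{p_j}(|\widetilde\phi_R f_j^n|,\cdot)-\iint D_{p_j}(|\widetilde\psi_R f_j^n|,\cdot)$, and $\mathbf{G}[f_n]$ collects the analogous cross-term errors for $\mathbb{D}_q(f_1^n,f_2^n)$.

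The estimate of $\mathbf{F}^{(j)}[f_n]$ is identical to \eqref{Zinqu}: by the Hardy-Littlewood-Sobolev inequality (with $r=t=2N/(N+\beta)$) and interpolation one bounds it by $C\|f_j^n\|_{H^\alpha(\mathbb{B}_R)}^{2p_j\varrho_j}\|f_j^n\|_{L^2(\mathbb{B}_R)}^{2p_j(1-\varrho_j)}\leq C\varepsilon$, where $\mathbb{B}_R=B_{2R}(y_n)\setminus B_R(y_n)$, using that $\int_{\mathbb{B}_R}\sigma_n\ dx<2\varepsilon$. The \emph{main new point}, and what I expect to be the principal (though minor) obstacle, is the cross term $\mathbf{G}[f_n]$: here one must control quantities like $\iint \widetilde\phi_R^2(x)\,|f_1^n(x)|^q|f_2^n(y)|^q|x-y|^{\beta-N}\,dxdy - \iint|\widetilde\phi_R(x)f_1^n(x)|^q|\widetilde\phi_R(y)f_2^n(y)|^q|x-y|^{\beta-N}\,dxdy$, and similarly for $\widetilde\psi_R$, which does not factor as neatly. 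The remedy is to write each such difference so that at least one factor carries a weight $|\widetilde\phi_R^2-|\widetilde\phi_R|^{2q}|$ or mixed weight supported on $\mathbb{B}_R$, apply HLS followed by Hölder to separate the two variables, and then use interpolation together with the bound $\|f_i^n\|_{L^2(\mathbb{B}_R)}^2\leq\int_{\mathbb{B}_R}\sigma_n\ dx<2\varepsilon$ on whichever factor is localized to the annulus; the other factor is controlled by the uniform $H^\alpha$-bound. Since $2<2Nq/(N+\beta)<2N/(N-2\alpha)$ by \eqref{Cfhypo1}, the interpolation exponent is admissible, and one obtains $\mathbf{G}[f_n]\leq C\varepsilon$ with $C$ independent of $R$ and $n$. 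Combining all the estimates yields Statement 3.
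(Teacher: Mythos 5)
Your proposal is correct and follows essentially the same route as the paper's proof: the same cut-off splitting applied to both components, the commutator estimate of Lemma~\ref{CEst} for the kinetic terms, and annulus estimates in the spirit of \eqref{Zinqu} for the self-interaction errors and for the cross term $\mathbf{F}_\mu^q[f_n]$, which the paper dismisses with ``an estimate similar to \eqref{Zinqu} holds'' and which you spell out explicitly. One small caveat, shared with the paper's own \eqref{Zinqu}: the portion of the nonlocal error where $x\in B_R(y_n)$ and $y\notin B_{2R}(y_n)$ carries weight $1$ rather than an annulus-supported weight, and is controlled not by the smallness of mass on $\mathbb{B}_R$ but by the kernel bound $|x-y|^{\beta-N}\leq R^{\beta-N}$ after taking $R$ large.
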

\begin{proof}
As in the proof of Lemma~\ref{revLem}, we choose $R\in \mathbb{R}$ and $n_0\in \mathbb{N}$ such that for all $n\geq n_0,$
\[
M-\varepsilon <P_n(R)\leq P_n(2R)<M+\varepsilon.
\]
Define the sequences $\{(v_1^n, v_2^n)\}$ and $\{(w_1^n, w_2^n)\}$ as follows
\[
(v_1^n, v_2^n)=\phi_R(x-y_n)f_n(x),\ \ (w_1^n, w_2^n)=\psi_R(x-y_n)f_n(x),\ x\in \mathbb{R}^N,
\]
where $\phi$ and $\psi$ are as in the proof of Lemma~\ref{revLem} and $y_n\in \mathbb{R}^N$ is chosen so that \eqref{GeqLem} holds with $\rho_n$
replaced by $\sigma_n$ and $L$ replaced by $M.$ Since the
sequences $\{v_1^n\}_{n\geq 1}$, $\{v_2^n\}_{n\geq 1}$, $\{w_1^n\}_{n\geq 1}$, and $\{w_2^n\}_{n\geq 1}$ are all bounded in $L^2(\mathbb{R}^N),$
so there exist $\sigma_1^\prime\in [0, \sigma_1]$ and $\sigma_2^{\prime}\in [0, \sigma_2]$ such that
up to a subsequence $\|v_1^n\|_{L^2}^2\to \sigma_1^\prime$ and $\|v_2^n\|_{L^2}^2\to \sigma_2^\prime$, whence one also has
$\|w_1^n\|_{L^2}^2\to \sigma_1-\sigma_1^\prime$ and $\|w_2^n\|_{L^2}^2\to \sigma_2-\sigma_2^\prime.$
Then, in view of \eqref{GeqLem}, Statements 1 and 2 of Lemma~\ref{CrevLem} follow.

\smallskip

Analogue of the inequality \eqref{CEst2} holds for $\widetilde{\phi}_Rf_1^n,$ $\widetilde{\phi}_Rf_2^n,$ $\widetilde{\psi}_Rf_1^n,$ and $\widetilde{\psi}_Rf_2^n.$
The inequality \eqref{CEst14} takes the form
\begin{equation}\label{CEst19}
E\left(v_n\right) +E\left(w_n\right)\leq
E\left(f_n\right) +\sum_{j=1}^2 \mathbf{F}_{\mu_j}^{p_j}[f_j^n]+\mathbf{F}_\mu^q[f_n] +C\varepsilon,
\end{equation}
where $\mathbf{F}_c^r[f]$ is as defined in the proof of Lemma~\ref{revLem}.
An estimate similar to \eqref{Zinqu} holds in the present context as well. Then, Statement 3 follows from \eqref{CEst19}.
\end{proof}

\begin{lem}\label{ECrev}
For every $\sigma\in \mathbb{R}_{>0}^2,$ there exists the numbers $\sigma_1^\prime\in [0, \sigma_1]$ and $\sigma_2^\prime\in [0, \sigma_2]$
such that $M=\sigma_1^\prime+\sigma_2^\prime$ and
\begin{equation}\label{ECrev71}
E(\sigma)\geq E(\sigma^\prime)+E(\sigma-\sigma^\prime).
\end{equation}
\end{lem}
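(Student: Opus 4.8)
The plan is to follow the scalar proof of Lemma~\ref{revInsubadd}, combining the localized splitting of Lemma~\ref{CrevLem} with a component-wise renormalization. Fix $\sigma\in\mathbb{R}_{>0}^2$, let $\{f_n\}_{n\geq1}$ be a minimizing sequence for $E(\sigma)$, and let $M$ be as in \eqref{limitDEF}. Applying Lemma~\ref{CrevLem} with $\varepsilon=1/k$ for $k=1,2,\dots$ and performing a diagonal extraction in $k$, we obtain a subsequence (not relabeled) together with sequences $v^n=(v_1^n,v_2^n)$ and $w^n=(w_1^n,w_2^n)$ of the form $v^n=\widetilde{\phi}_R f_n$ and $w^n=\widetilde{\psi}_R f_n$ (so that $|v_j^n|^2+|w_j^n|^2=|f_j^n|^2$ pointwise, and hence $\|v_j^n\|_{L^2}^2+\|w_j^n\|_{L^2}^2=\|f_j^n\|_{L^2}^2$), such that $\|v^n\|_{L^2}^2\to M$, $\|w^n\|_{L^2}^2\to(\sigma_1+\sigma_2)-M$, and $E(f_n)\geq E(v^n)+E(w^n)-\varepsilon_n$ with $\varepsilon_n\to0$. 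The cut-off pieces $\{v^n\}$ and $\{w^n\}$ remain bounded in $Y^\alpha(\mathbb{R}^N)$, by the estimate \eqref{CEst2} and the boundedness of $\{f_n\}$.

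Since $\|v_1^n\|_{L^2}^2$ and $\|v_2^n\|_{L^2}^2$ are bounded, after a further extraction $\|v_1^n\|_{L^2}^2\to\sigma_1'\in[0,\sigma_1]$ and $\|v_2^n\|_{L^2}^2\to\sigma_2'\in[0,\sigma_2]$; then $\sigma_1'+\sigma_2'=\lim\|v^n\|_{L^2}^2=M$, and $\|w_j^n\|_{L^2}^2=\|f_j^n\|_{L^2}^2-\|v_j^n\|_{L^2}^2\to\sigma_j-\sigma_j'$. Put $\sigma'=(\sigma_1',\sigma_2')$; this is the pair asserted by the lemma, and $M=\sigma_1'+\sigma_2'$ as required.

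It remains to show $\liminf_{n}E(v^n)\geq E(\sigma')$ and $\liminf_{n}E(w^n)\geq E(\sigma-\sigma')$, after which \eqref{ECrev71} follows from Statement~3 of Lemma~\ref{CrevLem} and the superadditivity of $\liminf$:
\[
E(\sigma)=\lim_{n\to\infty}E(f_n)\geq\liminf_{n\to\infty}E(v^n)+\liminf_{n\to\infty}E(w^n)\geq E(\sigma')+E(\sigma-\sigma'),
\]
with the convention $E(0,0)=0$ in the degenerate case $\sigma-\sigma'=(0,0)$. For the first inequality, renormalize: for $j$ with $\sigma_j'>0$ set $\widehat{v}_j^n=(\sigma_j'/\|v_j^n\|_{L^2}^2)^{1/2}v_j^n$, and for $j$ with $\sigma_j'=0$ set $\widehat{v}_j^n=0$; then $\widehat{v}^n\in\Sigma_{\sigma'}$, so $E(\sigma')\leq E(\widehat{v}^n)$. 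On the components with $\sigma_j'>0$ the scaling factor tends to $1$, and since $\{v^n\}$ is bounded in $Y^\alpha(\mathbb{R}^N)$, the corresponding change in each (quadratic or $2p_j$-homogeneous) term of $E$ is $o(1)$. On a component with $\sigma_j'=0$ we discard a nonnegative kinetic term and the Coulomb contributions $\mathbb{D}_{p_j}(v_j^n)$ and $\mathbb{D}_q(v_1^n,v_2^n)$, all of which tend to $0$ by the fractional Gagliardo--Nirenberg inequality (cf.\ \eqref{Mbou3} and \eqref{crossC2}), because $\|v_j^n\|_{L^2}\to0$ while $\|v_j^n\|_{H^\alpha}$ stays bounded. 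Hence $E(\widehat{v}^n)\leq E(v^n)+o(1)$, so $\liminf_nE(v^n)\geq E(\sigma')$; the identical argument applied to $w^n$ gives $\liminf_nE(w^n)\geq E(\sigma-\sigma')$.

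The only genuine difficulty is this renormalization estimate, and specifically the subcase where one of the limiting masses $\sigma_j'$ or $\sigma_j-\sigma_j'$ vanishes: there the naive rescaling is undefined, one must instead zero out that component, and the leftover Coulomb terms must be shown to vanish — which is exactly where the subcriticality hypotheses \eqref{Cfhypo1} enter. Everything else (the diagonal extraction, the $Y^\alpha$-boundedness of the cut-off pieces, the pointwise identity $\widetilde{\phi}_R^2+\widetilde{\psi}_R^2\equiv1$) is routine and parallels the scalar argument.
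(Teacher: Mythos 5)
Your proposal is correct and follows essentially the same route as the paper: a diagonal extraction from Lemma~\ref{CrevLem}, identification of the limiting masses $\sigma_1^\prime,\sigma_2^\prime$ with $M=\sigma_1^\prime+\sigma_2^\prime$, and a component-wise renormalization (rescaling when $\sigma_j^\prime>0$, discarding the component when $\sigma_j^\prime=0$ after showing its Coulomb contributions vanish) to get $\liminf_n E(v^n)\geq E(\sigma^\prime)$ and $\liminf_n E(w^n)\geq E(\sigma-\sigma^\prime)$. The only cosmetic differences are that you use superadditivity of $\liminf$ where the paper extracts convergent values $\Lambda_1,\Lambda_2$, and you zero out the vanishing component where the paper simply keeps its nonnegative kinetic term.
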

\begin{proof}
With the notation of Lemma~\ref{CrevLem} and its proof,
for any $\varepsilon>0,$ since $\{E(v_n)\}_{n\geq 1}$ and $\{E(w_n)\}_{n\geq 1}$ are bounded, we can assume that $E(v_n)\to \Lambda_1$ and $E(w_n)\to \Lambda_2.$ Statement 3 of Lemma~\ref{CrevLem} then implies that
\[
\Lambda_1 +\Lambda_2 \leq E(\sigma)+C\epsilon.
\]
Next, for every $m\in \mathbb{N},$ choose the sequences $\{\left(v_1^{n,m},v_2^{n,m}\right)\}$ and $\{\left(w_1^{n,m},w_2^{n,m}\right)\}$ in
$Y^\alpha(\mathbb{R}^N)$ such that for $j=1,2,$
\[
\begin{aligned}
& \|v_j^{n,m}\|_{L^2}^2\to \sigma_j^\prime(m),\ E\left(v_1^{n,m},v_2^{n,m}\right) = \Lambda_1(m),\\
& \|w_j^{n,m}\|_{L^2}^2\to \sigma_j-\sigma_j^\prime(m),\ \mathrm{and}\  E\left(w_1^{n,m},w_2^{n,m}\right) = \Lambda_2(m),
\end{aligned}
\]
where $\sigma_1^\prime(m) \in [0,\sigma_1], \ \sigma_2^\prime(m) \in [0,\sigma_2],$
\begin{equation}\label{Limit1m}
|\sigma_1^\prime(m)+\sigma_2^\prime(m)-M| \leq \epsilon,\ \ \textrm{and}\ \ \Lambda_1(m) +\Lambda_2(m) \leq E(\sigma)+\frac{1}{m}.
\end{equation}
We may further extract a subsequence and assume that
$\sigma_1^\prime(m) \to \sigma_1^\prime \in [0,\sigma_1], \sigma_2^\prime(m) \to \sigma_2^\prime \in [0,\sigma_2], \Lambda_1(m) \to \Lambda_1,$
and $\Lambda_2(m)\to \Lambda_2.$ Moreover, after redefining
$v_n$ and $w_n$ to be diagonal entries
$v_n=\left(v_1^{n,n},v_2^{n,n}\right)$ and $w_n=\left(w_1^{n,n},w_2^{n,n}\right),$
we can say that $\|v_j^n\|_{L^2}^2\to \sigma_j^\prime$, $\|w_j^n\|_{L^2}^2\to \sigma_j-\sigma_j^\prime,$
$E\left(v_n\right)\to \Lambda_1,$ and $E(w_n)\to \Lambda_2.$
Now, letting $m$ tend to infinity on both sides of the first inequality in \eqref{Limit1m}, we deduce that $M=\sigma_1^\prime+\sigma_2^\prime.$
Next, we claim that
\begin{equation}\label{GeqInq57}
\Lambda_1 \geq E(\sigma^\prime)\ \ \textrm{and}\ \ \Lambda_2 \geq E(\sigma-\sigma^\prime).
\end{equation}
To see $\Lambda_1 \geq E(\sigma^\prime),$ suppose first that $\sigma_1^\prime>0$ and $\sigma_2^\prime>0.$
Define
\[
\beta_1^n=\frac{(\sigma_1^\prime)^{1/2}}{\|v_1^n\|_{L^2}}\ \ \textrm{and}\ \ \beta_2^n=\frac{(\sigma_2^\prime)^{1/2}}{\|v_2^n\|_{L^2}}.
\]
Then, we get $E\left(\beta_1^n v_1^n,\beta_2^n v_2^n\right)\geq E(\sigma^\prime).$ Since $\beta_1^n \to 1$ and $\beta_2^n \to 1$ as $n\to \infty,$
one has that $E\left(\beta_1^n v_1^n,\beta_2^n v_2^n\right)\to \Lambda_1$ and hence, $\Lambda_1 \geq E(\sigma^\prime).$
Suppose now that $\sigma_1^\prime=0$ and $\sigma_2^\prime>0$ (the same argument applies for the case $\sigma_1^\prime>0$ and $\sigma_2^\prime=0$).
Then, using the Hardy-Littlewood-Sobolev and
interpolation inequalities, one can see that $\mathbb{D}_{p_1}(v_1^n)\to 0$ and $\mathbb{D}_{q}(v_1^n, v_2^n)\to 0$ as $n\to \infty.$
Consequently, we get
\begin{equation*}
\Lambda_1  =\lim_{n\to \infty}E\left(v_n\right)
  = \lim_{n\to \infty}\left( \mathbb{F}_2\left(v_2^n\right)+ \int_{\mathbb{R}^N}|(-\Delta)^{\alpha/2}v_1^n|^2\ dx\right) \geq E(0,\sigma_2^\prime).
\end{equation*}
This finishes the proof of $\Lambda_1 \geq E(\sigma^\prime).$
The proof of $\Lambda_2 \geq E(\sigma-\sigma^\prime)$ uses the same arguments and so will not be repeated here.
Finally, with \eqref{GeqInq57} in hand, \eqref{ECrev71} follows from the second inequality of \eqref{Limit1m}.
\end{proof}

Analogue of Lemma~\ref{singleDi} is the following.

\begin{lem}
For any minimizing sequence $\{f_n\}_{n\geq 1}$ of $E(\sigma),$ let $M$ be defined by \eqref{limitDEF}.
Then $M$ satisfies $M\not\in (0, \sigma_1+\sigma_2).$
\end{lem}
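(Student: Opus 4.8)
The plan is to follow the blueprint of the scalar Lemma~\ref{singleDi}. Suppose, for contradiction, that $M\in(0,\sigma_1+\sigma_2)$. By Lemma~\ref{ECrev} we may fix $\sigma'_1\in[0,\sigma_1]$ and $\sigma'_2\in[0,\sigma_2]$ with $M=\sigma'_1+\sigma'_2$ and
\[
E(\sigma)\ \ge\ E(\sigma')+E(\sigma-\sigma'),\qquad \sigma'=(\sigma'_1,\sigma'_2).
\]
Since $0<M<\sigma_1+\sigma_2$, both $\sigma'$ and $\sigma'':=\sigma-\sigma'$ are nonzero and have at most one vanishing coordinate. Hence it suffices to prove the strict subadditivity $E(\sigma)<E(\sigma')+E(\sigma'')$ for every such admissible nontrivial splitting, which contradicts the inequality above.

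For the strict subadditivity I would mimic Lemma~\ref{singleDi}. Let $\{z_n\}$ and $\{w_n\}$ be minimizing sequences for $E(\sigma')$ and $E(\sigma'')$, set
\[
K_1^n=\frac{E(z_n)}{\|z_1^n\|_{L^2}^2+\|z_2^n\|_{L^2}^2},\qquad K_2^n=\frac{E(w_n)}{\|w_1^n\|_{L^2}^2+\|w_2^n\|_{L^2}^2},
\]
pass to a subsequence so that $K_i^n\to K_i$, and distinguish $K_1<K_2$, $K_1>K_2$, $K_1=K_2$. The ingredient that is genuinely new relative to the scalar proof is that one cannot push $\{z_n\}$ onto $\Sigma_\sigma$ by a single scalar multiple; instead one rescales the two components separately, $\zeta_n=\bigl((\sigma_1/\sigma'_1)^{1/2}z_1^n,(\sigma_2/\sigma'_2)^{1/2}z_2^n\bigr)$, so that $\zeta_n$ has powers converging to $(\sigma_1,\sigma_2)$ (and symmetrically with $\{w_n\}$). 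Writing $t_j=\sigma_j/\sigma'_j\ge1$ and using $t_j^{p_j}\ge t_j$ and $(t_1t_2)^{q/2}\ge t_1t_2\ge(t_1-1)+(t_2-1)+1$ (valid since $p_j\ge2$, $q\ge2$, $t_j\ge1$), together with Lemma~\ref{LMnegL} applied to $\{z_n\}$, which gives $\mathbb{F}_j(z_j^n)-\mu\mathbb{D}_q(z_n)\le-\delta_j$, one obtains
\[
E(\zeta_n)\ \le\ E(z_n)-(t_1-1)\delta_1-(t_2-1)\delta_2 .
\]
Because $\sigma'\neq\sigma$ forces some $t_j>1$, this yields a uniform strict gain; combining it with the case inequality when $K_1<K_2$, and with the scaling inequality $E(T\tau)<TE(\tau)$ for $T>1$ (a consequence of the analogue of Lemma~\ref{PoLem1} and the lower bound $\mathbb{D}_q(z_n)\ge\delta$ along minimizing sequences, itself obtained by comparing $E(\sigma')$ with the fully decoupled energy through positive scalar minimizers) in the case $K_1=K_2$, one concludes along the lines of Lemma~\ref{singleDi}; the case $K_1>K_2$ is symmetric. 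When $\sigma'$ (or $\sigma''$) has a vanishing coordinate, say $\sigma'=(0,\sigma'_2)$, the value $E(\sigma')$ reduces to the scalar minimization $\inf\{\mathbb{F}_2(f):\|f\|_{L^2}^2=\sigma'_2\}$ covered by Theorem~\ref{Mainpower}; one uses its strictly positive minimizer and the positivity of $\mathbb{D}_q$ on pairs of positive functions to produce the required strict gain, precisely as in the proof that $E(\sigma)<E(\sigma_1,0)+E(0,\sigma_2)$, and the fully separated splitting $\sigma'=(\sigma_1,0)$, $\sigma''=(0,\sigma_2)$ is exactly that inequality.

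The hard part is the component-wise rescaling. In the scalar setting the power is one number, so a minimizing sequence of $J(\sigma')$ reaches $\Sigma_{\sigma'+\sigma''}$ upon multiplication by a single constant, for which $J(T^{1/2}f)\le TJ(f)$ holds transparently; in the two-constraint problem one must scale the components by different factors $t_1,t_2$, and the coupling term scales like $(t_1t_2)^{q/2}$, which does not recombine with the one-component energies as cleanly. Extracting a strict, uniform energy decrease out of this rescaling is where the sharpened lower bounds of Lemma~\ref{LMnegL} — that each reduced functional $\mathbb{F}_j-\mu\mathbb{D}_q$ stays strictly negative along minimizing sequences of $E(\sigma')$ — carry the weight; the rest is a two-parameter transcription of the scalar argument plus the boundary bookkeeping above.
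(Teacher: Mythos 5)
Your overall skeleton (reduce to strict subadditivity over all admissible splittings via Lemma~\ref{ECrev}, treat boundary splittings through the scalar theory and the positivity of $\mathbb{D}_q$) is the same as the paper's, and your rescaling estimate is correct as far as it goes: with $\zeta_n=(t_1^{1/2}z_1^n,t_2^{1/2}z_2^n)$ one indeed gets $E(\zeta_n)\le E(z_n)+(t_1-1)\bigl(\mathbb{F}_1(z_1^n)-\mu\mathbb{D}_q(z_n)\bigr)+(t_2-1)\bigl(\mathbb{F}_2(z_2^n)-\mu\mathbb{D}_q(z_n)\bigr)\le E(z_n)-(t_1-1)\delta_1-(t_2-1)\delta_2$ by Lemma~\ref{LMnegL}. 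But this is where the argument stops short: passing to the limit it only yields $E(\sigma)\le E(\sigma')-(t_1-1)\delta_1-(t_2-1)\delta_2$, i.e.\ strict monotonicity of $\sigma\mapsto E(\sigma)$, and to contradict $E(\sigma)\ge E(\sigma')+E(\sigma'')$ you would need $(t_1-1)\delta_1+(t_2-1)\delta_2>|E(\sigma'')|$, which nothing provides: the $\delta_j$ of Lemma~\ref{LMnegL} have no quantitative relation to $E(\sigma'')$. Nor can the scalar bookkeeping be rescued by your aggregate ratios $K_i=E/(\text{total mass})$: in the scalar proof the single-factor homogeneity gave $J(\sigma)\le TJ(\sigma')=J(\sigma')+\sigma''K_1$, but here the dilation is non-uniform ($t_1\ne t_2$ in general), the extra term is $\sigma_1''\,a_1+\sigma_2''\,a_2$ with $a_j$ the \emph{per-component} per-unit-mass energies of the $z$-sequence, and beating $E(\sigma'')$ requires componentwise comparisons, not the comparison of the two totals $K_1,K_2$. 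Indeed it can happen that the $z$-configuration is more efficient for the first component while the $w$-configuration is more efficient for the second; then neither wholesale transplant (all mass into the $z$-shape or all into the $w$-shape) produces the strict inequality, yet your trichotomy $K_1<K_2$, $K_1>K_2$, $K_1=K_2$ would still instruct you to perform one of them. The same objection applies to invoking $E(T\tau)<TE(\tau)$ in the tie case: a single scalar $T$ cannot move $(\sigma_1',\sigma_2')$ to $(\sigma_1,\sigma_2)$ unless $\sigma_1''/\sigma_1'=\sigma_2''/\sigma_2'$.

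The paper closes exactly this hole by comparing per-unit-mass energies \emph{component by component} (the quantities $e_1,e_2$ built from the first components including the coupling term, and $d_1,d_2$ from the second components) and by using a mixed test configuration: only the first component of the better sequence is rescaled to carry the full mass $\sigma_1'+\sigma_1''$, while the two second components are kept and juxtaposed with disjoint, far-separated supports so that their masses and energies simply add and the dropped cross-coupling only helps the upper bound; the tie case $e_1=e_2$ is then settled by also scaling the second component and extracting the strict gain from the analogue of Lemma~\ref{PoLem1}. Your treatment of the boundary splittings is closer to the mark (the fully decoupled splitting is precisely $E(\sigma)<E(\sigma_1,0)+E(0,\sigma_2)$, proved with the scalar minimizers and $\mathbb{D}_q>0$), but note that in the mixed case $\sigma'\in\mathbb{R}_{>0}^2$, $\sigma''=(0,\sigma_2'')$ you may not use a minimizer of the coupled problem $E(\sigma')$ -- its existence is what is being proved -- so there too the argument must run along minimizing sequences with a componentwise comparison, as in the paper's Case~2.
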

\begin{proof}
To rule out the dichotomy, suppose to the contrary that
$M\in(0,\sigma_1+\sigma_2).$ Let $\sigma^\prime$ be as in Lemma~\ref{ECrev} and
denote $\sigma^{\prime \prime}=\sigma-\sigma^\prime.$ Lemma~\ref{ECrev} implies
that $E(\sigma)\geq E(\sigma^\prime)+E(\sigma-\sigma^\prime),$ which is same as
\begin{equation}\label{revInEq}
E(\sigma^\prime+\sigma^{\prime \prime})\geq E(\sigma^\prime)+E(\sigma^{\prime \prime}).
\end{equation}
We now deduce a contradiction.
Since $\sigma^\prime, \sigma^{\prime \prime}\in \mathbb{R}_{\geq 0}^2$ with $\sigma^\prime, \sigma^{\prime \prime}\neq \{\mathbf{0}\}$ and
$\sigma^\prime+\sigma^{\prime \prime}\in \mathbb{R}_{> 0}^2,$ we consider the following cases:
(i) $\sigma^\prime, \sigma^{\prime \prime}\in \mathbb{R}_{>0}^2 ;$ (ii) $\sigma^\prime \in \mathbb{R}_{>0}^2$ and $\sigma^{\prime \prime}\in \{0\}\times \mathbb{R}_{>0}$; (iii) $\sigma^\prime \in \mathbb{R}_{>0}\times \{0\} $ and $\sigma^{\prime \prime}\in \mathbb{R}_{>0}^2;$ and
 (iv) $\sigma^\prime \in \mathbb{R}_{>0}\times \{0\}$ and $\sigma^{\prime \prime}\in \{0\}\times \mathbb{R}_{>0}.$
 All other cases coincide with one of these cases after switching the roles of $\sigma^\prime$ and $\sigma^{\prime \prime}.$ We consider each case separately.

\smallskip

\textbf{Case 1.} $\sigma^\prime, \sigma^{\prime \prime}\in \mathbb{R}_{>0}^2.$ Let $\{(z_1^n, z_2^n)\}_{n\geq 1}$ and $\{(w_1^n, w_2^n)\}_{n\geq 1}$
be sequences in $Y^\alpha(\mathbb{R}^N)$ such that
\[
\begin{aligned}
& \|z_1^n\|_{L^2}^2\to {\sigma_1}^\prime,\ \|z_2^n\|_{L^2}^2\to {\sigma_2}^{\prime},\ E(z_1^n, z_2^n)\to E(\sigma^\prime), \\
& \|w_1^n\|_{L^2}^2\to {\sigma_1}^{\prime \prime},\ \|w_2^n\|_{L^2}^2\to {\sigma_2}^{\prime \prime},\ E(w_1^n, w_2^n)\to E(\sigma^{\prime \prime}).
\end{aligned}
\]
To deduce a contradiction in this case, consider the sequences $\{(e_1^n, e_2^n)\}$ and $\{(d_1^n, d_2^n)\}$ in $\mathbb{R}^2$ defined as follows
\[
\begin{aligned}
& e_1^n=\frac{1}{ \|z_1^n\|_{L^2}^2}\left(\mathbb{F}_1(z_1^n)-\mu \iint_{\mathbb{R}^N}D_q(|z_1^n|, |z_2^n|) \ dxdy  \right),\ d_1^n=\frac{1}{\|z_2^n\|_{L^2}^2 } \mathbb{F}_1(z_2^n), \\
& e_2^n=\frac{1}{ \|w_1^n\|_{L^2}^2}\left(\mathbb{F}_2(w_1^n)-\mu \iint_{\mathbb{R}^N}D_q(|w_1^n|||w_2^n|)\ dxdy  \right),\ d_2^n=\frac{1}{\|w_2^n\|_{L^2}^2 } \mathbb{F}_2(w_2^n).
\end{aligned}
\]
Then, we can assume that $(e_1^n, e_2^n)\to (e_1, e_2)$ in $\mathbb{R}^2$
and $(d_1^n, d_2^n)\to (d_1, d_2)$ in $\mathbb{R}^2.$ As in the proof of \eqref{subSING},
three cases may arise: $e_1<e_2, e_1>e_2,$ and $e_1=e_2.$ Suppose that $e_1<e_2.$
Without loss of generality, we may assume that $z_1^n, z_2^n, w_1^n,$ and $w_2^n$ are $\mathbb{R}$-valued, non-negative, and have compact supports. Let $\nu$ be a unit vector in $\mathbb{R}^N$ and define the function $f_2^n$ as follows
\[
\widetilde{z}_2^n(\cdot)=z_2^n(\cdot - b_n \nu),\ f_2^n=\widetilde{z}_2^n+w_2^n,
\]
where $b_n$ is such that $\widetilde{z}_2^n$ and $w_2^n$ have disjoint supports.
Then $\|f_2^n\|_{L^2}^2\to \sigma_2^{\prime }+\sigma_2^{\prime \prime}.$ Define $\{f_1^n\}\subset H^\alpha(\mathbb{R}^N)$ as $f_1^n=T z_1^n,$ where
$T=1+\sigma_1^{\prime \prime}/\sigma_1^{\prime}$ and put $F_n=(f_1^n, f_2^n).$ Then, we have that
\begin{equation}\label{CCFcase}
E(\sigma^\prime+\sigma^{\prime \prime})\leq \lim_{n\to \infty}E(F_n).
\end{equation}
Now, since $\mu>0$ and $q-2>0,$ using the same argument as in the proof of Lemma~\ref{singleDi}, we deduce that
\[
\begin{aligned}
\mathbb{F}_1(f_1^n)-\mu \mathbb{D}_q(F_n) & \leq \mathbb{F}_1(f_1^n)-\mu \iint_{\mathbb{R}^N} D_q(|f_1^n|, |\widetilde{z}_2^n|)\ dxdy\\
& \leq T \left(\mathbb{F}_1(z_1^n)- \mu\iint_{\mathbb{R}^N} D_q(|z_1^n|,|\widetilde{z}_2^n|)\ dxdy\right).
\end{aligned}
\]
Letting $n$ tend to $+\infty$ on both sides of this last inequality, we get
\begin{equation}\label{CCFcase3}
\lim_{n\to \infty}\left( \mathbb{F}_1(f_1^n)-\mu \mathbb{D}_q(F_n) \right) \leq T \sigma_1^{\prime}e_1=\sigma_1^\prime e_1+\sigma_1^{\prime \prime}e_1.
\end{equation}
Using \eqref{CCFcase3} and the assumption $e_1<e_2,$ we see from \eqref{CCFcase} that
\[
\begin{aligned}
E(\sigma^\prime+\sigma^{\prime \prime})& \leq \sigma_1^\prime e_1+\sigma_1^{\prime\prime }e_1+d_1\sigma_2^{\prime}+d_2\sigma_2^{\prime \prime}\\
& = E(\sigma^\prime)+\sigma_1^{\prime\prime }e_1+d_2\sigma_2^{\prime \prime}< E(\sigma^\prime)+E(\sigma^{\prime \prime}),
\end{aligned}
\]
which contradicts \eqref{revInEq}. The proof in the case $e_1>e_2$ uses the same argument and so will not be repeated. Suppose now that $e_1=e_2.$ Two subcases may arise: $d_1\leq d_2$ or $d_1\geq d_2.$ Since both subcases use the same arguments, we
only consider the subcase $d_1\leq d_2.$ Let $f_1^n$ be as in the preceding paragraph and define $f_2^n$ by $f_2^n=S z_2^n,$ where
$S=1+\sigma_2^{\prime \prime}/\sigma_2^{\prime}.$ Then, using the second part of Lemma~\ref{PoLem1}, we can find $\delta>0$ such that
\begin{equation}\label{extraIn}
\mathbb{F}_2(f_2^n)\leq S \mathbb{F}_2(z_2^n)-\delta.
\end{equation}
Using the same argument as in the proof of Lemma~\ref{singleDi}, we also have
\begin{equation}\label{extraIn2}
\mathbb{F}_1(f_1^n)-\mu \mathbb{D}_q(f_1^n, f_2^n) \leq T \left(\mathbb{F}_1(z_1^n)-\mu \iint_{\mathbb{R}^N} D_q(|z_1^n|, |z_2^n|)\ dxdy \right).
\end{equation}
Now, since $\|f_1^n\|_{L^2}^n\to \sigma_1^\prime + \sigma_1^{\prime \prime}$ and $\|f_2^n\|_{L^2}^n\to \sigma_2^\prime + \sigma_2^{\prime \prime},$
as a consequence of inequalities \eqref{extraIn} and \eqref{extraIn2}, one obtains that
\[
\begin{aligned}
E(\sigma^\prime+\sigma^{\prime \prime})& \leq \lim_{n\to \infty}E(f_1^n, f_2^n)\leq T e_1\sigma_1^\prime+S \sigma_2^\prime d_1-\delta \\
& = e_1\sigma_1^\prime + \sigma_2^\prime d_1+\frac{\sigma_1^{\prime \prime} } { \sigma_1^{\prime}}e_1\sigma_1^\prime+\frac{\sigma_2^{\prime \prime} } { \sigma_2^{\prime}}\sigma_2^\prime d_1-\delta
 = E(\sigma^\prime)+\sigma_1^{\prime \prime}e_1+\sigma_2^{\prime \prime}d_1-\delta.
\end{aligned}
\]
Using $e_1=e_2$ and $d_1\leq d_2,$ this last inequality implies that
\[
E(\sigma^\prime+\sigma^{\prime \prime})\leq E(\sigma^\prime)+E(\sigma^{\prime \prime})-\delta,
\]
which gives, $E(\sigma^\prime+\sigma^{\prime \prime})< E(\sigma^\prime)+ E(\sigma^{\prime \prime}),$ which again
contradicts \eqref{revInEq}.

\smallskip

\textbf{Case 2.} $\sigma^\prime\in \mathbb{R}_{>0}^2$
and $\sigma^{\prime \prime}\in \{0\}\times \mathbb{R}_{>0}.$
Let $\{(z_1^n, z_2^n)\}$ and $\{(w_1^n, w_2^n)\}$ be sequences in $Y^\alpha(\mathbb{R}^N)$ such that
\[
\begin{aligned}
& \|z_1^n\|_{L^2}^2\to \sigma_1^\prime,\ \|z_2^n\|_{L^2}^2\to \sigma_2^\prime,\ E(z_1^n, z_2^n)\to E(\sigma^\prime),\\
& \|w_1^n\|_{L^2}^2\to 0,\ \|w_2^n\|_{L^2}^2\to \sigma_2^{\prime \prime},\ E(w_1^n, w_2^n)\to E(\sigma^{\prime \prime}).
\end{aligned}
\]
Define $\{(K_1^n, K_2^n)\}_{n\geq 1}\subset \mathbb{R}^2$ as follows
\[
\begin{aligned}
& K_1^n=\frac{1}{\|w_2^n\|_{L^2}^2}\left( \mathbb{F}_2(w_2^n)-\mu \iint_{\mathbb{R}^N} D_q(|z_1^n|, |w_2^n|)\ dxdy \right),\\
& K_2^n=\frac{1}{\|z_2^n\|_{L^2}^2}\left( \mathbb{F}_2(z_2^n)-\mu \iint_{\mathbb{R}^N} D_q(|z_1^n|, |z_2^n|)\ dxdy \right).
\end{aligned}
\]
Then, as before, we can assume that $(K_1^n, K_2^n)\to (K_1, K_2)$ in $\mathbb{R}^2.$ Three
cases may arise: $K_1<K_2, K_1>K_2,$ and $K_1=K_2.$ Suppose that $K_1<K_2.$ Let $f_2^n=B w_2^n,$ where $B=1+\sigma_2^\prime/\sigma_2^{\prime \prime}.$ Then
\begin{equation}\label{case3CC}
E(\sigma^\prime + \sigma^{\prime \prime})\leq \lim_{n\to \infty}E(z_1^n, f_2^n).
\end{equation}
Using the same argument as in the proof of Lemma~\ref{singleDi}, we can show that
\[
\mathbb{F}_2(f_2^n)-\mu \mathbb{D}_q(z_1^n, f_2^n)\leq B \left(\mathbb{F}_2(w_2^n)-\mu \iint_{\mathbb{R}^N} D_q(|z_1^n|, |w_2^n|)\ dxdy \right).
\]
Using this and the assumption $K_1<K_2$ into \eqref{case3CC}, one obtains that
\[
\begin{aligned}
E(\sigma^\prime + \sigma^{\prime \prime})& \leq \lim_{n\to \infty} \mathbb{F}_1(z_1^n)+B \lim_{n\to \infty} \left(\mathbb{F}_2(w_2^n)-\mu \iint_{\mathbb{R}^N}D_q(|z_1^n|, |w_2^n|)\ dxdy \right)\\
&= E(\sigma^{\prime \prime})+\lim_{n\to \infty} \mathbb{F}_1(z_1^n)+\sigma_1^\prime K_1< E(\sigma^{\prime \prime})+E(\sigma^{\prime}),
\end{aligned}
\]
which is a contradiction. The case $K_1>K_2$ also leads to a contradiction with the same argument. Finally, consider the case $K_1=K_2.$
Let $f_2^n=B w_2^n$ as above. Then we have that
\begin{equation}\label{Case3cC}
E(\sigma^\prime + \sigma^{\prime \prime})\leq \lim_{n\to \infty}E(z_1^n, f_2^n).
\end{equation}
Making use of Lemma~\ref{PoLem1} and using the same argument that we used in the case $e_1=e_2$ before, it follows that
\[
E(z_1^n, f_1^n)\leq \mathbb{F}_1(z_1^n)+B \mathbb{F}(w_2^n)-\mu B \iint_{\mathbb{R}^N} D_q(|z_1^n|, |w_2^n|)\ dxdy-\delta.
\]
Using this and $K_1=K_2$ into \eqref{Case3cC}, we obtain that
\[
\begin{aligned}
E(\sigma^\prime + \sigma^{\prime \prime}) \leq E(\sigma^{\prime \prime})+\lim_{n\to \infty}\mathbb{F}_1(z_1^n)+\frac{\sigma_2^\prime }{\sigma_2^{\prime \prime}}K_1 \sigma_2^{\prime \prime}-\delta
 =E(\sigma^{\prime \prime})+E(\sigma^{\prime }) -\delta,
\end{aligned}
\]
which gives $E(\sigma^\prime + \sigma^{\prime \prime})< E(\sigma^{\prime \prime})+E(\sigma^{\prime }),$ this contradicts \eqref{revInEq}.

\smallskip

\textbf{Case 3.} $\sigma^\prime \in \mathbb{R}_{>0}\times \{0\}$ and $\sigma^{\prime \prime}\in \mathbb{R}_{>0}^2.$ The proof in this case uses the same argument as in the Case 2 and so will not not be repeated here.

\smallskip

\textbf{Case 4.} $\sigma^\prime\in \mathbb{R}_{>0}\times \{0\}$ and $\sigma^{\prime \prime}\in \{0\}\times \mathbb{R}_{>0}.$ In this case, we have that
$E(\sigma^\prime)=J_1(\sigma_1^\prime)$ and $E(\sigma^{\prime \prime})=J_2(\sigma_2^{\prime \prime}),$ where
\[
J_1(\sigma_1^\prime)=\inf_{f\in \Sigma_{\sigma_1^\prime} }J(f)|_{(a,\lambda, p)=(0, \lambda_1, p_1)}\ \ \mathrm{and}\ \ J_2(\sigma_2^{\prime \prime})=\inf_{f\in \Sigma_{\sigma_2^{\prime \prime}} }J(f)|_{(a, \lambda, p)=(0, \lambda_2, p_2)}.
\]
For $\sigma_1^\prime>0$ and $\sigma_2^{\prime \prime}>0,$ let $u^\prime$ and $u^{\prime \prime}$ be such that
$J_1(\sigma_1^\prime)=J(u^\prime)|_{(a,\lambda, p)=(0, \lambda_1, p_1)}$ and $J_2(\sigma_2^{\prime \prime})=J(u^{\prime \prime})|_{(a,\lambda, p)=(0, \lambda_2, p_2)}.$ Then, it is obvious that $\mathbb{D}_q(u^\prime, u^{\prime \prime})>0$ and
\[
E(\sigma_1^{\prime}, \sigma_2^{\prime \prime})<J(u^\prime)|_{(a,\lambda, p)=(0, \lambda_1, p_1)}+J(u^{\prime \prime})|_{(a,\lambda, p)=(0, \lambda_2, p_2)}=J_1(\sigma_1^\prime)+J_2(\sigma_2^{\prime \prime}),
\]
which again contradicts \eqref{revInEq}.
\end{proof}
Analogue of Lemma~\ref{shiftLem} hold in
the present context without change of statement and an obvious
modification in the proof. Thus, all the preliminaries for the proof of Theorem~\ref{CHMain} and its corollary
have been established.
The proofs of Theorem~\ref{CHMain}, except Statements 3 and 4, and its Corollary are
now standard and
so will not be repeated here.

\smallskip

To prove Statement 3 of Lemma~\ref{CHMain}, let $(u_1,u_2)\in M_\sigma$ for any $\sigma\in \mathbb{R}_{>0}^2.$ Then there exists a pair $\omega=(\omega_1, \omega_2)\in \mathbb{R}^2$ such that $(\omega, u_1, u_2)$ solves
the Euler-Lagrange differential equations
\begin{equation}\label{CfODEp4}
\left\{
\begin{aligned}
  & (-\Delta)^\alpha u_1+\omega_1u_1=\lambda_1\left( \mathcal{K}_\beta\star |u_1|^{p_1}\right)|u_1|^{p_1-2}u_1 +c \left(\mathcal{K}_\beta\star |u_2|^{q}\right) |u_1|^{q-2}u_1,\\
   & (-\Delta)^\alpha u_2+\omega_2u_2=\lambda_2\left( \mathcal{K}_\beta\star |u_2|^{p_2}\right)|u_2|^{p_2-2}u_2 +c \left(\mathcal{K}_\beta\star |u_1|^{q}\right) |u_2|^{q-2}u_2,
\end{aligned}
\right.
\end{equation}
where $\mathcal{K}_\beta(x)=|x|^{\beta-N}$ for $x\in \mathbb{R}^N.$ The first equation of \eqref{CfODEp4} gives
\begin{equation}\label{CfODEp7}
\|(-\Delta)^{\alpha/2}u_1\|_{L^2}^2-\lambda_1\mathbb{D}_{p_1}(u_1)-c \mathbb{D}_{q}(u_1, u_2) = -\omega_1\|u_1\|_{L^2}^2
\end{equation}
Applying Lemma~\ref{LMnegL} for $(f_1^n, f_2^n)=(u_1,u_2),$ we have that
\[
\|(-\Delta)^{\alpha/2}u_1\|_{L^2}^2-\mu_1\mathbb{D}_{p_1}(u_1)-\mu \mathbb{D}_{q}(u_1, u_2)<0.
\]
Since $\lambda_1=2\mu_1 p_1>\mu_2$ and $c=\mu q>\mu,$ it follows from \eqref{CfODEp7} that $-\omega_1\sigma_1<0.$ This implies that $\omega_1>0.$ Similarly, we have that $\omega_2>0.$

\smallskip

The proofs that $(|u_1|, |u_2|)\in M_\sigma$ and $(|u_1|^\ast, |u_2|^\ast)\in M_\sigma$
whenever $u\in M_\sigma$ follow from the facts \eqref{symPO2}$-$\eqref{symPO2}.
To show that $|u_1|>0$ and $|u_2|>0$ on $\mathbb{R}^N,$ denote
$\widetilde{u}_1=|u_1|$ and $\widetilde{u}_2=|u_2|.$ Then, the function $(\widetilde{u}_1,\widetilde{u}_2)$ satisfies
the system of the form
\begin{equation}\label{CfODEp19}
\left\{
\begin{aligned}
  & (-\Delta)^\alpha \widetilde{u}_1+\omega_1\widetilde{u}_1=f_1(\widetilde{u}_1, \widetilde{u}_2)\ \ \mathrm{in}\ \mathbb{R}^N,\\
   & (-\Delta)^\alpha \widetilde{u}_2+\omega_2\widetilde{u}_2=f_2(\widetilde{u}_1, \widetilde{u}_2)\ \ \mathrm{in}\ \mathbb{R}^N,
\end{aligned}
\right.
\end{equation}
where $(\omega_1, \omega_2)$ is the same pair of numbers as in \eqref{CfODEp4}.
Since $\omega_1>0$ and $\omega_2>0,$ we have the convolution representation
\[
\left\{
\begin{aligned}
& \widetilde{u}_1=\mathcal{W}_{\omega_1}\star f_1(\widetilde{u}_1, \widetilde{u}_2)=\int_{\mathbb{R}^N}\mathcal{W}_{\omega_1}(x-x^\prime)f_1(\widetilde{u}_1, \widetilde{u}_2)(x^\prime)\ dx^\prime,\\
&\widetilde{u}_2=\mathcal{W}_{\omega_2}\star f_2(\widetilde{u}_1, \widetilde{u}_2)=\int_{\mathbb{R}^N}\mathcal{W}_{\omega_2}(x-x^\prime)f_2(\widetilde{u}_1, \widetilde{u}_2)(x^\prime)\ dx^\prime,
\end{aligned}
\right.
\]
where $\mathcal{W}_\tau(x)$ is as defined in \eqref{lagra11}.
Since the functions $f_1, f_2$ are everywhere nonnegative and not identically zero, it
follows that $\widetilde{u}_1>0$ and $\widetilde{u}_2>0$ on $\mathbb{R}^N.$ This concludes the proof of Statement 4.

\end{document}